\theoremstyle{plain}
\newtheorem{thm}{Theorem}
\newtheorem{lem}{Lemma}
\newtheorem{prop}{Proposition}
\newtheorem{cor}{Corollary}
\newtheorem{conj}{Conjecture}
\newtheorem{ques}{Question}
\newtheorem{subtheorem}{Theorem}
\theoremstyle{remark}
\newtheorem*{claim*}{Claim}
\theoremstyle{definition}
\newcommand{\Z}{\mathbb{Z}}
\newcommand{\R}{\mathbb{R}}
\newcommand{\C}{\mathbb{C}}
\newcommand{\eps}{\varepsilon}
\renewcommand{\phi}{\varphi}
\DeclareMathOperator{\supp}{supp}
\newcommand{\todo}[1]{{\hl{TODO: #1}}}
\newcommand{\sqnorm}[1]{\norm{#1}^2}
\newcommand{\inner}[2]{\langle {#1} , {#2}\rangle}
\DeclareMathOperator{\im}{Im}
\DeclareMathOperator{\id}{id}
\DeclareMathOperator{\rank}{rank}
\DeclareMathOperator{\length}{length}
\DeclareMathOperator{\Ker}{Ker}
\DeclareMathOperator{\GL}{GL}
\DeclareMathOperator{\PSL}{PSL}
\DeclareMathOperator{\tors}{tors}
\DeclareMathOperator{\Int}{Int}
\DeclareMathOperator{\inj}{inj}
\newcommand{\norm}[1]{\left\|#1\right\|}
\newcommand{\cont}{\textnormal{cont}}
\newcommand{\expnd}{\textnormal{exp}}
\begin{document}
\title[Hyperbolic 3-manifolds with spectral gap for coclosed 1-forms]{Hyperbolic 3-manifolds with uniform spectral gap for coclosed 1-forms}
\author[Abdurrahman]{Amina Abdurrahman}
\email{amina@ihes.fr}
\author[Adve]{Anshul Adve} 
\email{aadve@princeton.edu}
\author[Giri]{Vikram Giri}
\email{vikramaditya.giri@math.ethz.ch}
\author[Lowe]{Ben Lowe}
\email{loweb24@uchicago.edu}
\author[Zung]{Jonathan Zung}
\email{jzung@mit.edu}

\maketitle

{\centering \it Dedicated to Nicolas Bergeron, his pioneering work is an inspiration. \bigskip \par}

\begin{abstract}
    We study two quantifications of being a homology sphere for hyperbolic 3-manifolds, one geometric and one topological: the spectral gap for the Laplacian on coclosed 1-forms and the size of the first torsion homology group. We first construct a sequence of closed hyperbolic integer homology spheres with volume tending to infinity and a uniform coclosed 1-form spectral gap. This answers a question asked by Lin--Lipnowski. We also find sequences of hyperbolic rational homology spheres with the same properties that geometrically converge to a tame limit manifold. Moreover, we show that any such sequence must have unbounded torsion homology growth.
    Finally we show that a sequence of closed hyperbolic rational homology 3-spheres with uniformly bounded rank and a uniform coclosed 1-form spectral gap must have torsion homology that grows exponentially in volume.
\end{abstract}

\setcounter{tocdepth}{1}
\tableofcontents

\begin{section}{Introduction}

In this paper we investigate some links between spectral and topological information in sequences of hyperbolic $3$-manifolds.  Understanding the spectrum of the Laplacian and how it relates to the topology of a Riemannian manifold is a central problem in geometry.  We will study the spectrum of the Laplacian on differential forms for a closed hyperbolic 3-manifold.  By the Hodge theorem, the non-zero spectrum of the Laplacian on 1-forms is divided into eigenvalues corresponding to coexact 1-forms and eigenvalues that correspond to exact 1-forms, and the latter are exactly the non-zero eigenvalues for the Laplacian on functions. By Poincar\'e duality (via the Hodge star), the spectrum of the Laplacian on 2-forms is determined by the spectrum of the Laplacian on 1-forms.  We therefore restrict our attention in what follows to the coexact part of the spectrum of the Laplacian on 1-forms. 

Throughout this paper, we say that a non-negative, self-adjoint operator has a spectral gap if its smallest eigenvalue is non-zero; the spectral gap for such an operator is its smallest eigenvalue. In this convention, by the Hodge theorem, a closed 3-manifold has a spectral gap on coclosed 1-forms if and only if it is a rational homology sphere {(the coclosed 1-form eigenvalues are just the coexact 1-form eigenvalues together with zero with multiplicity $\dim H^1(M,\mathbb R)$)}. We view the existence of a spectral gap as a quantification of the topological property of having the same rational homology as a sphere.  

The size of the first non-zero eigenvalue of the Laplacian on coexact 1-forms is related to questions about the growth of torsion homology in sequences of hyperbolic 3-manifolds, as well as to questions about the vanishing of gauge-theoretic invariants associated to $M$.  In the latter connection, Lin--Lipnowski established a new relationship between the existence of solutions to the Seiberg--Witten equations on a hyperbolic 3-manifold and the geometry of that manifold \cite{ll22}. In particular, they showed that a hyperbolic rational homology $3$-sphere with a large enough spectral gap for the Laplacian acting on coexact 1-forms does not admit any irreducible solutions to the Seiberg--Witten equations, and verified that there are many hyperbolic rational homology spheres satisfying their condition. As a step towards constructing infinitely many examples to which their theorem applies, Lin--Lipnowski asked whether there exists an infinite sequence of closed hyperbolic rational homology 3-spheres with a uniform spectral gap on coexact 1-forms.  Our first theorem answers their question in the affirmative.

The study of the spectrum on coclosed 1-forms is particularly rich in the case of hyperbolic 3-manifolds. The following are some interesting sequences of hyperbolic 3-manifolds which do not have a uniform spectral gap on coclosed 1-forms. As observed by Bergeron--Clozel \cite{bergeron_clozel}, sequences of hyperbolic arithmetic congruence 3-manifolds can never have a uniform gap for their 1-form spectrum. Moreover, there are examples of hyperbolic rational homology 3-spheres constructed by Rudd \cite[Section 6]{rudd23} whose limit is not $\mathbb{H}^3$ but which still fail to have a uniform gap on coclosed 1-forms; in fact, their spectral gap is exponentially small in their volume.
Calegari--Dunfield \cite{Calegari2006} have constructed examples of hyperbolic rational homology 3-spheres with injectivity radius growing without bound, and which therefore (see Section~\ref{sec.injrad}) cannot have a uniform spectral gap.\\

In the second part of the paper we study the connection between spectral information attached to  a sequence of hyperbolic 3-manifolds and torsion homology growth.\\
This is motivated by work of Bergeron--\c{S}eng\"un--Venkatesh~\cite{bergeron_sengun_venkatesh} which studies the delicate interplay between the complexity of homology classes of arithmetic manifolds, the number of small eigenvalues of the Laplacian acting on $1$-forms, and the exponential growth of torsion homology in the volume in towers of coverings. Their work indicates a possible relation between a tower of arithmetic congruence hyperbolic $3$-manifolds having a uniform spectral gap for the $1$-form Laplacian (or less restrictively, few small eigenvalues) and this tower exhibiting exponential growth of torsion homology. B--\c{S}--V's work is based on a conjecture about the topological complexity of (surfaces representing homology classes in) arithmetic congruence manifolds, and crucially relies on the Cheeger--M\"uller theorem by linking torsion homology to regulators and analytic torsion.

One aim of our project is to understand in what way the behaviour predicted by B--\c{S}--V is specific to arithmetic congruence hyperbolic manifolds. Without using Cheeger--M\"uller's theorem, we show that any sequence of pointed closed hyperbolic rational homology $3$-spheres with volumes tending to infinity, a uniform spectral gap for coexact 1-forms, and whose fundamental groups can be generated by a uniformly bounded number of elements must have torsion homology that grows exponentially in volume.\\

We now give precise statements of our main results.   

	\begin{subtheorem}\label{thm:1}
		There exist infinitely many distinct hyperbolic integer homology 3-spheres with a uniform spectral gap on coexact 1-forms.
	\end{subtheorem}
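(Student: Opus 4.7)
The plan is to construct the sequence by Dehn surgery on a carefully chosen hyperbolic knot complement and to verify the uniform coexact 1-form spectral gap through a geometric convergence argument combined with quantitative estimates on the surgery-induced small eigenvalues. Concretely, I would fix a hyperbolic knot $K \subset S^3$ and set $N = S^3 \setminus K$. For each integer $n$ the $1/n$-Dehn filling $M_n$ is an integer homology 3-sphere, since the filling slope has meridional coefficient $1$ and kills the generator of $H_1(N;\Z) \cong \Z$; by Thurston's hyperbolic Dehn surgery theorem, $M_n$ admits a hyperbolic structure for all sufficiently large $n$, and the pointed manifolds $(M_n,p_n)$ (for basepoints in the thick part) converge in the pointed Gromov--Hausdorff sense to $(N,p)$ equipped with its complete finite-volume hyperbolic metric.

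To establish the uniform gap, I would argue by contradiction: suppose $\lambda_1^{\mathrm{coex}}(M_n) \to 0$ along a subsequence, and let $\alpha_n$ be unit-norm coexact eigen-1-forms on $M_n$ with eigenvalue $\lambda_n \to 0$. By elliptic regularity and compactness of geometric limits, the $L^2$-mass of $\alpha_n$ must concentrate in one of three regions: (i) a compact core of $N$, (ii) the cusp end of $N$, or (iii) the solid torus glued in to form $M_n$. Case (i) would produce an $L^2$ harmonic coexact 1-form on $N$; case (ii) would produce a zero in the continuous coexact 1-form spectrum of the cusp; and case (iii) corresponds to the unique class in $H^1(N;\R) \cong \R$ that is killed by the filling. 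The first two cases can be ruled out by choosing $K$ so that $N$ admits no nonzero $L^2$ harmonic coexact 1-form, together with a direct analysis of Eisenstein 1-forms on a torus cusp showing that the coexact continuous spectrum is bounded away from zero.

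The main obstacle, and the crux of the proof, is case (iii). The killed cohomology class produces a ``surgery eigenvalue'' whose naive size scales like $1/n$ --- comparable to the inverse length of the filling slope $\mu + n\lambda$ on the cusp cross-section --- which is not a uniform gap. A genuine geometric construction is therefore needed: the knot $K$ must be arranged so that the harmonic representative of the killed class has most of its $L^2$-mass concentrated in the thick part of $N$ rather than pushed deeply into the cusp in the direction being filled. To carry this out quantitatively I would produce, for each $n$, an explicit coexact test 2-form on $M_n$ --- obtained by truncating the harmonic representative of the killed class in $N$ and adding a correction supported inside the surgery tube --- and compute its Rayleigh quotient to show it is bounded below by a fixed positive constant independent of $n$. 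Arranging $K$ (or switching to a multi-cusped starting manifold with carefully chosen integer slopes) so that this Rayleigh-quotient estimate goes through uniformly is the principal technical difficulty and would be the heart of the argument.
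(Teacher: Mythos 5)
Your approach cannot work, and the obstruction is not the one you flag in case (iii) but a more basic geometric one. For $1/n$-fillings on a fixed hyperbolic knot complement $N$, Thurston's Dehn surgery theorem forces the core geodesic of the filling solid torus to have length tending to $0$, so $\operatorname{inj}(M_n)\to 0$. As discussed in Section~\ref{sec.injrad} (following \cite[Proposition 5.1]{ll22}, and see also Proposition 10 of \cite{lott} or \cite{mph90}), a short geodesic allows one to write down an explicit coexact test $1$-form supported in its Margulis tube with Rayleigh quotient tending to zero; equivalently, a uniform coexact $1$-form gap forces \emph{two-sided} uniform bounds on the injectivity radius. Hence \emph{every} sequence of $1/n$-surgeries on a fixed cusped manifold has coexact spectral gap tending to $0$, no matter how $K$ is chosen, so the "crux" you propose to resolve by concentrating the harmonic representative in the thick part is unfixable. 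Your case (ii) is also wrong on its own terms: for a finite-volume cusped hyperbolic $3$-manifold the essential spectrum of the Laplacian on $1$-forms (unlike on functions) goes down to $0$, so the continuous coexact spectrum of $N$ is not bounded away from zero; this is exactly why geometric convergence to a cusped limit is fatal here.

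The paper avoids cusped limits entirely. It builds closed manifolds by gluing compact "hyperbolic homology sums of handlebodies" $B_0,\dots,B_{N+1}$ end to end along incompressible genus-$g$ surfaces, twisting by a large fixed power of a pseudo-Anosov Torelli element; Brock--Minsky--Namazi--Souto effective geometrization then gives uniformly bilipschitz model geometry with two-sided injectivity radius bounds, and the Heegaard-type homological arrangement makes each $M_N$ an integer homology sphere while guaranteeing that the restriction $H^1(B_{i-1}\cup B_i\cup B_{i+1};\R)\to H^1(B_i;\R)$ vanishes (Proposition~\ref{prop:hknd}). The gap is then proved not by eigenform concentration/contradiction but by the dual characterization of Lemma~\ref{lem:dualchar}: every exact $2$-form is given an $L^2$-bounded primitive by splitting it along collar neighborhoods of the gluing surfaces (Lemma~\ref{lem:splitting}) and filling the remaining lumps locally in three consecutive blocks with uniform constants (Lemma~\ref{lem:filling}), a local-to-global scheme with no analogue in your proposal. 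If you want to salvage a surgery-flavored construction, note the paper does use Dehn surgery, but only once, to hyperbolize each compact building block --- not as the parameter producing the infinite family.
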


A sequence of distinct hyperbolic manifolds with a uniform spectral gap necessarily has unbounded volume, because a uniform spectral gap implies a uniform lower bound on the injectivity radius (see Section~\ref{sec.injrad}). We modify the construction used in Theorem~\ref{thm:1} to provide examples of a sequence of rational homology 3-spheres that occur in a tower of covering maps. More precisely, we have
 \begin{subtheorem}\label{thm.12}
     There exists a tower of covers of a hyperbolic rational homology 3-sphere 
     $$\cdots \to M_3\to M_2 \to M_1 \to M_0$$ such that the $M_i$ have a uniform bound on $|H_1(M_i)|$ and a uniform spectral gap on coexact 1-forms.
 \end{subtheorem}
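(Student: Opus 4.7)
The plan is to adapt the construction underlying Theorem~\ref{thm:1} so that the closed hyperbolic rational homology spheres with uniform spectral gap arise as Dehn fillings on a fixed tower of covers of a cusped hyperbolic 3-manifold, rather than as fillings on a sequence of unrelated manifolds. Concretely, I would begin with a cusped hyperbolic 3-manifold $N_0$ of the sort used in Theorem~\ref{thm:1}, together with a coherent tower of finite covers $\cdots \to N_2 \to N_1 \to N_0$ in which the cusps lift in a controlled manner. Fixing a filling slope $\alpha$ on the cusps of $N_0$ such that $M_0 := N_0(\alpha)$ is a hyperbolic rational homology sphere, the natural lifts $\alpha_i$ of $\alpha$ to the cusps of $N_i$ produce closed manifolds $M_i := N_i(\alpha_i)$ that automatically assemble into a tower $\cdots \to M_2 \to M_1 \to M_0$ of finite covers of $M_0$.

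Hyperbolicity of the $M_i$ is then a direct application of Thurston's hyperbolic Dehn surgery theorem, since the lifts of a single sufficiently long slope remain long on every cusp of $N_i$ uniformly in $i$. The uniform coexact 1-form spectral gap on the $M_i$ is more delicate; the plan is to combine (i) a uniform coclosed 1-form spectral gap on the cusped manifolds $N_i$, which requires a careful choice of tower to rule out new small eigenvalues coming from representations of the deck group appearing in finite covers, with (ii) a Dehn filling spectral stability statement of the kind already needed for Theorem~\ref{thm:1}, showing that eigenvalues move only slightly under filling with sufficiently long slopes. The stability step should transfer naturally up the tower because the geometry of the filled cusp region is essentially the same in $N_i$ as in $N_0$.

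Finally, to control $|H_1(M_i)|$ uniformly, I would use the Dehn filling surjection
$$H_1(N_i;\mathbb{Z})/\langle \alpha_i \rangle \twoheadrightarrow H_1(M_i;\mathbb{Z})$$
and arrange the base, tower, and slope so that the peripheral classes $\alpha_i$ kill all but a uniformly bounded subgroup of $H_1(N_i;\mathbb{Z})$. The main obstacle will be exactly this: maintaining a uniform upper bound on $|H_1(M_i)|$ across an infinite tower of covers is a strong rigidity condition that generically fails, and enforcing it simultaneously with the spectral gap and with Thurston's long-slope hypothesis tightly constrains the choice of $N_0$, of the tower, and of $\alpha$. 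The cleanest route I can see is to pick $N_0$ so that the peripheral subgroup already surjects onto $H_1(N_0;\mathbb{Z})$ up to a bounded cokernel, and to choose the tower so that this surjectivity is preserved at every level --- so that the Dehn filling uniformly abelianises away the growth in $H_1(N_i)$ that the cover would otherwise introduce.
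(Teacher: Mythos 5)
Your route through Dehn fillings of a tower of cusped covers is genuinely different from the paper's, but it breaks at the spectral step. First, the premise that a ``Dehn filling spectral stability statement'' is ``already needed for Theorem~\ref{thm:1}'' is not correct: the proof of Theorem~\ref{thm:1} is a local-to-global cofilling argument (Lemma~\ref{lem:dualchar}, the splitting Lemma~\ref{lem:splitting}, and the local filling inequality Lemma~\ref{lem:filling}, all run on the uniformly bilipschitz BMNS model), and Dehn surgery enters only topologically, to manufacture the hyperbolic building blocks; there is no spectral perturbation under filling anywhere in that proof, so there is nothing of that kind to borrow. Second, and more decisively, step (i) of your plan --- a uniform coclosed $1$-form spectral gap on the cusped manifolds $N_i$ --- is impossible: on a finite-volume hyperbolic $3$-manifold with a rank-$2$ cusp $T^2\times[0,\infty)$ with metric $dt^2+e^{-2t}g_{T^2}$, the coclosed test forms $f(t)\,\omega$ (with $\omega$ harmonic on the torus) have Rayleigh quotient $\int (f')^2\,dt / \int f^2\,dt$, which can be made arbitrarily small, so the coclosed $1$-form spectrum of every $N_i$ reaches down to $0$. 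This is the same phenomenon the paper itself invokes (Section~\ref{sec.injrad}, and the citation of Proposition 10 of \cite{lott} and \cite{mph90}) to rule out sequences with degenerating injectivity radius; a cusp is the limiting case. So there is no gap upstairs to transfer, and any attempted ``transfer under filling'' would have to create the gap from scratch rather than preserve it. In addition, the two constraints you flag but do not resolve --- that the coverings $N_i\to N_0$ actually extend over the filling solid tori (which forces the slope to lift with degree one on every cusp at every level), and that $|H_1(M_i)|$ stays uniformly bounded up an infinite tower --- are exactly the hard content of the theorem, and your proposal leaves them as hopes rather than constructions.

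For comparison, the paper sidesteps cusps and fillings entirely at the level of the tower: it builds one explicit block $B$ (a hyperbolic homology connect sum of a handlebody and $\mathbb{RP}^3$, obtained from a $\mathbb{Z}/2$-symmetric hyperbolic link in $S^3$ descending to $\mathbb{RP}^3$) which is double covered by a two-boundary-component HHH $\widetilde B$, glues two copies of $B$ by a large power of a pseudo-Anosov Torelli map, and obtains the tower by repeatedly unfolding: the $n$-th cover is a chain of $2^n-1$ copies of $\widetilde B$ capped by copies of $B$. Bounded $|H_1|$ then comes from the Heegaard-style homological gluing pattern of the blocks, and the uniform coexact gap comes from rerunning the Theorem~\ref{thm:1} argument verbatim on these chains, since they are covered by uniformly bilipschitz model blocks satisfying the local homology-killing property of Proposition~\ref{prop:hknd}. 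If you want to salvage your approach, you would need a fundamentally different mechanism for the gap (not inherited from the cusped manifolds), at which point you are essentially forced back to a block-by-block cut-and-fill argument like the paper's.
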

We give another modification of the construction in Theorem \ref{thm:1} to prove the following theorem:
\begin{subtheorem}\label{thm.13}
    There exist infinitely many distinct closed hyperbolic 3-manifolds $M_i$ for which the first non-zero eigenvalue of both the Laplacian on functions and the Laplacian on coexact 1-forms is uniformly bounded away from zero. 
\end{subtheorem}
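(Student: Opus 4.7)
The plan is to combine the Dehn filling construction underlying Theorem~\ref{thm:1} with a base manifold chosen so that the resulting closed $3$-manifolds additionally enjoy a uniform spectral gap for the Laplacian on functions. Recall that in Theorem~\ref{thm:1} the manifolds $M_i$ are produced by Dehn filling a cusped hyperbolic $3$-manifold along slopes whose lengths tend to infinity, and the main technical step is the preservation of the coexact $1$-form spectral gap through the filling. The goal here is to arrange matters so that the resulting closed manifolds also satisfy $\lambda_1 \geq \eps$ for the Laplacian on functions, with $\eps > 0$ independent of $i$.

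The guiding principle is that sufficiently long Dehn fillings preserve spectral gaps on functions: if a finite-volume cusped hyperbolic $3$-manifold $N$ has a positive spectral gap $\eps$ on the cuspidal part of the function spectrum (with continuous spectrum starting at $1$), then any closed Dehn filling of $N$ with slopes of length at least some $L(\eps)$ has $\lambda_1 \geq \eps/2$ on functions. This follows from a standard Rayleigh quotient comparison: any putative small eigenfunction $f$ on the filled manifold can be cut off near the filling tori and transplanted to a test function on $N$, with the error introduced by the cutoff controlled by the slope length.

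It therefore suffices to perform the Theorem~\ref{thm:1} construction starting from a cusped base $N$ with $\lambda_1(N) > 0$ on functions. A natural candidate is an arithmetic congruence cusped $3$-manifold, such as a suitable congruence cover of a Bianchi orbifold, for which Selberg-type bounds give a uniform positive lower bound on $\lambda_1$. Crucially, Theorem~\ref{thm.13} relaxes the integer-homology-sphere requirement of Theorem~\ref{thm:1} to the condition that the manifolds merely be closed hyperbolic $3$-manifolds. This relaxation gives substantially more flexibility in the choice of slopes, and we use it to take slopes long enough to preserve $\lambda_1$ on functions while still executing the argument from Theorem~\ref{thm:1} that preserves the coexact $1$-form gap.

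The main obstacle will be coordinating the two preservation arguments. Both proceed by comparison with the cusped limit, but the coexact $1$-form analysis is considerably more delicate, since harmonic $1$-forms on $N$ can give rise to near-zero coexact $1$-form eigenvalues on the filled manifold unless the slopes are chosen to rule this out. The function spectrum, by contrast, is insensitive to these finer topological features and is controlled purely by the geometry of the filling. We expect the two conditions to be essentially independent, so that with a careful choice of arithmetic base $N$ and of Dehn filling slopes, both spectral gaps can be maintained simultaneously, but verifying this compatibility rigorously is the principal technical point of the proof.
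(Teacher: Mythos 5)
There is a genuine gap here, and it starts with a misreading of the construction behind \cref{thm:1}. In the paper, the infinite sequence for \cref{thm:1} is \emph{not} obtained by Dehn filling a fixed cusped manifold along slopes of growing length: Dehn surgery appears only once, with a fixed coefficient $1/q$, to manufacture the hyperbolic building blocks (the HHH's of \cref{sec:construction}); the sequence then comes from gluing more and more such blocks end to end, with hyperbolicity supplied by the Brock--Minsky--Namazi--Souto theorem (\cref{bmnsmainthm}). More seriously, the route you propose cannot work even in principle for the coexact $1$-form gap: if you fill a fixed cusped manifold $N$ along slopes whose lengths tend to infinity, the core geodesics of the filling solid tori become arbitrarily short, so the injectivity radius of the filled manifolds tends to zero (and the manifolds converge geometrically to the cusped $N$). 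As explained in \cref{sec.injrad} (via \cite[Proposition 5.1]{ll22}, and again in \cref{sec.q} via \cite{lott}, \cite{mph90}), a sequence with injectivity radius tending to zero admits explicit test coexact $1$-forms with Rayleigh quotient tending to zero, so its coexact $1$-form spectral gap necessarily degenerates. Thus the ``more flexibility in the choice of slopes'' you invoke is exactly the flexibility that destroys the $1$-form gap, and no choice of arithmetic base $N$ or Selberg-type bound on the function spectrum can repair this; the ``compatibility'' you defer to the end is not a technical point but an impossibility.

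For contrast, the paper proves \cref{thm.13} by a completely different mechanism (\cref{sec.var}): fix a family of $d$-regular expander graphs, take one copy of a hyperbolic homology connect sum of $d$ handlebodies per vertex, and glue boundary components along edges by a large fixed power of a pseudo-Anosov map, so that all gluings satisfy the bounded-combinatorics/height hypotheses of \cref{bmnsmainthm} uniformly. The uniform function gap is then inherited from the graph: one discretizes a mean-zero function by averaging over blocks, applies a Poincar\'e inequality on unions of two adjacent blocks, and uses the expander inequality for the resulting function on the graph. The coexact $1$-form gap comes from choosing the gluings so that the Lagrangian subspaces of $H^1$ on identified boundary components are transverse, which yields the analogue of \cref{prop:hknd} (all cohomology of a block dies in the union with its neighbors) and lets the local-to-global cofilling argument of \cref{thm:1} run verbatim. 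Note also that these manifolds have nonzero $b_1$ --- the statement is about the \emph{coexact} spectrum precisely because one gives up being a rational homology sphere; your proposal does not engage with this feature at all.
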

We point out that the $M_i$ of Theorem \ref{thm.13} are not rational homology spheres, {so 0 is a 1-form eigenvalue (with high multiplicity)}.  Theorem~\ref{thm.13} can be seen as an attempt to produce high dimensional spectral expanders in the sense of~\cite{lubotzky_high_2017} in the hyperbolic 3-manifold setting (see Question~\ref{q.sar}). When the condition of hyperbolicity is relaxed, forthcoming work of the last author~\cite{jz} shows that one can indeed construct such 3-manifold expanders, which will consequently be rational homology spheres.

The sequence of integer homology spheres produced in the proof of Theorem~\ref{thm:1} does not have a tame limit. In the next theorem, we provide another construction that has a tame limit:

    \begin{subtheorem}\label{thm:11}
		There exists a sequence $M_i$ of pointed hyperbolic rational homology 3-spheres geometrically converging to a tame, infinite volume manifold $M$ with a uniform spectral gap on coexact 1-forms.
	\end{subtheorem}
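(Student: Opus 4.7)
The plan is to modify the construction that proves Theorem~\ref{thm:1} so that the sequence of closed hyperbolic rational homology $3$-spheres it produces is centered around a common ``core'' piece, with the rest of each manifold receding to infinity. Presumably the proof of Theorem~\ref{thm:1} assembles each integer homology sphere from a finite collection of hyperbolic building blocks glued along totally geodesic or cuspidal boundaries, with the uniform coexact $1$-form spectral gap controlled by a local argument depending on the blocks and the gluing, not on the global topology. The ``no tame limit'' feature of that construction is a side effect of letting the topology evolve globally as the volume grows.

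To force a tame limit I would realize each $M_i$ as a closed hyperbolic rational homology sphere containing an isometric copy of a fixed pointed piece $(B_0,x_0)$, with the rest of $M_i$ obtained by attaching increasingly long configurations of blocks outside $B_0$, so that the metric balls $B_R(x_0)\subset M_i$ stabilize for every $R$ once $i$ is large enough. The cleanest way to guarantee that the pointed limit is tame is to realize the $M_i$ as finite covers of a fixed closed hyperbolic $3$-manifold $M_0$ produced by Theorem~\ref{thm:1}, corresponding to a sequence of finite-index subgroups $\Gamma_i \le \pi_1(M_0)$ converging in the Chabauty topology to a finitely generated infinite-index subgroup $\Lambda \le \pi_1(M_0)$. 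Such separable subgroups exist for closed hyperbolic $3$-manifold groups by Agol--Wise. Then $M_i=\mathbb{H}^3/\Gamma_i$ converges in the pointed Gromov--Hausdorff topology to $M=\mathbb{H}^3/\Lambda$, which is tame by the tameness theorem of Agol and Calegari--Gabai, and of infinite volume since $\Lambda$ has infinite index.

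With this setup in place, the rational homology sphere condition on each $M_i$ would be enforced by ensuring that the chosen subgroups $\Gamma_i$ give covers with trivial rational $H_1$, possibly after additional controlled surgery in the ``tails'' that does not disturb $B_0$. For the uniform spectral gap, the key input is a positive bottom for the $L^2$ spectrum of $\Delta$ on coclosed $1$-forms on the infinite-volume limit $M$; this transfers back to $M_i$ for all $i$ large by lower semicontinuity of the spectrum under pointed geometric convergence (compare the arguments in Section~\ref{sec.injrad}). Alternatively, because the spectral estimate behind Theorem~\ref{thm:1} is local in the building blocks, it should apply to the new sequence essentially unchanged.

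The main obstacle is the simultaneous realization of (i) the rational homology sphere condition on each $M_i$, (ii) genuinely infinite-volume tameness of the limit (not just a finite-volume cusped limit, which would come trivially from Dehn surgery on a single cusped manifold), and (iii) a uniform coexact $1$-form spectral gap. Passing to finite covers generically introduces new rational $H_1$ classes and new small $1$-form eigenvalues coming from nontrivial representations of the covering group. Arranging the construction so that neither of these occurs while still Chabauty-approximating the target subgroup $\Lambda$ --- and producing an $M$ whose end structure guarantees no $L^2$ coclosed $1$-forms near zero --- will be the central technical task.
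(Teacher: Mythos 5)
Your proposal has a genuine gap at its central step: the spectral-gap transfer goes in the wrong direction. A positive bottom for the coclosed $1$-form spectrum on the infinite-volume limit $M$ does \emph{not} pass to the approximating closed manifolds $M_i$ under pointed geometric convergence: a small coexact eigenform on $M_i$ can be concentrated far from the basepoint, in the part of $M_i$ that the limit never sees, so no ``lower semicontinuity'' argument can yield a uniform gap for the $M_i$. The only implication available in this setting is the contrapositive one the paper itself sketches in its discussion of Lott's work: if the limit has nontrivial reduced $L^2$-cohomology (or small spectrum), then cutting off produces test forms showing the gaps of the $M_i$ tend to $0$. Your fallback --- that the estimate behind Theorem~\ref{thm:1} is ``local in the building blocks'' and so applies unchanged --- also fails for any sequence with a tame limit: that locality rests on Proposition~\ref{prop:hknd}, i.e.\ the homology of each block dies in its immediate neighbors, and this property forces unbounded rank and a non-tame limit. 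A tame-limit sequence necessarily contains a long product region $\Sigma\times[-N,N]$ whose homology classes die only at the two far ends, so the filling argument must be global along the product region.

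The paper handles exactly this difficulty by a different construction and a new argument: it glues two fixed homology handlebodies $E^{\pm}$ along $\Sigma$ by powers $\varphi^{2N}$ of an explicit pseudo-Anosov acting hyperbolically on $H_1(\Sigma;\R)$, chosen so that the Lagrangian killed in $E^{+}$ (resp.\ $E^{-}$) misses the contracting (resp.\ expanding) subspace (conditions (iii)--(v) of Section~\ref{sec.pf:11}). The uniform gap for every $M_N$ is then proved directly (Theorem~\ref{thm:lott_type_gap}) by constructing $L^2$-bounded primitives: a class supported in the middle is split into a $V_N^{+}$ and a $V_N^{-}$ part and pushed toward the respective end, with Lemmas~\ref{lem:unif_transverse} and~\ref{lem:generic_growth} supplying the uniform transversality and exponential decay that keep the total $L^2$ cost bounded; BMNS effective geometrization gives the uniform bilipschitz control (i)--(ii). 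This also makes the $M_N$ rational homology spheres with explicitly computed, exponentially growing torsion --- consistent with Corollary~\ref{cor:22}, which shows such growth is forced. Your finite-cover/Chabauty route additionally leaves unresolved how to keep the covers rational homology spheres (generic finite covers of a hyperbolic homology sphere acquire positive $b_1$), but even granting that, the proposal as written supplies no mechanism that actually produces a uniform gap on the closed manifolds.
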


The manifolds in the sequence are rational homology 3-spheres 
 with a uniformly bounded number of generators for their fundamental groups. 
 The size of the torsion homology groups $H_1(M_i;\mathbb{Z})_{\text{tors}}$ is unbounded along this sequence. The next theorem shows that this is always the case for sequences of hyperbolic 3-manifolds with tame limits and a uniform spectral gap for coexact 1-forms.  
 
 
	\begin{thm}\label{thm:2}
            Suppose $M_i$ is a sequence of closed hyperbolic 3-manifolds geometrically converging to a tame manifold $M$ with at least one end. If there is a uniform upper bound on $|H_1(M_i;\Z)|$, then the spectral gap for coexact 1-forms goes to zero.
	\end{thm}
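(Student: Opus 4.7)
We argue by contradiction, assuming a uniform coexact $1$-form spectral gap $\lambda > 0$ along the sequence. Since $|H_1(M_i;\Z)|$ is uniformly bounded, each $H_1(M_i;\Z)$ is finite, so every $M_i$ is a rational homology $3$-sphere. In particular, there are no harmonic $1$-forms on $M_i$, and $\|d\omega\|^2 \geq \lambda\|\omega\|^2$ for every non-zero coclosed $1$-form $\omega$ on $M_i$. The strategy is to exhibit, for each large $L$ and all $i$ sufficiently large, a non-zero coclosed $1$-form on $M_i$ with Rayleigh quotient $O(1/L^2)$, contradicting the gap for $L \gg 1$.

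Fix an end $E$ of $M$; for concreteness assume $E$ is a cusp, so a neighborhood of infinity in $E$ has the form $T^2 \times [0,\infty)$ with metric $dt^2 + e^{-2t} g_{T^2}$. Pick a harmonic $1$-form $\alpha$ on $T^2$ representing a non-trivial cohomology class, and a cutoff $\chi_L:[0,\infty)\to[0,1]$ with $\chi_L \equiv 1$ on $[0,L]$, $\supp\chi_L \subset [0,2L]$, and $|\chi_L'|\leq 2/L$. Set $\omega_L := \chi_L(t)\,\alpha$. Using the conformal invariance of the Hodge star on $1$-forms on a surface, one checks $\delta\omega_L = 0$, and a direct computation in the cusp metric gives $\|\omega_L\|_{L^2(M)}^2 \asymp L$ and $\|d\omega_L\|_{L^2(M)}^2 \asymp 1/L$.

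By geometric convergence, for each $L$ there exists $i(L)$ such that for $i\geq i(L)$ there is an almost-isometric diffeomorphism $\phi_i^L$ from $\supp \omega_L$ into $M_i$. Set $\omega_L^{(i)} := (\phi_i^L)_*\omega_L$, and consider its Hodge decomposition $\omega_L^{(i)} = df_i + \eta_i$ (with no harmonic component, since $b_1(M_i) = 0$, so $\eta_i$ is coexact). One has $\|d\eta_i\|^2 = \|d\omega_L^{(i)}\|^2 \lesssim 1/L$, so it suffices to show $\|\eta_i\|^2 \gtrsim L$, which will give $\eta_i$ as a non-zero coexact $1$-form with Rayleigh quotient $\lesssim 1/L^2$. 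For each $t_0\in[0,L]$, let $T_i^2(t_0):=\phi_i^L(T^2\times\{t_0\}) \subset M_i$. The restriction $\omega_L^{(i)}|_{T_i^2(t_0)}$ pulls back via $\phi_i^L$ to $\alpha$, so it represents a non-trivial class in $H^1(T_i^2(t_0);\R)$; the exact part $df_i$ restricts to an exact form on $T_i^2(t_0)$ and contributes trivially. Hence $\eta_i|_{T_i^2(t_0)}$ represents the same non-trivial class, and its $L^2$-norm on $T_i^2(t_0)$ is bounded below by that of the harmonic representative, which by the conformal invariance of $\|\cdot\|_{L^2}$ on $1$-forms on a surface is $\asymp 1$ uniformly in $t_0$. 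Integrating over $t_0\in[0,L]$ (using $dV = dt\wedge dA$ in the cusp) gives $\|\eta_i\|^2_{L^2(M_i)}\gtrsim L$, as needed.

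The main obstacle is extending the test-form construction to ends that are not cusps, especially degenerate ones: one needs a long product region $\Sigma\times[0,L]$ of controlled geometry inside the end, on which a cusp-style conformal-invariance argument can be run using a harmonic $1$-form on a cross-section $\Sigma$ of positive genus. For geometrically finite funnels the metric is explicit enough that this is straightforward; for degenerate ends it should follow from the tameness theorem together with the existence of pleated or minimal surfaces exiting the end.
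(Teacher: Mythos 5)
There is a genuine gap, and it is exactly at the point you defer. First, note that the cusp case you work out in detail is vacuous under your own contradiction hypothesis: a uniform coexact $1$-form gap forces two-sided injectivity radius bounds on the $M_i$, so the tame limit $M$ has injectivity radius bounded above and below and its ends are degenerate — neither cusps nor funnels occur (and if they did, the gap would tend to zero by the standard test forms of \cite[Prop.~5.1]{ll22} with no homology hypothesis at all). So the ``main obstacle'' you flag is the entire theorem. In a degenerate end the product region $\Sigma\times[0,L]$ of controlled geometry does exist (this is Minsky's bounded-geometry model, which is how \cref{prop:geometricpart} sets things up), but the conformal-invariance miracle that drives your cusp computation fails: the conformal structures of the cross-sections move along a Teichm\"uller geodesic, so the cross-sectional harmonic norm of a fixed class in $H^1(\Sigma;\R)$ is not comparable to $1$ uniformly in $t$ — it evolves under the holonomy/pseudo-Anosov action and typically grows or decays exponentially. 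Consequently you cannot simultaneously get $\|\omega_L\|^2\gtrsim L$ and $\|d\omega_L\|^2\lesssim 1/L$ for a transplanted cutoff of a fixed class, and pleated or minimal surfaces exiting the end do not repair this, since the problem is not the existence of the product region but the behaviour of cohomology classes along it. (There is also a smaller slip: with $\chi_L\equiv 1$ on $[0,L]$ your form does not vanish near the $t=0$ boundary of its support, so it does not extend by zero into $M_i$; a two-sided cutoff fixes this but already degrades the cusp Rayleigh quotient to $\asymp 1/L$.)

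More decisively, your argument uses the hypothesis of uniformly bounded $|H_1(M_i;\Z)|$ only to conclude $b_1(M_i)=0$, and the statement is false under that weaker hypothesis: \cref{thm:11} of the paper produces hyperbolic rational homology spheres geometrically converging to a tame, one-ended (degenerate-end) manifold with a \emph{uniform} coexact $1$-form gap — there the torsion grows, which is precisely what your sketch never exploits. So any completion of your plan that does not use the torsion bound in the construction of the test form must fail. The paper's proof goes the other way around: after reducing to a long product region (\cref{prop:geometricpart}), it proves in \cref{thm:torsion_analytic} that a uniform gap forces $\#H_1(M_N;\Z)_{\tors}\gtrsim \lambda_0^{2g}(1+\delta)^N$, by playing off three facts about the lattice $\Lambda=H_1(\Sigma;\Z)$ and the subgroups of classes dying to the right and left: uniform transversality from the gap (\cref{lem:transverse}), a uniform lower bound on cross-sectional norms of nonzero integral classes (\cref{lem:integrality_bd}), and exponential decay along the product region for classes dying on one side, again from the gap (\cref{lem:V+-_exp_decay}). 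Bounded $|H_1|$ then contradicts this exponential growth for large $N$. If you want a direct test-form proof, the torsion bound would have to enter the choice of the class (e.g.\ to find an integral class whose cross-sectional norms do not decay too fast in either direction), which is essentially the same lattice mechanism in disguise.
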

The estimates used to prove \cref{thm:2} are effective and can be adapted to prove the following corollary.  
\begin{cor}\label{cor:21}
    There are only finitely many closed hyperbolic 3-manifolds with spectral gap for coexact 1-forms bounded below, $|H_1(M;\Z)|$ bounded above, and whose fundamental groups can be generated by a given finite number of elements.
\end{cor}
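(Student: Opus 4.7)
The plan is a contradiction argument that reduces to Theorem~\ref{thm:2} via a pointed geometric limit. Suppose there exist infinitely many closed hyperbolic 3-manifolds $M_i$ with coexact 1-form spectral gap bounded below by $\iota_0 > 0$, with $|H_1(M_i;\Z)| \leq H_0$, and with $\pi_1(M_i)$ generated by at most $k$ elements. By the well-ordering of the volume spectrum of hyperbolic 3-manifolds (J{\o}rgensen--Thurston), since the $M_i$ are pairwise distinct, after passing to a subsequence we have $\vol(M_i) \to \infty$. The spectral gap assumption yields a uniform lower bound $\iota_1 > 0$ on the injectivity radius (Section~\ref{sec.injrad}), so the $M_i$ lie entirely in their respective $\iota_1$-thick parts.

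Choose basepoints $p_i \in M_i$, for instance at a vertex of a minimum-length carrier graph for $\pi_1(M_i)$. By Cheeger--Gromov compactness for pointed Riemannian manifolds with uniformly bounded geometry, pass to a subsequence converging in the pointed Gromov--Hausdorff sense to a complete hyperbolic 3-manifold $(M_\infty, p_\infty)$ with $\inj M_\infty \geq \iota_1$. Since the $M_i$ are pairwise distinct closed manifolds whose volumes tend to infinity while the convergence is through bounded local geometry, $M_\infty$ must be noncompact, and in particular has at least one end. Using the rank bound together with the choice of basepoints, the generators of each $\pi_1(M_i, p_i)$ sit in a controlled configuration in a neighborhood of $p_i$; known results on geometric limits of bounded-rank hyperbolic 3-manifolds then imply $\pi_1(M_\infty)$ is finitely generated, whence by the tameness theorem of Agol and Calegari--Gabai, $M_\infty$ is topologically tame.

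With the hypotheses of Theorem~\ref{thm:2} verified — geometric convergence to a tame limit with at least one end, and a uniform bound on $|H_1(M_i;\Z)|$ — I conclude that the coexact 1-form spectral gap of the $M_i$ must tend to zero along the subsequence, contradicting the standing lower bound $\iota_0$. The main obstacle is the tameness of $M_\infty$: a uniform bound on the number of generators of $\pi_1(M_i)$ does not a priori translate to a finitely generated fundamental group of the geometric limit. Handling this requires exploiting the injectivity radius bound coming from the spectral gap, together with a judicious choice of basepoints so that a generating set of each $\pi_1(M_i, p_i)$ is represented by a configuration persisting under the convergence. Alternatively, since the paper notes that the estimates used in the proof of Theorem~\ref{thm:2} are effective, one expects these estimates to give directly an explicit function $F(k, H_0, \iota_0, \vol(M))$ which bounds the spectral gap from above and tends to $0$ as $\vol(M) \to \infty$, yielding the corollary together with the fact that there are only finitely many hyperbolic 3-manifolds of bounded volume.
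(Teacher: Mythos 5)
Your argument is essentially the paper's: a contradiction via a pointed geometric limit, using the injectivity radius bound coming from the spectral gap, the rank bound to control the limit, and then the torsion-growth mechanism (you invoke \cref{thm:2}; the paper goes directly through \cref{thm:torsion_analytic}) to contradict the bound on $|H_1(M_i;\Z)|$. The one step you leave unresolved --- tameness and structure of the limit from bounded rank, which you correctly flag as the main obstacle and attack via carrier-graph basepoints --- is exactly where the paper cites Biringer--Souto \cite[Proposition 6.1]{bs11} (recorded as \cref{prop:bslemma}): with basepoints chosen as you suggest, a subsequence of pairwise distinct manifolds with $\inj \ge \eps$ and bounded rank converges geometrically to a doubly degenerate manifold homeomorphic to $\Sigma \times \R$, so tameness, noncompactness, and the existence of an end all come for free, and the product regions feed directly into \cref{thm:torsion_analytic}. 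One small correction: your J{\o}rgensen--Thurston step is misstated, since pairwise distinct closed hyperbolic 3-manifolds need not have volumes tending to infinity (infinitely many Dehn fillings of a fixed cusped manifold have uniformly bounded volume); what is true, and what you actually need, is that only finitely many closed hyperbolic 3-manifolds have $\vol \le V$ and $\inj \ge \eps$, using the injectivity radius bound you already have --- though once \cref{prop:bslemma} is invoked this step is unnecessary altogether.
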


In fact, we are able to prove something stronger.  

\begin{cor}\label{cor:22}
    Suppose $M_i$ is a sequence of hyperbolic rational homology 3-spheres with a uniform bound on the number of generators of their fundamental groups $\pi_1(M_i)$ and a uniform coexact 1-form spectral gap.  Then $|H_1(M_i;\mathbb{Z})_{\textnormal{tors}}|$ grows exponentially in volume.
\end{cor}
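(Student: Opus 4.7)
The plan is to upgrade Corollary~\ref{cor:21} from a finiteness statement into an effective bound of the shape $\vol(M_i) \leq C_1 \log |H_1(M_i;\Z)_{\mathrm{tors}}| + C_2$, with $C_1, C_2$ depending only on the generator bound and the spectral gap; such an inequality is equivalent to the claimed exponential growth.

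Arguing by contradiction, I would assume after passing to a subsequence that $\vol(M_i) \to \infty$ while $\log |H_1(M_i;\Z)_{\mathrm{tors}}|/\vol(M_i) \to 0$. The uniform coexact 1-form spectral gap yields a uniform lower bound on the injectivity radius (Section~\ref{sec.injrad}), and combined with the bound on the number of generators of $\pi_1(M_i)$ one can choose basepoints $x_i \in M_i$ so that, after passing to a further subsequence, $(M_i, x_i)$ converges in the pointed geometric topology to a tame limit $M_\infty$. Since $\vol(M_i) \to \infty$, the limit $M_\infty$ is noncompact and hence has at least one end, placing us exactly in the setting of Theorem~\ref{thm:2}.

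The core of the argument is an effective version of Theorem~\ref{thm:2}. That theorem constructs, from an end of $M_\infty$ together with the geometric convergence $M_i \to M_\infty$, coexact 1-form test vectors $\omega_i$ on $M_i$ whose Rayleigh quotients tend to zero. I would track the quantitative dependence of this construction on both $\vol(M_i)$ and $|H_1(M_i;\Z)_{\mathrm{tors}}|$: the ``neck'' reaching into the end supplies an exponentially long cylinder on which to interpolate the test form, while the torsion subgroup of $H^2(M_i;\Z) \cong H_1(M_i;\Z)$ provides the only cohomological obstruction to $\omega_i$ actually being an eigenform. A Rayleigh quotient bound of the form $(\|d\omega_i\|^2 + \|d^*\omega_i\|^2)/\|\omega_i\|^2 \leq C \cdot |H_1(M_i;\Z)_{\mathrm{tors}}| \cdot e^{-c\,\vol(M_i)}$ would then suffice: under the subexponential growth hypothesis, the right-hand side eventually drops below $\lambda_0$, contradicting the uniform spectral gap. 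The principal difficulty is making the construction in Theorem~\ref{thm:2} sufficiently quantitative, which requires sharp exponential decay estimates for approximately harmonic 1-forms near ends of tame hyperbolic 3-manifolds together with a careful accounting of how many independent test forms one can produce versus how many integrality obstructions the torsion part of $H_1$ can absorb.
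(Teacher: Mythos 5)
Your proposal has two genuine gaps. The first is structural: nothing in your argument relates the length of the product region you extract to $\vol(M_i)$. Pointed geometric convergence to a tame limit with a degenerate end only yields necks in $M_i$ of length $N_i \to \infty$; it gives no lower bound on $N_i$ in terms of $\vol(M_i)$. That is enough for \cref{cor:21} (torsion unbounded along the sequence) but not for exponential growth \emph{in volume}: any estimate you can extract from the neck decays in the neck length $N_i$, not in $\vol(M_i)$, so under your contradiction hypothesis $\log|H_1(M_i;\Z)_{\tors}|/\vol(M_i) \to 0$ a bound of the shape $|H_1(M_i;\Z)_{\tors}| \gtrsim (1+\delta)^{N_i}$ produces no contradiction unless $N_i \gtrsim \vol(M_i)$. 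This is precisely where the paper's proof of \cref{cor:22} goes beyond \cref{thm:2}: it invokes the Biringer--Souto structure theorem (Theorem 14.4 of \cite{bs23}), which says that bounded rank together with the injectivity-radius lower bound (coming from the uniform gap) forces $M_i$ to decompose into at most $n(k)$ building blocks of uniformly bounded size glued along product regions; since there are boundedly many pieces, some product region has volume at least $\delta\,\vol(M_i)$, hence length proportional to $\vol(M_i)$, and then \cref{thm:torsion_analytic} applies with $N \gtrsim \vol(M_i)$. You use the rank bound only to get tameness of the limit, not this volume proportionality, and without it the scheme cannot reach the stated conclusion.

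The second gap is the analytic core itself. The paper does not prove \cref{thm:2} by constructing coexact test forms with small Rayleigh quotient; it argues in the opposite logical direction. In \cref{thm:torsion_analytic} the assumed gap $\lambda_0$ is used to produce $L^2$-bounded primitives, which give uniform transversality of $V_N^{\pm}$ (\cref{lem:transverse}) and exponential decay along the neck for classes in $V_N^{\pm}$ (\cref{lem:V+-_exp_decay}); combined with the uniform lower bound for nonzero integral classes (\cref{lem:integrality_bd}), every nonzero element of $\Lambda_N^+ + \Lambda_N^-$ is exponentially large, so the quotient $\Lambda/(\Lambda_N^+ + \Lambda_N^-)$, which injects into $H_1(M_N;\Z)_{\tors}$, is exponentially large. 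Your proposed inequality $(\|d\omega_i\|^2 + \|d^*\omega_i\|^2)/\|\omega_i\|^2 \lesssim |H_1(M_i;\Z)_{\tors}|\, e^{-c\,\vol(M_i)}$ is asserted, not derived: you would need an effective mechanism converting small torsion into a coclosed test form with small Rayleigh quotient --- essentially an effective Cheeger-type upper bound for the coexact spectrum built from fillings on the neck --- and no reason is given why such a bound should be linear in $|H_1(M_i;\Z)_{\tors}|$. The relevant homological input would be a short nonzero vector in $\Lambda_N^+ + \Lambda_N^-$ (available by Minkowski when the index is subexponential), but turning such a vector into a small-area filling or test form with uniform constants is exactly the kind of quantitative isoperimetric statement the paper deliberately avoids (the Lipnowski--Stern and Boulanger--Courtois comparisons degrade with volume). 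So even granting the geometric setup, the central estimate of your proposal remains unproved, whereas the paper's route via bounded cofillings and lattice decay bypasses test forms entirely.
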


Corollary~\ref{cor:21} is an analogue of the result proved in \cite{bs11} dealing with the spectral gap for the standard Laplacian.

\subsection{Relation to other work} 
    The sequence of hyperbolic 3-manifolds in Theorem~\ref{thm:11} is obtained by gluing two homology handlebodies $E^\pm$ with incompressible boundaries $\Sigma$ by powers of a fixed pseudo-Anosov map $\varphi$. Similar constructions have also been used in works of Brock--Dunfield and Rudd (see~\cite{bd17, rudd23}) with very different conclusions. In~\cite[Section 7]{bd17}, this construction is used in the proof of their Theorem 1.5 which shows the existence of rational homology spheres $M_i$ with a uniform injectivity radius lower bound so that the Thurston norm of generators of $H^1(M_i; \mathbb{Z})$ grows exponentially in volume. In~\cite[Section 6]{rudd23}, it is shown that there exists a sequence of rational homology spheres with coexact 1-form spectral gap that is {\emph{exponentially small}} in their volume! The difference in constructions lies in the choice of pseudo-Anosov map:~\cite{rudd23} chooses the map so that the subspace of $H_1(\Sigma;\mathbb{R})$ that is killed in the left handlebody intersects the expanding subspace of the action of $\varphi$ on $H_1(\Sigma;\mathbb{R})$. This condition is not generic and, indeed, in our case we need this intersection to be empty in order to obtain a uniform spectral gap.

The Cheeger inequality bounds from below the first Laplace eigenvalue of a closed Riemannian manifold $M$ in terms of the  Cheeger constant, which informally speaking measures how large a hypersurface that cuts $M$ into two pieces of equal volume must be.  Yau \cite[Problem 79]{yau82} asked whether it is possible to bound the bottom eigenvalue of the spectrum of the Laplacian on differential forms, given bounds on the geometry of $M$ (for example, bounds on its curvature, diameter, or injectivity radius.)     

Recent work by Lipnowski--Stern \cite{ls18} and Boulanger--Courtois \cite{bc22cheeger} have given very satisfactory general answers to this question for the coexact 1-form spectrum in the case of respectively closed hyperbolic 3-manifolds and general Riemannian manifolds.  The analogue here of the Cheeger constant for the coexact 1-form spectrum is the stable isoperimetric constant for 1-dimensional cycles, which measures the asymptotic geometric cost of bounding multiples of null-homologous cycles.  See Section \ref{sec:isoperimetric} for a more detailed description of their results and why, although quantities similar to the stable isoperimetric constant are also important in this paper, we cannot directly apply their work here.

\begin{subsection}{Bass note spectra}\label{sec:bassnote}
     It is natural to reformulate our results in terms of various bass note spectra of hyperbolic 3-manifolds. For a family $\mathcal{F}$ of Hilbert spaces $\mathcal{H}_i$ with a choice of nonnegative self-adjoint operator $\Delta_i : \mathcal{H}_i\to \mathcal{H}_i$, we define the \emph{bass note spectrum} of $\mathcal{F}$ to be
     \begin{equation*}
         \text{Bass}_{\mathcal{F}} := \overline{\{\text{smallest eigenvalue of }\Delta_i : (\mathcal{H}_i,\Delta_i)\in \mathcal{F}\}}.
     \end{equation*}
    The bass note spectrum has been studied in the case of function spectra (usually restricting to mean zero functions), e.g.
    \begin{align*}
        \mathcal{F}=\{(L^2(M_i)^{\text{mean-zero}},\Delta):M_i \text{ a hyperbolic/arithmetic/congruence surface}\},
    \end{align*}
    see~\cite{kmp},~\cite{magee2024limit}; in the case of 3-regular graphs, see~\cite{ks21},~\cite{alon_wei}. {The lectures of Sarnak~\cite{sarnak_chern} give a nice overview of the concept and various results associated to bass note spectra.} The major open Selberg $1/4$ conjecture can be strengthened using this language as
    \begin{conj}[Sarnak's Selberg+epsilon Conjecture]
        The bass note spectrum of the Laplacian on congruence surfaces is an infinite set of numbers $\geq 1/4$ whose only limit point is $1/4$.
    \end{conj}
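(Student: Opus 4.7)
The plan is to decompose the conjecture into three assertions: (a) every element of the Bass note spectrum is at least $1/4$, which is Selberg's original eigenvalue conjecture; (b) the Bass note spectrum is infinite; (c) $1/4$ is its unique accumulation point.

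For (a), the natural framework is the Langlands program. A Laplace eigenvalue $\lambda < 1/4$ on a congruence surface $\Gamma_0(N)\backslash \H^2$ corresponds to a complementary-series archimedean component of a cuspidal automorphic representation of $\GL_2(\mathbb{A}_{\mathbb Q})$, so (a) is a special case of the archimedean Ramanujan conjecture for $\GL_2$. The standard attack proceeds via bounds on Satake parameters obtained from Rankin--Selberg $L$-functions and symmetric power functorial lifts; the current state of the art is the Kim--Sarnak bound, giving $\lambda_1 \geq 975/4096$.

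Granting (a), for (c) I would invoke Weyl's law on each congruence surface: for any fixed $\delta > 0$, once $\vol(\Gamma_0(N)\backslash \H^2)$ is large enough, the counting asymptotic $N(T) \sim \vol(M)\cdot T/(4\pi)$ guarantees at least one eigenvalue in $[0, 1/4 + \delta]$, and by (a) the smallest such eigenvalue must lie in $[1/4, 1/4 + \delta]$. Since the volumes diverge as $N \to \infty$, the smallest eigenvalues of the family converge to $1/4$, forcing it to be the unique accumulation point. For (b), one needs infinitely many \emph{distinct} smallest eigenvalues; I would argue this from the algebraicity of Hecke eigenvalues of newforms at varying levels together with a generic non-coincidence argument, ruling out the degenerate scenario that only finitely many values of $\lambda_1$ are attained across the family.

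The main obstacle, by an enormous margin, is (a). Selberg's conjecture is among the central open problems in automorphic forms; closing the gap from $975/4096$ to $1/4$ is tantamount to the full archimedean Ramanujan conjecture for $\GL_2$ over $\mathbb{Q}$, for which no route is currently known without fundamentally new input --- for instance, functorial transfers beyond the symmetric powers presently accessible, or new analytic techniques for the relevant $L$-functions. I would not expect genuine progress on this conjecture in the near term, and consequently the proposal above, while clean in structure, is essentially contingent on a resolution of one of the deepest open questions in the subject.
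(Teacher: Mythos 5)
The statement you were asked about is not a theorem of the paper at all: it is stated there as an open conjecture (Sarnak's ``Selberg$+\epsilon$'' strengthening of Selberg's $1/4$ eigenvalue conjecture), recorded only to motivate the language of bass note spectra and rigidity. The paper offers no proof, so there is nothing to compare your argument against --- and your proposal is not a proof either, as you yourself concede. Your step (a) is precisely Selberg's eigenvalue conjecture, equivalently the archimedean Ramanujan conjecture for $\GL_2/\mathbb{Q}$; the Kim--Sarnak bound $\lambda_1 \geq 975/4096$ is the best known, and no argument you outline closes the gap to $1/4$. Reducing a conjecture to an equally famous open conjecture is a correct observation about logical structure, but it leaves the statement exactly as open as before.

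Even granting (a), the remaining steps are not yet arguments. For (c), Weyl's law on a fixed surface says nothing uniform about eigenvalues in a fixed window $[1/4,1/4+\delta]$ as the level varies; what you actually need is a uniform statement in the level aspect (e.g.\ limit multiplicity / Benjamini--Schramm convergence of congruence surfaces to $\mathbb{H}^2$, which forces eigenvalues --- indeed a positive proportion of them --- into any such window once the volume is large, and hence forces the bass note down to $1/4+\delta$), together with the finiteness of congruence surfaces of bounded volume so that any accumulation of bass notes comes from volumes tending to infinity. You should also be careful that congruence surfaces are noncompact, so the relevant count is of the discrete (cuspidal) spectrum, not the full spectral count. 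For (b), ``algebraicity plus generic non-coincidence'' is not an argument: you would need to actually exhibit infinitely many distinct values of the smallest eigenvalue across the family, and nothing in your sketch does this. So the concrete gap is twofold: the core of the statement is an open problem that your proposal does not address beyond citing it, and the auxiliary steps (b) and (c) are asserted rather than proved, with (c) invoking the wrong tool for the uniformity that is needed.
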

    We say a bass note spectrum is \emph{rigid} if it has at most one limit point (usually this limit point is $0$). Otherwise, it is \emph{non-rigid}. 
    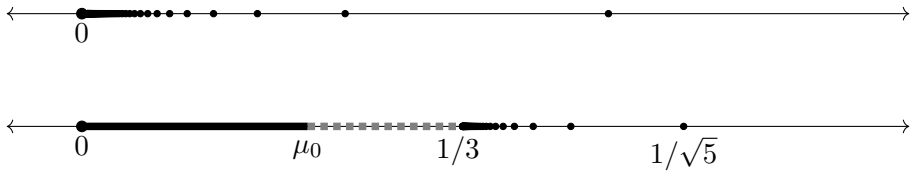
\begin{figure}[ht]
    \centering
    \begin{tikzpicture}
        \draw[<->] (-6,0) -- (6,0);
        \draw[<->] (-6,1.5) -- (6,1.5);
        \foreach \n in {1,...,100}
        {
        \filldraw[black] (7/\n-5,1.5) circle (1.2pt);
        }
        \foreach \n in {1,...,50}
        {
        \filldraw[black] (3/\n,0) circle (1.2pt);
        }
        \draw[line width=1mm , dotted, gray] (-2,0) -- (0,0);
        \draw[line width=1mm , black] (-5,0) -- (-2,0);
        \filldraw[black] (-5,1.5) circle (2pt) node[anchor=north]{0};
        \filldraw[black] (-5,0) circle (2pt) node[anchor=north]{0};
        \filldraw[black] (3,0) circle (0pt) node[anchor=north]{$1/\sqrt{5}$};
        \filldraw[black] (0,0) circle (0pt) node[anchor=north]{$1/3$};
        \filldraw[black] (-2,-0.05) circle (0pt) node[anchor=north]{$\mu_0$};
        \end{tikzpicture}
    \caption{These sketches illustrate the rigidity/non-rigidity behaviour of bass-note spectra.  Top: A rigid spectrum which is infinite and discrete with the unique accumulation point $0$.
    Bottom: A non-rigid spectrum; here showing the Markoff spectrum (see \cite{series} and Chapter 7 of~\cite{cf89} for nice expositions) having a continuous (``flexible'') bottom part, a discrete (``rigid'') top part, and a fractal transition part.}
    \label{fig}
\end{figure}

    Our main theorems can now be stated in terms of the bass note spectra $\text{Bass}_{\mathcal{F}(\mathcal{M})}$, where 
    $$ \mathcal{F}(\mathcal{M}) = \{(L^2(M,T^*M)^{\text{coexact}}, \Delta_1): M\in \mathcal{M}\},$$
    $\mathcal{M}$ is a family of hyperbolic 3-manifolds, and $\Delta_1$ is the 1-form Laplacian. We consider the following cases:
    \begin{enumerate}
        \item $\mathcal{M}_1$ consists of hyperbolic manifolds which are integer homology 3-spheres.
        \item $\mathcal{M}_2$ consists of hyperbolic 3-manifolds with homology groups of cardinality at most $N$.
        \item $\mathcal{M}_3$ consists of hyperbolic rational homology 3-spheres with fundamental group having at most $L$ generators.
    \end{enumerate}
    Then, as a consequence of our theorems, we have
     \begin{thm}
         $\textnormal{Bass}_{\mathcal{F}(\mathcal{M}_1)}$ is non-rigid, and for any $L,N$ large, $\textnormal{Bass}_{\mathcal{F}(\mathcal{M}_2)}$ and $\textnormal{Bass}_{\mathcal{F}(\mathcal{M}_3)}$ are non-rigid. On the other hand, $\textnormal{Bass}_{\mathcal{F}(\mathcal{M}_2 \cap \mathcal{M}_3)}$ is rigid.
     \end{thm}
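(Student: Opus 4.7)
The theorem bundles four assertions: non-rigidity of $\textnormal{Bass}_{\mathcal{F}(\mathcal{M}_i)}$ for $i=1,2,3$ (with $L,N$ sufficiently large in the latter two cases), and rigidity of $\textnormal{Bass}_{\mathcal{F}(\mathcal{M}_2\cap \mathcal{M}_3)}$. The plan is to assemble these as short consequences of theorems already stated, plus one standard upper bound on eigenvalues.

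To establish non-rigidity of $\textnormal{Bass}_{\mathcal{F}(\mathcal{M}_1)}$, I will exhibit two distinct limit points. Theorem~\ref{thm:1} produces an infinite sequence $\{M_n\}$ of hyperbolic integer homology 3-spheres whose smallest coexact 1-form eigenvalues $\lambda_n$ satisfy $\lambda_n \geq c > 0$. The uniform spectral gap yields a uniform lower bound on injectivity radius (Section~\ref{sec.injrad}), and a standard localized test-form argument on a ball of radius strictly less than this bound furnishes a matching uniform upper bound $\lambda_n \leq C$. Thus $\{\lambda_n\} \subset [c,C]$ is an infinite bounded set and accumulates at some $\mu > 0$. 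For a second limit point at $0$, I take $1/n$-Dehn surgeries on a hyperbolic knot in $S^3$: these fillings are integer homology 3-spheres for every $n$, are hyperbolic for all but finitely many $n$ by Thurston's hyperbolic Dehn surgery theorem, and converge geometrically to the cusped knot complement. Because the core geodesic of the filling torus has length tending to zero, the injectivity radius tends to zero, and by the principle recalled above the coexact 1-form spectral gap tends to zero. Hence $0$ and $\mu$ are two distinct limit points of $\textnormal{Bass}_{\mathcal{F}(\mathcal{M}_1)}$.

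The same two sequences will also witness non-rigidity for $\mathcal{M}_2$ and $\mathcal{M}_3$. Both sequences consist of integer homology 3-spheres with $|H_1|=1$, so they lie in $\mathcal{M}_2$ for every $N \geq 1$. Their fundamental groups have uniformly bounded rank: the Theorem~\ref{thm:1} sequence inherits a bound from its fixed Heegaard-splitting construction, and Dehn filling does not increase the rank of $\pi_1$ beyond that of the cusped parent manifold. Taking $L$ larger than both of these bounds places both sequences in $\mathcal{M}_3$ as well, and the limit-point argument carries over verbatim.

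Finally, the rigidity of $\textnormal{Bass}_{\mathcal{F}(\mathcal{M}_2 \cap \mathcal{M}_3)}$ is immediate from Corollary~\ref{cor:21}: for every $c > 0$ there are only finitely many $M \in \mathcal{M}_2 \cap \mathcal{M}_3$ whose coexact 1-form spectral gap exceeds $c$, so the only possible limit point of the Bass spectrum is $0$, whence the spectrum is rigid. The only nontrivial input beyond collecting existing theorems is the uniform upper bound on $\lambda_n$ in the Theorem~\ref{thm:1} family, needed so that the eigenvalues accumulate in $(0,\infty)$ rather than escape to $+\infty$; this is a routine consequence of the uniform injectivity radius bound via compactly supported test forms on a small ball, and is the one step where I expect mild bookkeeping rather than genuine difficulty.
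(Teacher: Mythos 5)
Most of your proposal lines up with what the paper intends: rigidity of $\textnormal{Bass}_{\mathcal{F}(\mathcal{M}_2\cap\mathcal{M}_3)}$ is indeed immediate from \cref{cor:21}; the localized test-form upper bound $\lambda_n\leq C$ coming from the uniform injectivity radius bounds is fine; and producing $0$ as a limit point via $1/n$-fillings of a hyperbolic knot (integer homology spheres, hyperbolic for large $n$, injectivity radius and hence coexact gap tending to $0$ by the principle recalled in Section~\ref{sec.injrad}) is a legitimate variant of the paper's own remark in Section~\ref{sec.q} about shrinking the injectivity radius by a large Dehn surgery on one block.

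The genuine gap is your treatment of $\mathcal{M}_3$: you assert that the \cref{thm:1} family ``inherits a rank bound from its fixed Heegaard-splitting construction.'' It does not. Those manifolds are built by chaining $N+2$ blocks $B_0,\dots,B_{N+1}$ end to end with $N\to\infty$, and the paper states explicitly at the end of Section~\ref{sec:construction} that they have unbounded rank; indeed bounded rank is impossible, since an infinite family of integer homology spheres with a uniform coexact gap and uniformly bounded rank would contradict \cref{cor:21} --- the very corollary you invoke for rigidity. As written, your argument simultaneously places infinitely many uniform-gap manifolds in $\mathcal{M}_2\cap\mathcal{M}_3$ and declares $\textnormal{Bass}_{\mathcal{F}(\mathcal{M}_2\cap\mathcal{M}_3)}$ rigid, so it is internally inconsistent. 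The intended witness for the positive part of $\textnormal{Bass}_{\mathcal{F}(\mathcal{M}_3)}$ is the \cref{thm:11} family: rational homology spheres with tame geometric limit, uniformly bounded rank, and uniform coexact gap (these have exponentially growing torsion, hence are \emph{not} in $\mathcal{M}_2$ for fixed $N$, which is exactly why no conflict with rigidity arises); pair it with your Dehn-filling family (whose rank is bounded since filling only quotients $\pi_1$) to get two limit points for $\mathcal{M}_3$. A smaller caveat, shared with the paper's one-line deduction: infinitely many manifolds with gaps confined to $[c,C]$ give a limit point of the bass note \emph{set} away from $0$ only if infinitely many distinct eigenvalues actually occur there, so strictly speaking this mild point deserves a sentence rather than the bare assertion that $\{\lambda_n\}$ is an infinite set.
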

 \end{subsection}

 \begin{subsection}{Organization}
     The paper is organized as follows: In Section~\ref{sec.heu} we provide heuristics for our constructions, describe the relation of our paper to work by Lott ~\cite{lott}, and explain some of the elementary linear algebra that underlies our proofs. In Section~\ref{sec.prelim}, we  explain some structural results on hyperbolic 3-manifolds that will be important for our constructions, in particular the work by Brock--Minsky--Namazi--Souto \cite{bmns16} on effective geometrization. Their work provides strong control on the coarse geometry of hyperbolic manifolds constructed from a finite set of building blocks. In Section~\ref{sec:construction}, we describe the construction of the manifolds that demonstrate Theorem~\ref{thm:1} and conclude with a sketch providing intuition for the proofs of Theorems~\ref{thm:1} and \ref{thm:11}. In Section~\ref{sec.var}, we describe how the construction described in the previous section can be modified to give sequences that demonstrate Theorems~\ref{thm.12} and \ref{thm.13}. In Section~\ref{sec.bddprim}, we prove a couple of technical propositions that construct primitives for exact forms with an $L^2$ bound. In Sections~\ref{sec.pf:1}, \ref{sec.pf:11}, and \ref{sec.pf:2}, we prove Theorems~\ref{thm:1}, \ref{thm:11}, and \ref{thm:2} respectively. In the final Section~\ref{sec.q}, we state some open questions. 
 \end{subsection}

\end{section}

\begin{subsection}*{Acknowledgements}
We wish to thank Peter Sarnak for his enthusiasm and for many insightful questions and discussions out of which this project evolved. We are also grateful for interest in this paper and feedback from many others, including I. Biringer, N. Dunfield, M. Fraczyk, N. Kravitz, F. Lin, M. Lipnowski, J. Lott, W. L\"uck, J. Raimbault, C. Rudd, J. Souto, and M. Stern.

\end{subsection}

\begin{section}{Heuristics}\label{sec.heu}
\begin{subsection}{Hodge decomposition}
    On a Riemannian 3-manifold, the Hodge theorem gives the splitting of the de Rham complex shown below. Here $X_i$ is the space of coexact $i$-forms, $Z_i$ is the space of exact $i$-forms, and $\mathcal H_i$ is the space of harmonic $i$-forms.
\begin{equation}
\begin{tikzcd}
X_0 \arrow[d, "\oplus", phantom] \arrow[rdd, "d_0", shift left] & X_1 \arrow[d, "\oplus", phantom] \arrow[rdd, "d_1", shift left] & X_2 \arrow[d, "\oplus", phantom] \arrow[rdd, "d_2", shift left] & 0 \arrow[d, "\oplus", phantom]            \\
\mathcal{H}_0 \arrow[d, "\oplus", phantom]                      & \mathcal H_1 \arrow[d, "\oplus", phantom]                       & \mathcal H_2 \arrow[d, "\oplus", phantom]                       & \mathcal H_3 \arrow[d, "\oplus", phantom] \\
0                                                               & Z_1 \arrow[luu, "d_0^*", shift left]                            & Z_2 \arrow[luu, "d_1^*", shift left]                            & Z_3 \arrow[luu, "d_2^*", shift left]     
\end{tikzcd}
\end{equation}
The nonzero spectrum of the Laplacian is divided into eigenvalues corresponding to the singular values of $d_0$, $d_1$, and $d_2$. By Poincar\'e duality, the singular values of $d_0$ coincide with the singular values of $d_2$. These are the square roots of the eigenvalues of the usual Laplacian on functions. The remaining spectrum is determined by the singular values of $d_1$, i.e., the square roots of the eigenvalues of the Laplacian acting on coexact 1-forms. The spectrum for coclosed 1-forms is the same as the spectrum for coexact 1-forms except for the addition of zero with multiplicity $\dim(\mathcal H_1)$.
\end{subsection}
\begin{subsection}{Constructions}
We present two main examples of sequences of manifolds having a spectral gap for coexact 1-forms. The first is composed of a sequence of blocks $B_i$ glued together end to end. See panel 1 of \cref{fig:filling2}. The building block $B_i$ is chosen so that the map $H_1(B_i,\R) \to H_1(B_{i-1}\cup B_i \cup B_{i+1},\R)$ is the zero map.

Our second construction is given by gluing two homology handlebodies $E^-$ and $E^+$ by a large power of a mapping class. We choose the gluing so the resulting manifold is a rational homology sphere. A schematic picture of the resulting manifold is shown in panel 1 of \cref{fig:filling}. There is a long region in the middle homeomorphic to $\Sigma \times [-n,n]$.

\end{subsection}

\begin{subsection}{Stable isoperimetric ratio}\label{sec:isoperimetric}
We define the \emph{stable isoperimetric ratio} of a Riemannian 3-manifold $M$ as
$$\sup_{\gamma} \inf_{\partial \Sigma = \gamma^n} \frac{\text{Area}(\Sigma)}{n\, \text{length}(\gamma)}.$$ In this formula, $\gamma$ runs over all rationally null-homologous closed curves in $M$, and $\Sigma$ runs over all oriented surfaces which span some multiple cover of $\gamma$. Note that some authors refer to the reciprocal of this quantity as the isoperimetric ratio. The Cheeger-type theorems of Boulanger--Courtois and Lipnowski--Stern show that a gap in the spectrum for coexact 1-forms is related to an upper bound on the stable isoperimetric ratio. Before showing that our manifolds have a uniform spectral gap, let us explain why they have small stable isoperimetric ratios. Note that the constants in the comparison inequalities of Boulanger--Courtois and Lipnowski--Stern get worse as the volume or diameter of the manifold increases, so one cannot directly use their results to show that a growing sequence of manifolds has a uniform spectral gap.

Let $M$ be the first example we described, a manifold built out of a sequence of blocks $B_i$. Suppose $\gamma$ is a curve in $M$ as shown in the first panel of \cref{fig:filling2}. First, cut up $\gamma$ into several smaller closed curves, each of which is contained in one of the blocks $B_i$. With our choice of $B_i$, any closed curve in $B_i$ is null-homologous in $B_{i-1} \cup B_i \cup B_{i+1}$. So we may find a surface spanning each of the remaining curves. These surfaces each have controlled area because they live in a region spanning at most three blocks.

\begin{figure}[ht]
    \centering
\begingroup%
  \makeatletter%
  \providecommand\color[2][]{%
    \errmessage{(Inkscape) Color is used for the text in Inkscape, but the package 'color.sty' is not loaded}%
    \renewcommand\color[2][]{}%
  }%
  \providecommand\transparent[1]{%
    \errmessage{(Inkscape) Transparency is used (non-zero) for the text in Inkscape, but the package 'transparent.sty' is not loaded}%
    \renewcommand\transparent[1]{}%
  }%
  \providecommand\rotatebox[2]{#2}%
  \newcommand*\fsize{\dimexpr\f@size pt\relax}%
  \newcommand*\lineheight[1]{\fontsize{\fsize}{#1\fsize}\selectfont}%
  \ifx\svgwidth\undefined%
    \setlength{\unitlength}{323.64961399bp}%
    \ifx\svgscale\undefined%
      \relax%
    \else%
      \setlength{\unitlength}{\unitlength * \real{\svgscale}}%
    \fi%
  \else%
    \setlength{\unitlength}{\svgwidth}%
  \fi%
  \global\let\svgwidth\undefined%
  \global\let\svgscale\undefined%
  \makeatother%
  \begin{picture}(1,0.68469839)%
    \lineheight{1}%
    \setlength\tabcolsep{0pt}%
    \put(0,0){\includegraphics[width=\unitlength,page=1]{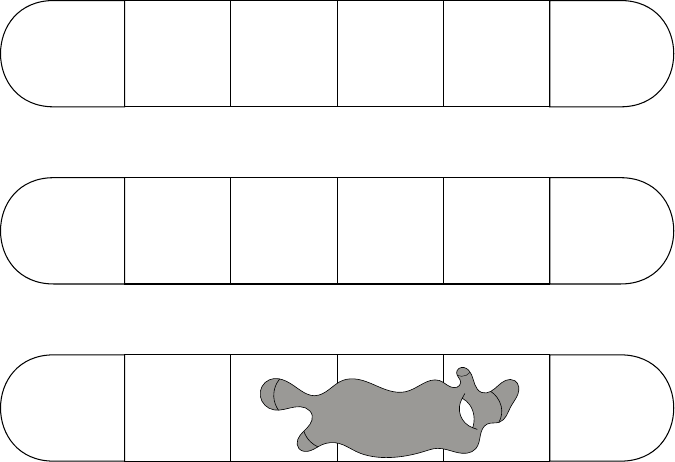}}%
    \put(0.10235835,0.48478289){\makebox(0,0)[t]{\lineheight{1.25}\smash{\begin{tabular}[t]{c}$B_0$\end{tabular}}}}%
    \put(0.25907266,0.48541024){\makebox(0,0)[t]{\lineheight{1.25}\smash{\begin{tabular}[t]{c}$B_1$\end{tabular}}}}%
    \put(0.73756371,0.483417){\makebox(0,0)[t]{\lineheight{1.25}\smash{\begin{tabular}[t]{c}$B_{n-1}$\end{tabular}}}}%
    \put(0.90767659,0.48446713){\makebox(0,0)[t]{\lineheight{1.25}\smash{\begin{tabular}[t]{c}$B_n$\end{tabular}}}}%
    \put(0,0){\includegraphics[width=\unitlength,page=2]{filling2.pdf}}%
  \end{picture}%
\endgroup%

    \caption{}
    \label{fig:filling2}
\end{figure}

Now let $M$ be the second example we described, sketched in \cref{fig:filling}. As in the previous example, we can cut up any closed curve $\alpha$ into smaller pieces, each of which is contained in $\Sigma \times [i-1,i+1]$ for some $i$. Since $\alpha$ is null-homologous, it can be written as a rational linear combination of two 1-cycles $\alpha^-$ and $\alpha^+$ such that $\alpha^+$ is null-homologous in $E^+$ and $\alpha^-$ is null-homologous in $E^-$. It appears as though a surface spanning $\alpha^-$ will have to have large area since it must reach all the way into $E^-$. 
However, for a generic mapping class, the length of the shortest representative of the rational homology class of $\alpha^-$ will shrink exponentially fast as we push it towards $E^-$. So we can construct a rational 2-chain of small area spanning $\alpha^-$ by a ``push and simplify'' procedure. Push $\alpha^-$ one block to the left, simplify $\alpha^-$ to the shortest representative in its homology class in $\Sigma\times (i,i+1)$, and repeat. Once $\alpha^-$ arrives at $E^-$, cap it off. By the exponential decay property, this procedure sweeps out a rational 2-chain spanning $\alpha^-$ whose area is proportional to $\length(\alpha^-)$. The same holds for $\alpha^+$. 

\begin{figure}[ht]
\centering
\begingroup%
  \makeatletter%
  \providecommand\color[2][]{%
    \errmessage{(Inkscape) Color is used for the text in Inkscape, but the package 'color.sty' is not loaded}%
    \renewcommand\color[2][]{}%
  }%
  \providecommand\transparent[1]{%
    \errmessage{(Inkscape) Transparency is used (non-zero) for the text in Inkscape, but the package 'transparent.sty' is not loaded}%
    \renewcommand\transparent[1]{}%
  }%
  \providecommand\rotatebox[2]{#2}%
  \newcommand*\fsize{\dimexpr\f@size pt\relax}%
  \newcommand*\lineheight[1]{\fontsize{\fsize}{#1\fsize}\selectfont}%
  \ifx\svgwidth\undefined%
    \setlength{\unitlength}{323.64961399bp}%
    \ifx\svgscale\undefined%
      \relax%
    \else%
      \setlength{\unitlength}{\unitlength * \real{\svgscale}}%
    \fi%
  \else%
    \setlength{\unitlength}{\svgwidth}%
  \fi%
  \global\let\svgwidth\undefined%
  \global\let\svgscale\undefined%
  \makeatother%
  \begin{picture}(1,0.68469839)%
    \lineheight{1}%
    \setlength\tabcolsep{0pt}%
    \put(0,0){\includegraphics[width=\unitlength,page=1]{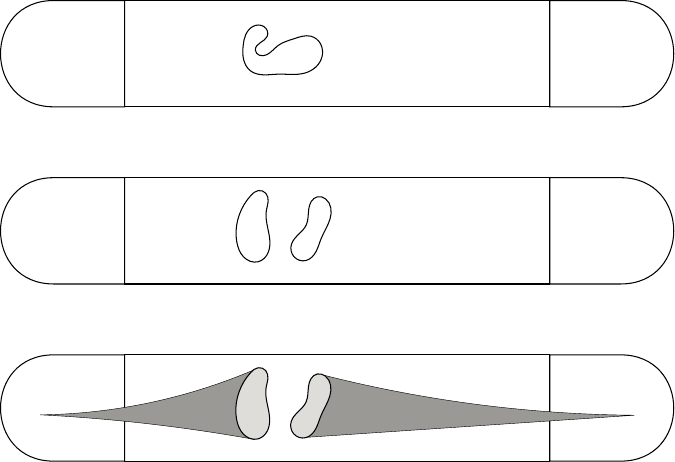}}%
    \put(0.10815544,0.48980724){\makebox(0,0)[t]{\lineheight{1.25}\smash{\begin{tabular}[t]{c}$E^-$\end{tabular}}}}%
    \put(0.34517565,0.3326036){\makebox(0,0)[rt]{\lineheight{1.25}\smash{\begin{tabular}[t]{r}$\alpha^-$\end{tabular}}}}%
    \put(0.4949864,0.33109237){\makebox(0,0)[lt]{\lineheight{1.25}\smash{\begin{tabular}[t]{l}$\alpha^+$\end{tabular}}}}%
    \put(0.49559307,0.48658459){\makebox(0,0)[t]{\lineheight{1.25}\smash{\begin{tabular}[t]{c}$\Sigma\times[-n,n]$\end{tabular}}}}%
    \put(0.89478205,0.49044411){\makebox(0,0)[t]{\lineheight{1.25}\smash{\begin{tabular}[t]{c}$E^+$\end{tabular}}}}%
    \put(0.49046388,0.59359888){\makebox(0,0)[lt]{\lineheight{1.25}\smash{\begin{tabular}[t]{l}$\alpha$\end{tabular}}}}%
  \end{picture}%
\endgroup%

\caption{}
\label{fig:filling}
\end{figure}

The proof that our manifolds have spectral gap will mimic the arguments just given, but $\gamma$ will be replaced by an exact 2-form.

\end{subsection}

\subsection{Some linear algebra}
Our analysis of the stable isoperimetric inequality above is predicated on exponential decay of certain homology classes in $H_1(\Sigma,\R)$. When can we guarantee that the lengths of the homology classes $\alpha^+$ and $\alpha^-$ decay exponentially quickly as we push them towards $E^-$ and $E^+$?

Let $\phi^{2N}$ be the mapping class used to glue $E^-$ and $E^+$. Let $A$ be the action of $\phi$ on $H_1(\Sigma,\R)$. {Assume that $A$ is a hyperbolic matrix, meaning that it has a $g$-dimensional contracting subspace spanned by $g$ eigenvectors with eigenvalues of norm less than 1, and a complementary $g$-dimensional expanding subspace spanned by $g$ eigenvectors with eigenvalues of norm greater than one.} Suppose $\Lambda^+$ and $\Lambda^-$ are two rank $g$ sublattices of $\Z^{2g}$ corresponding to the subspaces of $H_1(\Sigma,\Z)$ which are killed in $E^+$ and $E^-$ respectively. We can arrange that $\alpha^+ \in \Lambda^+\otimes \R$ and $\alpha^-\in \Lambda^-\otimes \R$. 

The condition of exponential decay is precisely that $\Lambda^+$ has zero intersection with the contracting subspace of $A$, and $\Lambda^-$ has zero intersection with the expanding subspace of $A$. The following lemma explains the connection between this condition and the size of the torsion homology of $M$. It is the basic piece of linear algebra underlying the proof of \cref{thm:2} and Corollary \ref{cor:22}.

\begin{lem}
    Let $A \in \GL_{2g}(\mathbb{Z})$ be a hyperbolic matrix. Let $\Lambda^{\pm} \subseteq \mathbb{Z}^{2g}$ be rank $g$ sublattices such that $\Lambda^+$ (resp. $\Lambda^-$) has zero intersection with the contracting (resp. expanding) subspace. Denote $\Lambda_N^{\pm} = A^{\pm N}\Lambda^{\pm}$. Then
    \begin{align*}
        \#\Big(\frac{\mathbb{Z}^{2g}}{\Lambda_N^+ + \Lambda_N^-}\Big)
    \end{align*}
    grows exponentially with $N$.
\end{lem}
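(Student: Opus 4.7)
My plan is to compute the index $[\mathbb{Z}^{2g} : \Lambda_N^+ + \Lambda_N^-]$ as the absolute value of a $2g \times 2g$ determinant, and then read off its exponential growth rate directly from the hyperbolic dynamics of $A$.

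First I would fix the stable/unstable decomposition $\R^{2g} = E^u \oplus E^s$, with $\dim E^u = \dim E^s = g$, and set $\mu := |\det(A|_{E^u})|$. Since $|\det A| = 1$, this forces $|\det(A|_{E^s})| = \mu^{-1}$, and $\mu > 1$. I would choose integral bases of $\Lambda^\pm$ and assemble them into $2g \times g$ integer matrices $V$ and $W$; in coordinates adapted to $E^u \oplus E^s$ these split into $g\times g$ blocks as $V = \binom{V_u}{V_s}$ and $W = \binom{W_u}{W_s}$. The hypothesis that $\Lambda^+ \otimes \R$ is complementary to $E^s$ is equivalent to the block $V_u$ being invertible, and similarly $W_s$ is invertible. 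Because $A^N(\Lambda^+ \otimes \R)$ converges to $E^u$ and $A^{-N}(\Lambda^- \otimes \R)$ converges to $E^s$ in the Grassmannian of $g$-planes, for all $N$ sufficiently large $\Lambda_N^+ + \Lambda_N^-$ is a full rank sublattice of $\mathbb{Z}^{2g}$, and its index equals $|\det(A^N V \mid A^{-N} W)|$.

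The key step is the asymptotic analysis of this determinant via a block manipulation. In the $E^u \oplus E^s$ basis the matrix factors as
\begin{equation*}
(A^N V \mid A^{-N} W) = \begin{pmatrix} A_u^N & 0 \\ 0 & A_s^N \end{pmatrix} \begin{pmatrix} V_u & A_u^{-2N} W_u \\ V_s & A_s^{-2N} W_s \end{pmatrix},
\end{equation*}
where the left factor has determinant of absolute value $1$. Multiplying the right factor on the right by $\operatorname{diag}(I_g, A_s^{2N})$ replaces its lower-right block by $W_s$ and its upper-right block by $A_u^{-2N} W_u A_s^{2N}$, which decays doubly exponentially in $N$ (since both $A_u^{-1}$ and $A_s$ contract). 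This operation multiplies the determinant by $(\det A_s)^{2N}$, so
\begin{equation*}
\bigl|\det(A^N V \mid A^{-N} W)\bigr| = \mu^{2N}\,\bigl|\det(V_u)\det(W_s)\bigr|\,(1 + o(1)),
\end{equation*}
which grows exponentially in $N$ at rate $\mu^2$.

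The main difficulty is really in seeing the right block decomposition: a naive approach that tries to analyze $\Lambda_N^+$ or $\Lambda_N^-$ alone will fail, because each is $\GL_{2g}(\mathbb{Z})$-equivalent to $\Lambda^\pm$ and so carries no information about $N$. The geometry lives entirely in how their sum sits inside $\mathbb{Z}^{2g}$. Changing coordinates to $E^u \oplus E^s$ is precisely what makes $\Lambda_N^+$ nearly supported on $E^u$ and $\Lambda_N^-$ nearly supported on $E^s$ for large $N$, so that the two contributions decouple. The identity $|\det A_u||\det A_s| = 1$ enforced by $A \in \GL_{2g}(\mathbb{Z})$ is exactly what turns this decoupling into a single clean exponential $\mu^{2N}$ rather than a product of two competing exponentials that might cancel, and the transversality hypotheses are what ensure the limiting prefactor $\det(V_u)\det(W_s)$ is nonzero.
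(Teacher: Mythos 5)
Your route is genuinely different from the paper's, and once repaired it works and even gives more. The paper never computes the index directly: it shows that every nonzero $v\in\Lambda_N^++\Lambda_N^-$ has exponentially large norm (uniform transversality of $A^{N}(\Lambda^+\otimes\R)$ and $A^{-N}(\Lambda^-\otimes\R)$, plus the fact that an integral vector transverse to the contracting subspace is stretched exponentially by $A^N$), which bounds the covolume, hence the index, from below. Your determinant computation instead identifies the exact rate $\mu^{2N}$, consistent with the explicit genus-2 example later in the paper where the torsion is $(\Z/A_{2N}\Z)^2$. Your reductions are sound: for large $N$ the $2g$ generators are $\R$-linearly independent, hence a $\Z$-basis of the sum, so the index is $|\det(A^NV\mid A^{-N}W)|$; and the transversality hypotheses do translate into invertibility of $V_u$ and $W_s$. (Passing to $E^u\oplus E^s$ coordinates costs a constant factor $|\det P|$ that you silently drop, but it is independent of $N$ and harmless.)

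However, the key block manipulation contains a genuine error. Right-multiplying $\begin{pmatrix} V_u & A_u^{-2N}W_u \\ V_s & A_s^{-2N}W_s\end{pmatrix}$ by $\mathrm{diag}(I_g, A_s^{2N})$ makes the lower-right block $A_s^{-2N}W_sA_s^{2N}$, \emph{not} $W_s$: $W_s$ need not commute with $A_s$, and this conjugate can itself blow up exponentially when $A_s$ has eigenvalues of distinct moduli (entries scale like $(\lambda_j/\lambda_i)^{2N}$), so the claimed limiting matrix, and hence the $(1+o(1))$, is not justified as written. (Also, the upper-right block $A_u^{-2N}W_uA_s^{2N}$ decays exponentially at the combined contraction rate, not ``doubly exponentially.'') Two standard fixes: multiply instead by $\mathrm{diag}(I_g,\,W_s^{-1}A_s^{2N}W_s)$, which has the same determinant $\det(A_s)^{2N}$ but now genuinely yields lower-right block $W_s$ and upper-right block $A_u^{-2N}W_u W_s^{-1}A_s^{2N}W_s\to 0$; or skip the manipulation and use the Schur complement: with $P=A_u^NV_u$, $Q=A_u^{-N}W_u$, $R=A_s^NV_s$, $S=A_s^{-N}W_s$, one has $\det = \det(P)\,\det\!\left(S-RP^{-1}Q\right)$, and $\|S^{-1}RP^{-1}Q\|\to 0$ exponentially since $P^{-1}$, $Q$, $R$, $S^{-1}$ are all exponentially small in norm, giving exactly your asymptotic $\mu^{2N}\,|\det V_u\,\det W_s|\,(1+o(1))$ up to the coordinate-change constant. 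With either repair your argument is complete and sharper than the paper's, at the cost of being less soft.
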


\begin{proof}
    Let $v \in \Lambda_N^+ + \Lambda_N^-$ be nonzero. It suffices to show that $|v|$ is exponentially large. Write $v = v_N^+ + v_N^-$ with $v_N^{\pm} \in \Lambda_N^{\pm}$. Denote $V_N^{\pm} = \Lambda_N^{\pm} \otimes \mathbb{R}$. Then $V_N^+$ converges to the expanding subspace, and $V_N^-$ converges to the contracting subspace, so $V_N^+$ and $V_N^-$ are transverse uniformly in $N$. Therefore $|v| \gtrsim |v_N^{\pm}|$.

    One of $v_N^{\pm}$ is nonzero, say $v_N^+$. Write $v_N^+ = A^N u^+$ with $u^+ \in \Lambda^+$ (note $u^+$ depends on $N$). Since $u^+$ is integral, it has norm $\geq 1$. In addition, $u^+$ is transverse to the contracting subspace uniformly in $N$. It follows that $|A^N u^+|$ grows exponentially in $N$. Now
    \begin{align*}
        |v|
        \gtrsim |v_N^+|
        = |A^N u^+|.
    \end{align*}
    We conclude that $|v|$ is exponentially large, as desired.
\end{proof}

\begin{subsection}{Random methods}
In the setting of surfaces or graphs, a spectral gap for functions is a \emph{generic} property. By this we mean that for many random models, a random graph or surface is an expander with high probability. We don't know a model for random 3-manifolds which produces manifolds having a uniform spectral gap for coexact 1-forms. 

The Dunfield--Thurston model produces a Heegaard splitting by gluing together two handlebodies by a random word in the mapping class group of a surface \cite{dunfield_finite_2006}. A random word of length $N$ has subsequences of length $\sim \log(N)$ which act trivially on  homology, and the techniques of \cref{thm:torsion_analytic} show that the resulting hyperbolic 3-manifold has spectral gap for coexact 1-forms going to zero.

Petri and Raimbault suggested another model of random hyperbolic 3-manifolds: they randomly glue together $N$ truncated tetrahedra, and then double the resulting manifold with boundary \cite{petri_model_2022}. While these manifolds likely have spectral gap for functions, they fail to have uniform spectral gap for coexact 1-forms because their injectivity radii tend to zero. Moreover, these manifolds are never rational homology spheres.

The failure of random methods to produce examples of higher expanders is well known in the context of simplicial complexes. See Section 4 of \cite{lubotzky_high_2017} for an overview.
\end{subsection}
\begin{subsection}{Injectivity radius}\label{sec.injrad}
An obstruction to obtaining a uniform spectral gap for coexact 1-forms is that the manifolds $M_i$ we construct must have uniform 2-sided bounds on the injectivity radius at any point. Indeed, if the injectivity radius goes to infinity or to zero along a sequence of points $p_i\in M_i$, then as discussed in \cite[Proposition 5.1]{ll22}, one may construct explicit coexact 1-forms on $M_i$ with Rayleigh quotient tending to zero. Thus any such sequence, e.g. congruence towers or more generally sequences which Benjamini--Schramm converge to $\mathbb{H}^3$, are doomed to fail. This obstruction is not present in the case of the function spectrum of hyperbolic surfaces or graphs where the random objects do indeed Benjamini--Schramm converge to their corresponding symmetric space.
    
\end{subsection}

\subsection{Relation to~\texorpdfstring{\cite{lott}}{lott}} We expect that one could prove \cref{thm:11} using Lott's computation of the $L^2$ cohomology of tame hyperbolic 3-manifolds with degenerate ends. Suppose a sequence of pointed homology $3$-spheres $(M_k,p_k)$ converges to an infinite volume limit $(M,p)$ with a single degenerate end $\Sigma \times [0,\infty)$. If the reduced $L^2$-cohomology of $M$ is non-trivial (that is, $\overline{H}^1_{(2)}(M) := \Ker(d)/\overline{\im(d)} \neq \{0\}$), then, by a cut-off procedure, we should see that the coexact $1$-form spectral gap of the $M_k$ must tend to $0$. Contrapositively, if the ends of a ``cigar" construction had vanishing $L^2$-cohomology and looked like Lott's doubly-degenerate example in the `middle' (see Section 4 of~\cite{lott}), then the $M_k$ would have a uniform coexact $1$-form gap.  

To compute the reduced $L^2$-cohomology in our case, we use the exact sequence in Proposition 21 of~\cite{lott}:
\begin{align*}
    0 \to \im(H^1(K,\partial K;\mathbb{R}) \to H^1(K;\mathbb{R})) \to \overline{H}^1_{(2)}(M) \to L_1\cap L_2 \to 0
\end{align*}
where $L_1$ and $L_2$ are two Lagrangian subspaces of $H^1(\Sigma;\mathbb{R})$ defined as
\begin{align*}
    L_1 &:= \im(H^1(K;\mathbb{R})\to H^1(\Sigma;\mathbb{R}))\,,\\
    L_2 &:= \{h\in H^1(\Sigma;\mathbb{R}) : \int_0^\infty \langle h,h \rangle_{t} \,dt <\infty\},
\end{align*}
where the bilinear form is as in equation (6.6) of \cite{lott}. This definition of $L_2$ follows from the remark just before Proposition 16 in \cite{lott}.

In our case, $K$ is an end cap of the homology spheres and $\partial K = \Sigma$, and so we have $\im(H^1(K,\partial K;\mathbb{R}) \to H^1(K;\mathbb{R})) = \{0\}$. Thus we get that $\overline{H}^1_{(2)}(M) \simeq L_1\cap L_2$. From the definition of $L_2$ and equation (7.3) of \cite{lott}, $L_2$ is the contracting subspace for the action of $\phi^*$ on $H^1(\Sigma;\mathbb{R})$ in the case of a mapping cylinder. We note that $L_1\cap L_2 = \emptyset$ is one of the conditions we need (condition (v) of Section~\ref{sec.pf:11}) on a sequence of manifolds for it to have a uniform gap. 

\end{section}
\begin{section}{Preliminaries on hyperbolic 3-manifolds}\label{sec.prelim}

\subsection{Kleinian Groups}
A Kleinian group $\Gamma$ is a discrete subgroup of $\PSL_2(\mathbb C)$, the group of orientation preserving isometries of $\mathbb{H}^3$. Any complete hyperbolic 3-manifold $M$ can be written as	$  \mathbb{H}^3 / \Gamma $ for some Kleinian group $\Gamma$.  The Kleinian group $\Gamma$ can be viewed as the image of a discrete and faithful representation from $\pi_1(M)$ to $\PSL_2(\mathbb{C})$.

	
	The sequence $\Gamma_i$ converges \textit{geometrically} to $\Gamma$ if there are choices of basepoints $p_i$ for $\mathbb{H}^3 / \Gamma_i$ and $p$ for $\mathbb{H}^3 / \Gamma$, and a sequence of balls $B_i \subset \mathbb{H}^3$ that exhausts $\mathbb{H}^3$, such that the center of $B_i$ projects to the basepoint $p_i$, and each $B_i/\Gamma_i$ can be mapped $k_i$-quasi-isometrically onto a subspace of $\mathbb{H}^3/\Gamma$ by differentiable maps $F_i$, with  $k_i \rightarrow 1$ as $i\rightarrow \infty$ and $F_i(p_i)=p$ for all $i$.   We point out that geometric convergence of $\Gamma_i$ implies  pointed Gromov--Hausdorff convergence of $\Gamma_i \backslash \mathbb{H}$ for some choice of basepoints (see \cite[Section 3]{bs11}.) In what follows the sequences of hyperbolic 3-manifolds that we consider will have a uniform lower bound for their injectivity radii, and the following proposition will be useful. 
 
 \begin{prop}[{\cite[Corollary 3.3]{bs11}}] \label{geometriccompactness}
Fix $\epsilon>0$. Then every sequence of closed, pointed hyperbolic 3-manifolds $(M,p)$ with injectivity radius at $p$ greater than $\epsilon$  has a subsequence that converges geometrically.   
 \end{prop}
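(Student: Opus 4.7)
The plan is to pass to the universal cover and invoke a Chabauty-type compactness argument on the level of Kleinian groups, then translate the resulting convergence of discrete subgroups into the bilipschitz quasi-isometries required by the definition of geometric convergence given earlier in the section.

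\textbf{Setup.} I would write $M_i = \H^3/\Gamma_i$ and lift each basepoint $p_i$ to some $\tilde p_i \in \H^3$. Since conjugating $\Gamma_i$ by an isometry of $\H^3$ does not change the quotient, I may assume that all $\tilde p_i$ coincide with a single fixed basepoint $o \in \H^3$. Under this normalization the hypothesis $\inj_{p_i}(M_i) \geq \epsilon$ becomes the uniform systole condition
\[
d(o,\gamma \cdot o) \geq 2\epsilon \qquad \text{for every } \gamma \in \Gamma_i \setminus \{1\}.
\]

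\textbf{Extracting a limit group.} Because each orbit $\Gamma_i \cdot o \subset \H^3$ is $2\epsilon$-separated, a hyperbolic ball-packing estimate shows that for every $R>0$ the cardinality of $\{\gamma \in \Gamma_i : d(o,\gamma\cdot o)\leq R\}$ is bounded by a constant depending only on $R$ and $\epsilon$. Passing to a subsequence and diagonalizing over $R$, I can arrange that these finite subsets of $\PSL_2(\C)$ converge elementwise. Their union yields a set $\Gamma_\infty \subset \PSL_2(\C)$, and one checks (i) that $\Gamma_\infty$ is a subgroup, using continuity of multiplication together with the uniform discreteness to rule out degenerate accumulation, and (ii) that the systole bound passes to the limit, so $\Gamma_\infty$ is itself a discrete subgroup satisfying $d(o,\gamma\cdot o) \geq 2\epsilon$ for $\gamma \neq 1$. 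Set $M_\infty := \H^3/\Gamma_\infty$ with basepoint $p_\infty$ the image of $o$; this is a complete hyperbolic $3$-manifold with $\inj_{p_\infty}(M_\infty) \geq \epsilon$.

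\textbf{Matching the quotients.} To produce the maps $F_i$ required by the definition of geometric convergence, fix $R>0$ and work inside $B_R(o)\subset \H^3$. By construction, for large $i$ every element of $\Gamma_\infty$ that moves $o$ a distance at most $R$ is approximated to arbitrary accuracy by a unique element of $\Gamma_i$, and the uniform $2\epsilon$-discreteness forbids ``extra'' elements of $\Gamma_i$ from accumulating inside $B_R(o)$. Consequently, fundamental domains for $\Gamma_i$ and $\Gamma_\infty$ restricted to $B_R(o)$ align up to an error that tends to zero as $i \to \infty$, yielding a differentiable map $F_i : B_R(o)/\Gamma_i \to M_\infty$ that is injective and $k_i(R)$-bilipschitz with $k_i(R) \to 1$. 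Choosing $R=R_i\to\infty$ slowly enough along the subsequence supplies exhausting balls $B_i \subset \H^3$ centered at $o$ together with maps $F_i$ with $k_i \to 1$ and $F_i(p_i)=p_\infty$, matching the definition of geometric convergence verbatim.

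\textbf{Main obstacle.} The substantive work is in the last step: verifying that the local bilipschitz identifications of generating group elements assemble into a globally well-defined, injective map on $B_R(o)/\Gamma_i$ whose distortion can be controlled uniformly. Uniform discreteness is exactly what prevents short loops from being collapsed or created in the limit, and is what makes geometric and Chabauty convergence coincide in this regime; once this matching is in place, the remainder is bookkeeping with fundamental domains and the definition.
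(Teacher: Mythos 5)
Your argument is correct and is the standard Chabauty-compactness proof of this statement; the paper itself does not prove the proposition but simply cites it from Biringer--Souto \cite[Corollary 3.3]{bs11}, where it is established by essentially the same route (normalize basepoints, use the $2\epsilon$-separation of the orbit of $o$ to extract a discrete limit group by a diagonal argument, then convert Chabauty convergence into the almost-isometric maps on exhausting balls). The only detail worth adding to your sketch is that the limit group is torsion-free (so that $\H^3/\Gamma_\infty$ is a manifold), which follows from the same displacement bound: an elliptic element of order $n$ in the limit would be approximated by $\gamma_i \in \Gamma_i\setminus\{1\}$ with $\gamma_i^n \to 1$, contradicting $d(o,\gamma_i^n\cdot o)\geq 2\epsilon$.
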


 A 3-manifold is \textit{tame} if it is homeomorphic to the interior of a compact 3-manifold (possibly with boundary.)  The \textit{rank} of a manifold is the minimal number of generators for its fundamental group. Agol and Calegari--Gabai proved Marden's conjecture that a hyperbolic 3-manifold with bounded rank is tame \cite{agol2004tameness}, \cite{cgtameness}.  Although we will not directly use this fact, we comment that a geometric limit of a sequence of pointed hyperbolic 3-manifolds with a lower bound for their injectivity radius and uniformly bounded rank is tame.  This can be shown using Proposition \ref{geometriccompactness} and \cite[Theorem 14.4]{bs23}.

\begin{subsection}{Gluing theorems for hyperbolic 3-manifolds}\label{sec.gluing}

Effective control on the geometry of hyperbolic 3-manifolds built out of finitely many pieces glued together will be important in what follows. This is provided by work by Brock--Minsky--Namazi--Souto \cite{bmns16}, which we summarize now.

Let $\Sigma$ be a compact surface, possibly with boundary.   The \textit{curve graph} of $\Sigma$ is a graph whose vertices are non-peripheral and homotopically non-trivial isotopy classes of essential loops in $\Sigma$, and whose edges join pairs of isotopy classes that have disjoint representatives.  The curve graph of $\Sigma$ is connected, and the distance  between two vertices in the curve graph is simply the smallest number of edges on a path joining them.  

A \textit{marking} $\mu$ on a surface $\Sigma$ is a set of isotopy classes of essential simple loops on $\Sigma$, along with a choice of at most one \textit{transversal} for each element of $\mu$ {which is disjoint from any other element in $\mu$}.  A \textit{transversal} for a loop $\alpha \in \mu$ is an essential simple loop $\beta$ which either intersects $\alpha$ once, or twice and the regular neighborhood of $\alpha \cup \beta$ is homeomorphic to a 4-holed sphere.  A \textit{complete marking} is one that is maximal, in the sense that the curves in the marking give a pants decomposition of $\Sigma$, and there is a transveral for every element of the marking.  

A \textit{decorated 3-manifold} is a compact  oriented 3-manifold $M$ whose boundary components each have genus at least 2  and are equipped with a complete marking. For a finite collection $\mathcal{M}$ of decorated 3-manifolds, an $\mathcal{M}$\textit{-gluing} is a 3-manifold $X$ obtained by gluing boundary components of copies of elements of $\mathcal{M}$.  Each boundary pairing of marked boundary components $(\Sigma_1,\mu_1)$ and $(\Sigma_2,\mu_2)$  is specified by an orientation reversing homeomorphism between $\Sigma_1$ and $\Sigma_2$.  

Given a boundary pairing $\Phi:\Sigma_1\to \Sigma_2$, we define the marking $\Phi_*(\mu_1)$ to be the marking on $\Sigma_2$ consisting of the images of the loops in $\mu_1$ under $\Phi$. The \textit{height} of $\Phi$ is defined to be the least distance between a curve in $\Phi_*(\mu_1)$ and a curve in $\mu_2$, where distance is measured in the curve graph of $\Sigma_2$. Brock--Minsky--Namazi--Souto also introduce a condition called $R$\textit{-bounded-combinatorics}, which is a restriction on the complexity of the gluing maps \cite[Section 2.3]{bmns16}. The bounded combinatorics condition has two parts, one designed to guarantee that the closed geodesics corresponding to the gluing mapping classes stay in the thick part of moduli space, and another designed to ensure that boundaries of compressing disks are not too close to the stable foliations of the gluing maps. Since the decorated manifolds we glue together will have incompressible boundary, only the former condition will matter for us. 

For each $\mathcal{M}$-gluing $X$ with $R$-bounded combinatorics, there is a $\textit{model metric}$, denoted by $\mathcal{M}_X$.  This metric is built from a fixed metric on each element of $\mathcal{M}$ together with metrics on the product regions interpolating between paired boundary components. The metric on each manifold in $\mathcal{M}$ is chosen once and for all so that the induced metric on each of its boundary components is a hyperbolic metric.  Besides that, there are no conditions it needs to satisfy. For a boundary pairing $\Phi: \Sigma_1 \to \Sigma_2$, let $g_1$ and $g_2$ be the model metrics on $\Sigma_1$ and $\Sigma_2$ respectively. Let $\gamma:[0,L] \to \text{Teich}(\Sigma_1)$ be the unit speed parameterization of the geodesic in Teichm\"uller space in the Teichm\"uller metric between $g_1$ and $\Phi^*(g_2)$.  Let $g_\Phi$ be the metric on $\Sigma \times [0,L]$ defined by the formula $$g_\Phi(x,t) = dt^2 + \gamma(t)(x).$$ The model metric on $X$ is obtained by gluing the surface $\Sigma \times 0\subset \Sigma \times [0,L]$ to $\Sigma_1$ via the identity map and $\Sigma \times L$ to $\Sigma_2$ by $\Phi$. By construction, both of these gluings are isometries. Finally, we smooth the corners and call the resulting metric on $\mathcal M_X$ the model metric.

The main theorem of \cite{bmns16} is the following: 

\begin{thm} [{\cite[Theorem 8.1]{bmns16}}] \label{bmnsmainthm}
    Let $\mathcal{M}$ be a finite collection of decorated  irreducible, atoroidal 3-manifolds $M$ whose fundamental groups are nonabelian, and fix $R>0$.  Then there exist $D$ and $K$ such that, for any $\mathcal{M}$-gluing $X$ with $R$-bounded combinatorics and all heights greater than $D$, $X$ admits a unique hyperbolic metric $\sigma$.  Moreover, there exists a $K$-bilipschitz diffeomorphism from the model $\mathcal{M}_X$ to $(X,\sigma)$ in the correct isotopy class, whose image is the complement of the rank 2 cusps in $X$.
 \end{thm}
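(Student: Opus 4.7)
My plan is to split the theorem into three parts: topological existence of a hyperbolic structure, uniqueness, and the bilipschitz comparison (the hard part). For existence, I would first show that if the heights of all the gluings are large enough and the combinatorics is $R$-bounded, then $X$ is irreducible and atoroidal: any essential sphere or torus would have to cross some gluing surface $\Sigma$, and using the high-height hypothesis and curve-graph distance lower bounds one can show it can be isotoped off, which together with the hypothesis that each block is irreducible and atoroidal gives the claim. Geometrization (Thurston in the Haken case, Perelman in general) then produces a hyperbolic metric $\sigma$, and Mostow-Prasad rigidity gives uniqueness.

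The bilipschitz comparison is the main obstacle and is where the bulk of the work lies. I would argue by contradiction: assume there exist $\mathcal{M}$-gluings $X_n$ of $R$-bounded combinatorics with minimal height going to $\infty$ and hyperbolic metrics $\sigma_n$, such that no $K_n$-bilipschitz diffeomorphism from $\mathcal{M}_{X_n}$ to $(X_n,\sigma_n)$ in the correct isotopy class exists for any $K_n$ growing slowly. One then wants to extract a geometric limit based at a point where the failure of the bilipschitz comparison is witnessed. By the $R$-bounded combinatorics hypothesis, the injectivity radius is bounded below on the relevant portion of each $\mathcal M_{X_n}$, and one shows (using standard Margulis-lemma and pleated-surface arguments) that the same holds for $(X_n,\sigma_n)$; this gives compactness and produces geometric limits.

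The geometric limits are of a few standard types: finitely glued pieces (when the basepoint stays in a bounded neighbourhood of some block), surface-group ends corresponding to limits of Teichmüller-geodesic interpolations (singly or doubly degenerate), or quasi-Fuchsian manifolds arising from Teichmüller-like excursions. In each case the limit is a hyperbolic 3-manifold with prescribed end invariants: for a doubly-degenerate limit the ending laminations are the stable/unstable laminations determined by the gluing data, and for a singly-degenerate limit one end is compact. The solution of the Ending Lamination Conjecture (Brock-Canary-Minsky, Minsky) then identifies the geometry of each such limit up to bilipschitz equivalence with a model built out of the same Teichm\"uller-geodesic data that defines $\mathcal M_X$. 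Transporting this bilipschitz equivalence back to $(X_n,\sigma_n)$ via the geometric convergence contradicts the failure of the bilipschitz comparison, giving the theorem.

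The two main technical obstacles I expect are: (i) verifying that each geometric limit really does match the local model metric and not some other end invariant, which requires tracking how the gluing mapping classes determine the asymptotic Teichm\"uller behaviour and in particular rules out accidental parabolics using the bounded-combinatorics assumption; and (ii) patching together the local bilipschitz comparisons on overlapping blocks and product regions into a single globally defined diffeomorphism in the correct isotopy class. The first obstacle is handled via the continuity of end invariants under geometric limits together with the rigidity part of the ELC; the second is a standard partition-of-unity / isotopy extension argument once one has bilipschitz control on the overlaps, using the fact that the number of blocks meeting any fixed region in $\mathcal M_X$ is uniformly bounded.
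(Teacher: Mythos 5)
This statement is not proved in the paper at all: it is quoted verbatim as Theorem 8.1 of Brock--Minsky--Namazi--Souto \cite{bmns16} and used as a black box, so there is no internal proof to compare your sketch against. The relevant comparison is therefore with the original BMNS argument. Your outline is in the right spirit --- their proof does rely on bounded combinatorics forcing the interpolating Teichm\"uller geodesics into the thick part, on geometric limits, and on the Ending Lamination Theorem and Minsky's bounded-geometry models to identify those limits --- but as a proof it has genuine gaps that are exactly where the content of \cite{bmns16} lies.

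Concretely: (i) your existence step (``any essential sphere or torus can be isotoped off the gluing surfaces'') is not routine, because $\mathcal{M}$-gluings are allowed to include pieces with compressible boundary (handlebodies); there irreducibility and atoroidality require the disc-set part of the $R$-bounded-combinatorics condition and Hempel-distance--type arguments, not just large heights. (ii) In your compactness argument you assert that the hyperbolic manifolds $(X_n,\sigma_n)$ have injectivity radius bounded below on the relevant regions ``by standard Margulis-lemma and pleated-surface arguments''; but a uniform two-sided control on the geometry of $(X_n,\sigma_n)$ in terms of the combinatorial data is essentially the conclusion being proven, and establishing it (ruling out unexpectedly short geodesics and accidental parabolics in limits, and identifying the end invariants of singly/doubly degenerate limits with the laminations coming from the gluing data) is the technical heart of BMNS, requiring their uniform model machinery rather than a soft limit argument. (iii) Finally, the statement you need is stronger than ``each limit is bilipschitz to a model'': you need a single $K$-bilipschitz diffeomorphism of the whole manifold, in the correct isotopy class, with image the complement of the rank-two cusps, and the globalization/isotopy-class bookkeeping is not a mere partition-of-unity argument. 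For the purposes of this paper you should treat the theorem as an external input, as the authors do.
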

 
The manifolds we construct in this paper will not have cusps, so the image of the $K$-bilipschitz diffeomorphism given by the previous theorem will be all of $X$. The notion of $R$-bounded combinatorics simplifies greatly in the case of incompressible boundary components: 

\begin{prop}\label{prop:rbounded}
    Suppose that all of the manifolds in $\mathcal M$ have incompressible boundary and none are $I$-bundles over a surface. 
    Suppose $\Phi:\Sigma_1\to\Sigma_2$ is a pairing between boundary components of two of the decorated manifolds in $\mathcal M$. Let $\phi:\Sigma_1 \to \Sigma_1$ be a mapping class. Then the sequence of boundary pairings $\Phi\circ \phi^i$ has $R$-bounded combinatorics for some uniform $R>0$ independent of $i$. If $\phi$ is pseudo-Anosov, then the heights of $\Phi\circ\phi^i$ go to infinity as $i \to \infty$.
\end{prop}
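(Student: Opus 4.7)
The plan is to verify the definition of $R$-bounded combinatorics from~\cite[Section 2.3]{bmns16} for the family $\{\Phi \circ \phi^i\}_i$. This definition has two parts: a subsurface-projection condition on the boundary markings of each pairing, and a disk-set condition comparing the markings to boundaries of compressing disks. The incompressibility hypothesis immediately kills the disk-set condition (all disk sets are empty), so the work is in checking the subsurface-projection condition.

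Transporting to $\Sigma_1$ via the fixed map $\Phi$, the quantity to control is $d_Y\bigl(\mu_1,\, \phi^{-i}_* \Phi^{-1}_*(\mu_2)\bigr)$ for each essential proper subsurface $Y$ entering the BMNS definition. Since $\mathcal{M}$ is finite and $\Phi, \phi$ are fixed, the task reduces to showing that $d_Y(\mu_1, \phi^{-i}_*(\beta))$ is uniformly bounded over $i$ for any fixed curve $\beta$ and any non-annular essential proper $Y$. When $\phi$ is pseudo-Anosov, the iterates $\phi^{-i}(\beta)$ converge to the unstable lamination, which fills $\Sigma_1$ and hence has bounded projection to every proper subsurface, bounding the entire orbit's projection uniformly. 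For general $\phi$, one uses the canonical reduction system: on non-annular subsurfaces, $\phi$-invariant curves yield fixed points in $\mathcal{C}(Y)$, so iterates of $\beta$ have bounded $Y$-projection by the triangle inequality. The no-$I$-bundle hypothesis ensures that annular projections --- which genuinely grow linearly under Dehn twists --- are excluded from the BMNS condition, so we extract a uniform $R$ depending only on $\mathcal{M}$, $\Phi$, and $\phi$.

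For the second claim, suppose $\phi$ is pseudo-Anosov. By the Masur--Minsky theorem, $\phi$ acts loxodromically on the curve complex $\mathcal{C}(\Sigma_1)$ with positive translation length. The height of $\Phi \circ \phi^i$ is
\[
d_{\mathcal{C}(\Sigma_2)}\bigl((\Phi \circ \phi^i)_*(\mu_1),\, \mu_2\bigr) \;=\; d_{\mathcal{C}(\Sigma_1)}\bigl(\phi^i_*(\mu_1),\, \Phi^{-1}_*(\mu_2)\bigr),
\]
which grows linearly in $i$ by the quasi-geodesic orbit property. Hence the heights tend to infinity as claimed.

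The main obstacle I anticipate is justifying the uniform subsurface-projection bound for reducible $\phi$: a Dehn twist around $\gamma$ produces linear growth of the annular projection about $\gamma$, so the claim would fail if annular subsurfaces were included in the BMNS condition. The resolution relies on the precise form of BMNS's definition, which under our hypotheses restricts attention to ``block-type'' non-annular subsurfaces where the pseudo-Anosov / reduction-system argument above applies.
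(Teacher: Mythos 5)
The serious gap is in the bounded-combinatorics half of your argument. Even in the pseudo-Anosov case your justification is a non sequitur: that the unstable lamination fills $\Sigma_1$ only says its projection to every proper subsurface is nonempty, not that the coefficients $d_Y(\mu_1,\phi^{-i}_*\Phi^{-1}_*(\mu_2))$ are uniformly bounded over all $Y$ and $i$ --- there are filling laminations with arbitrarily large subsurface coefficients against a fixed marking, so ``filling'' by itself bounds nothing. What actually gives the uniform bound, and what the paper's proof invokes, is that the Teichm\"uller axis of a pseudo-Anosov $\phi$ projects to a closed (hence compact) geodesic in moduli space, so it lies in an $\epsilon$-thick part with $\epsilon$ depending only on $\phi$, independent of $i$; by Rafi's and Minsky's characterization of thick geodesics (equivalently, by comparing $\mu_1$ with short markings of points along the axis and using that two fixed markings have uniformly bounded coefficients), every subsurface coefficient --- annular ones included --- between $\phi^i_*\mu_1$ and $\Phi^{-1}_*\mu_2$ is bounded independently of $i$.

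Your resolution of the ``main obstacle'' is also incorrect, and the general-mapping-class case cannot be rescued this way. The BMNS bounded-combinatorics condition does include annular subsurfaces; those bounds are precisely what keep the model metric uniformly thick (injectivity radius bounded below), and the no-$I$-bundle hypothesis is a hypothesis of the BMNS gluing theorem, not a device that strikes annuli from the definition. So if $\phi$ is a Dehn twist about $\gamma$, the annular coefficient at $\gamma$ grows linearly in $i$ and no uniform $R$ exists; and even away from annuli, if $\phi$ is reducible with a pseudo-Anosov piece supported on a component $Y_0$ of the complement of its canonical reduction system, then $d_{Y_0}(\mu_1,\phi^i_*\mu_1)$ grows linearly, and your triangle-inequality argument via invariant curves says nothing there, since the reduction curves have empty projection to $Y_0$. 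The proposition is only used (and its proof in the paper, which speaks of ``the closed Teichm\"uller geodesic corresponding to $\phi^i$,'' only makes sense) for pseudo-Anosov $\phi$. Your treatment of the height statement via the loxodromic action on the curve graph, and the observation that incompressibility trivializes the disk-set condition, do agree with the paper.
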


\begin{proof}
The closed Teichmuller geodesic corresponding to the mapping class $\phi^i$ is contained in the $\epsilon$-thick part of moduli space for some $\epsilon$ independent of $i$. It follows that the first condition of $R$-bounded combinatorics holds for some $R$ independent of $i$. Since we are gluing along incompressible surfaces, the second condition of $R$-bounded combinatorics relating to compressing disks is trivially satisfied. That heights tend to infinity is a consequence of the fact that the pseudo-Anosov map $\phi$ acts as a hyperbolic isometry on the curve graph, which is $\delta$-hyperbolic (see \cite[Sections 2.1,2.3]{mah10}.)  
\end{proof}

In this paper, we will often choose $\phi$ to be a pseudo-Anosov mapping class acting trivially on homology. Then by taking $i$ large, we will choose the gluing heights as large as necessary for \cref{bmnsmainthm} above to apply.

\end{subsection}

\end{section}

\begin{section}{Construction}\label{sec:construction}

\begin{subsection}{Constructing hyperbolic homology sums of handlebodies}
Say that a compact orientable 3-manifold $B$ is a \emph{hyperbolic homology sum of $n$ handlebodies}, or an \emph{HHH}, if 
\begin{enumerate}
	\item $B$ is irreducible, atoroidal, and has incompressible boundary. \textit{We do however allow essential annuli.}  That is to say, we do not require $B$ to be acylindrical.  
	\item {$\partial B \cong \Sigma_1 \sqcup \dots \sqcup \Sigma_n$ where each $\Sigma_i$ is a surface of fixed genus $g\geq 2$.} 
	\item $B$ is integer homology equivalent to the connect sum of $n$ handlebodies. In particular, $H^1(B,\Z) \cong \bigoplus_i L_i$, where $L_i$ is a half dimensional subspace of $H^1(\Sigma_i, \Z)$.
\end{enumerate}
We refer the reader to \cite{bonsurvey} for facts about JSJ decompositions, geometrization theorems, and relations between them that we will freely use in what follows. The goal of this section is to construct such a manifold for any $g\geq 2$, $n\geq 1$. Here is our strategy. Start with the connect sum of $n$ genus $g$ handlebodies. Choose a complicated null-homologous hyperbolic knot and do a long Dehn surgery along this knot. A well chosen Dehn surgery doesn't change the integer homology, but it makes the 3-manifold hyperbolic.

Let us now construct an explicit example. First note that the connected sum of $n$ handlebodies is homeomorphic to the manifold obtained by removing $n$ standard solid handlebodies from $S^3$. Let $K$ be a hyperbolic knot in $S^3$. Let $F_g$ be the free group on $g$ generators. Find $n$ injections $\phi_1,\dots ,\phi_n$ of $F_{g}$ into the commutator subgroup of $\pi_1(S^3 \setminus K)$. We do not require that the injections have disjoint images. This can be done using the ping-pong lemma, which says that if $g,h$ are two hyperbolic elements of $PSL_2(\C)$ which do not commute, then for $k$ sufficiently large, $g^k$ and $h^k$ generate a free group. This locates a free group of rank 2 in the commutator subgroup of $\pi_1(S^3 \setminus K)$, which in turn contains a free group on $g$ generators. 

Let $W_i$, $1\leq i \leq n$ be wedge sums of $g$ circles. Choose embeddings of $W_i$ in $S^3 \setminus K$ realizing the homomorphism $\phi_i:\pi_1(W_i) \to \pi_1(S^3\setminus K)$. We can arrange that after ignoring $K$, the image of $\cup_i W_i$ in $S^3$ is isotopic to the standard unlinked embedding of $n$ wedges of $g$ circles in $S^3$. One way to achieve this begins with an ordering on the $ng$ loops in $\cup_i W_i$. Then one draws $W_1\cup \dots \cup W_n \cup K$ as a knot diagram in the plane, and changes the crossings between loops so that loops earlier in the ordering always pass above loops later in the ordering. During this procedure, we do not change any of the crossings involving $K$, so we do not change the homomorphisms $\phi_i$. However, we unknot and unlink the $W_i$'s.

\newcommand{\G}{\overline{W}}

Let $\G_i$ be a handlebody obtained as a tubular neighbourhood of $W_i$. Let $B^\circ= S^3 - (K \cup_i \G_i)$. We claim that $\partial(\G_i)$ is $\pi_1$-injective in $B^\circ$.  By the loop theorem, it is enough to show that $\partial(\G_i)$ is incompressible in $B^\circ$-- that there is no essential simple closed curve in the kernel of the inclusion of $\pi_1(\partial(\G_i))$ in $\pi_1(B^\circ)$. Suppose $\gamma \subset \partial \G_i$ is such a simple closed curve. By Dehn's lemma, $\gamma$ bounds a disk $D$ in $B^\circ$.  Since $\pi_1(\G_i)$ injects into $\pi_1(S^3\setminus K)$,  $\gamma$ must be null-homotopic in $\G_i$. Another application of Dehn's lemma gives that $\gamma$ bounds an embedded disk $D'$ in $\G_i$. 

Gluing $D'$ and $D$ together along $\gamma$, we obtain an embedded sphere $S\subset S^3 \setminus K$ such that $S\cap \partial(\G_i)= \gamma$.  Therefore $\gamma$ is a separating curve in $\partial(\G_i)$. Moreover, $\gamma$ is essential in $\partial \G_i$, so $S$ cuts $\G_i$ into two pieces each with nontrivial $\pi_1$. {Since $K$ is disjoint from $S$, it is contained entirely in one of the connected components of the complement of $S$. So the other connected component of $S^3 \setminus (K \cup S)$ is a ball containing some piece of $ \G_i$ with nontrivial $\pi_1$. But this contradicts the fact that $\pi_1(\G_i)$ injects into $\pi_1(S^3 \setminus K)$.  }

Next, we claim that $B^\circ$ has no non-peripheral essential tori. Suppose $T$ is a non-peripheral essential torus in $B^\circ$.  Consider how $T$ sits inside $S^3 \setminus K$. Since $K$ is a hyperbolic knot, either $T$ bounds a solid torus in $S^3 \setminus K$ or $T$ is the boundary of a tubular neighbourhood of $K$. In the first case, the component of $S^3\setminus K$ inside $T$ has fundamental group $\Z$, while in the latter case the component has fundamental group $\Z^2$. In either case, the $\G_i$'s cannot reside inside this solid torus, else they could not be $\pi_1$-injective in $S^3 \setminus K$. So the $W_i$'s live outside $T$. Consequently $T$ is either inessential or peripheral and parallel to $K$. One can check in a similar way that $B^\circ$ contains no essential embedded spheres. We have consequently shown that $B^\circ$ is irreducible and atoroidal. Therefore, $B^\circ$ has a JSJ decomposition into hyperbolic pieces. That is, there is a canonical collection of essential annuli $A$ along which we can cut so that the resulting manifolds are hyperbolic. Note that none of these essential annuli touch $K$, since such an annulus would give rise to a free homotopy between a curve in $W_i$ and a peripheral curve in $S^3 \setminus K$, and we constructed $W_i$ so that such free homotopies do not exist.

Let $B^\circ(K,\frac 1 q)$ denote the result of $\frac 1 q$ Dehn filling along $K$. This is our candidate for $B$, and $\Sigma_i = \partial \G_i$. For large enough $q$, this Dehn filling preserves the hyperbolicity of the components of the JSJ decomposition of $B^\circ$.  This is clear because $K$ is entirely contained in one of the components. Therefore, $B^\circ(K,\frac 1 q)$ remains atoroidal and irreducible. We wish to show that $\partial \G_i$ remains incompressible after Dehn filling $K$. For each $i$ let $B^\circ_i = S^3 - (K \cup W_i)$. The argument from the previous paragraph also shows that $B^\circ_i$ is irreducible and atoroidal. Although $B^\circ$ may contain essential annuli, $B^\circ_i$ is acylindrical and therefore hyperbolic. Let $DB_i^\circ$ be the double of $B^\circ_i$ along $\Sigma_i$. Since we are gluing $B^\circ_i$ to itself along a single incompressible boundary component, the result is still hyperbolic.  The resulting manifold has a $\Z/2$ symmetry fixing $\Sigma_i$. By Mostow rigidity, this symmetry is realized as an orientation reversing isometry, ie a reflection. Therefore, $\Sigma_i$ is totally geodesic in $DB^\circ_i$. Let $DB_i$ be the result of doing $\frac 1q$ and $-\frac 1q$ surgery along the two copies of $K$ in $DB^\circ_i$ for some large $q\in \Z$. For $q$ large enough, Thurston's hyperbolic Dehn surgery theorem says that the resulting manifold is hyperbolic. Moreover, the Dehn surgery respects the $\Z/2$ symmetry. Thus, $\Sigma_i$ remains totally geodesic in the Dehn filling and must be $\pi_1$-injective. It follows that $\Sigma_i$ is incompressible in $B^\circ_i(K,\frac 1 q)$, and therefore also incompressible in $B^\circ(K,\frac 1 q)$.

By Mayer--Vietoris, slope $\frac 1 q$ Dehn surgery on any knot in $S^3$ results in an integer homology sphere. The $W_i$'s have trivial linking number in $S^3$ with each other and with $K$ (since $\varphi_i$ was chosen to map into the commutator subgroup of $\pi_1(S^3\setminus K)$), so this remains the case after doing $\frac 1 q$ Dehn surgery on $K$. Therefore, \cref{prop:lh} below proves that the homology of $B$ has the desired form.

\begin{prop}\label{prop:lh}
    Suppose $M$ is an integer homology 3-sphere. Let $\G_1,\dots, \G_n$ be $n$ disjoint genus $g$ handlebodies embedded in $M$. Suppose that the linking form on $H_1(\G_i) \otimes H_1(\G_j)$ vanishes for all $i \neq j$. Let $X = M \setminus (\cup_i \G_i)$. Let $\Sigma_1\dots \Sigma_n$ be the genus $g$ boundary components of $X$. Then $H^1(X,\Z) \cong \bigoplus_{i=1}^n L_i$, where $L_i$ is a $g$ dimensional subspace of $H^1(\Sigma_i)$.
\end{prop}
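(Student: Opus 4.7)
My plan is to compute $H_1(X;\Z)$ by Mayer--Vietoris and then dualize to $H^1(X;\Z)$, with the linking form hypothesis entering at one key juncture to force each boundary component $\Sigma_i$ to contribute independently.

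First, Mayer--Vietoris for the decomposition $M = X \cup (\sqcup_i \G_i)$ with intersection $\sqcup_i \Sigma_i$, combined with $H_1(M) = H_2(M) = 0$, gives an isomorphism $H_1(\sqcup_i \Sigma_i) \xrightarrow{\cong} H_1(X) \oplus H_1(\sqcup_i \G_i)$ via $\alpha \mapsto (i_*\alpha, -j_*\alpha)$. Setting $L_{\Sigma_i} := \ker(H_1(\Sigma_i) \to H_1(\G_i))$ (the rank-$g$ meridian Lagrangian), restricting this iso to $\bigoplus_i L_{\Sigma_i} \subset H_1(\sqcup_i \Sigma_i)$ yields $H_1(X) \cong \bigoplus_i L_{\Sigma_i}$, induced by inclusion.

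The main step is to show that the inclusion-induced map $H_1(\Sigma_i) \to H_1(X) = \bigoplus_j L_{\Sigma_j}$ lands in the $i$-th summand $L_{\Sigma_i}$. For meridians this is automatic; for a longitude $\ell$ of $\Sigma_i$ (representing a generator of $H_1(\G_i)$), I would bound $\ell = \partial C$ with $C \subset M$ a $2$-chain, possible since $H_1(M) = 0$. For each $j \neq i$, the cycle $C \cap \Sigma_j$ bounds $C \cap \G_j$ inside $\G_j$, so $[C \cap \Sigma_j] \in L_{\Sigma_j}$; meanwhile the linking form hypothesis, combined with the identity $\mathrm{lk}(\ell,\tilde\ell) = C \cdot \tilde\ell = (C \cap \Sigma_j) \cdot_{\Sigma_j} \tilde\ell$ for longitudes $\tilde\ell$ of $\G_j$ pushed onto $\Sigma_j$, places $[C \cap \Sigma_j]$ in the symplectic annihilator of the longitudes on $\Sigma_j$, i.e.\ in the longitude Lagrangian. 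Since the meridian and longitude Lagrangians are complementary direct summands of $H_1(\Sigma_j;\Z)$, this forces $[C \cap \Sigma_j] = 0$. Surgering $C$ against $2$-chains on $\Sigma_j$ witnessing $C \cap \Sigma_j = \partial D_j$ (and $3$-chains in $\G_j$, using $H_2(\G_j) = 0$) produces a modified $2$-chain $C' \subset X \cup \G_i$ with $\partial C' = \ell$. Thus $\ell$ is null-homologous in $X \cup \G_i$. A direct Mayer--Vietoris computation for $X \cup_{\Sigma_i} \G_i$ identifies $\ker(H_1(X) \to H_1(X \cup \G_i))$ as precisely $i_*(L_{\Sigma_i})$, forcing $i_*\ell \in L_{\Sigma_i}$.

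To finish, dualize: $H^1(X) \cong \Hom(H_1(X),\Z) \cong \bigoplus_i \Hom(L_{\Sigma_i}, \Z)$. For $\omega = (\omega_k) \in H^1(X)$ and $\alpha \in H_1(\Sigma_i)$, the previous step gives $r_i(\omega)(\alpha) = \omega(i_*\alpha) = \omega_i(i_*\alpha)$, so the restriction $r_i : H^1(X) \to H^1(\Sigma_i)$ factors through the $i$-th summand and vanishes on the others. Setting $L_i := r_i(\Hom(L_{\Sigma_i}, \Z)) \subset H^1(\Sigma_i)$ and noting that $\omega_i \mapsto \omega_i \circ i_*|_{H_1(\Sigma_i)}$ is injective (since $i_*$ restricted to meridians of $\Sigma_i$ is the identity onto $L_{\Sigma_i} \subset H_1(X)$), $L_i$ is a rank-$g$ subgroup of $H^1(\Sigma_i)$ and the combined restriction yields the desired isomorphism $H^1(X) \xrightarrow{\cong} \bigoplus_i L_i$. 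The chief obstacle is the middle step: cleanly translating the algebraic linking-form hypothesis into intersection data on each $\Sigma_j$ to conclude $[C \cap \Sigma_j] = 0$, and then doing the chain-level surgery on $C$; once this geometric step is carried out, the remaining homological algebra is routine.
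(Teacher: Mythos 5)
Your argument is correct, and at bottom it runs on the same two ingredients as the paper's proof: the Mayer--Vietoris sequence for $M = X \cup (\sqcup_i \G_i)$ together with $H_1(M)=H_2(M)=0$, and the vanishing of the linking form to decouple the boundary components. The difference is in execution. The paper stays in cohomology throughout: the first Mayer--Vietoris gives $H^1(X)\oplus H^1(\sqcup_i\G_i)\cong H^1(\sqcup_i\Sigma_i)$, and then a second Mayer--Vietoris, applied to $M=(M\setminus\G_i)\cup\G_i$, exhibits the image of $H^1(M\setminus\G_i)\to H^1(\Sigma_i)$ as a rank-$g$ summand, with the linking hypothesis invoked in one line to say that classes in $H^1(M\setminus\G_i)$ restrict to zero on $\Sigma_j$ for $j\neq i$. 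You instead prove the homological dual of that one line explicitly: you bound a longitude $\ell\subset\Sigma_i$ by a $2$-chain $C$, show $[C\cap\Sigma_j]=0$ by intersection-number/linking considerations, surger $C$ off the other handlebodies, identify $\ker(H_1(X)\to H_1(X\cup\G_i))$ with $i_*(L_{\Sigma_i})$ by a third Mayer--Vietoris, and then dualize via universal coefficients. What your route buys is a transparent justification of exactly the step the paper leaves implicit; what it costs is length and some care with transversality and chain-level surgery. One small imprecision: an arbitrary lift of a basis of $H_1(\G_j)$ to $\Sigma_j$ need not span a Lagrangian, so ``the longitude Lagrangian'' should either be fixed by choosing longitude curves dual to a system of meridian disks (these are disjoint, hence isotropic), or bypassed entirely by noting that $[C\cap\Sigma_j]$ lies in the meridian Lagrangian $L_{\Sigma_j}$, hence pairs to zero with meridians, while your linking computation shows it pairs to zero with the longitude lifts; it therefore pairs to zero with all of $H_1(\Sigma_j)$ and vanishes by nondegeneracy of the intersection form.
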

\begin{proof}
    The Mayer-Vietoris sequence for the triple $M,\cup_i \G_i, X=M\setminus \cup_i \G_i$ reads
    $$H^1(M) \to H^1(X) \oplus H^1(\cup_i \G_i) \to H^1(\sqcup_i \Sigma_i) \to H^2(M)$$
    We assumed that $M$ is an integer homology 3-sphere, so $H^1(M)=H^2(M)=0$ and therefore the middle map is an isomorphism. So $H^1(X) \cong \Z^{ng}$.

    A similar argument shows that for each $i$, there is an isomorphism $H^1(M\setminus \G_i) \oplus H^1(\G_i) \cong H^1(\Sigma_i)$. By our assumption on the vanishing of the linking form, whenever $i\neq j$, each element of $H^1(M \setminus \G_i)$ restricts to zero in $H^1(\Sigma_j)$. So for each $i$, $H^1(M \setminus \G_i)$ is a $\Z^g$ summand of $H^1(X)$. These are the summands $L_i$ that we sought.
\end{proof}
\end{subsection}

\begin{subsection}{Assembling the pieces}
Thanks to the previous section, we have many hyperbolic homology sums of handlebodies at our disposal. We can glue these building blocks together to obtain closed hyperbolic 3-manifolds $M$.

Let $B_1,\dots, B_N$ be copies of a hyperbolic homology connect sum of two genus $g$ handlebodies. Each of these manifolds has two boundary components homeomorphic to $\Sigma$. Let $B_0$ and $B_{N+1}$ be hyperbolic homology genus $g$ handlebodies.

Choose identifications between the boundaries of adjacent blocks so that the resulting 3-manifold is an integer homology sphere. To be more precise, each piece $B_i$ looks homologically like a connect sum of two handlebodies. Glue adjacent $B_i$ as in a Heegaard decomposition of $S^3$, so that the entire manifold looks homologically like a connect sum of copies of $S^3$. Choose a pseudo-Anosov element $\phi$ of the Torelli group (the subgroup of the mapping class group of the surface acting trivially on homology) and twist all the gluing maps by $\phi^k$, where $k$ is a big number to be determined later.  Such a $\phi$ exists by \cite{fm11}.  

Let $\Sigma_i$ be the surface along which $B_{i-1}$ and $B_i$ are glued.

Call the resulting 3-manifold $M$. Since we are gluing irreducible and atoroidal pieces along incompressible surfaces of genus $\geq 2$ the resulting 3-manifold is hyperbolic by the main theorem of \cite{bmns16}, provided we glue by a large enough power of $\phi$.

For the purposes of getting a spectral gap, the specific construction used is not too important. We will only use the properties below:
\begin{prop}\label{prop:hknd}
	For each $i$, the restriction map $$H^1(B_{i-1} \cup B_i \cup B_{i+1}, \R) \to H^1(B_i,\R)$$ is trivial.
\end{prop}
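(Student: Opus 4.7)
The plan is to combine the structure theorem from Proposition~\ref{prop:lh} with a short restriction argument. The key observation is that the gluing of adjacent blocks was chosen so that at each interface surface $\Sigma_j$, the two Lagrangian subspaces of $H^1(\Sigma_j, \R)$ coming from the cohomology of the blocks on either side are transverse.

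First I would set up notation. By Proposition~\ref{prop:lh}, for each $j$ there is a direct sum decomposition $H^1(B_j, \R) = L_j^- \oplus L_j^+$, where $L_j^-$ is a $g$-dimensional subspace of $H^1(\Sigma_j, \R)$ and $L_j^+$ is a $g$-dimensional subspace of $H^1(\Sigma_{j+1}, \R)$ (with one summand taken to be zero for the end-cap blocks $B_0$ and $B_{N+1}$). In particular the restriction map $H^1(B_j, \R) \to H^1(\partial B_j, \R)$ is injective, with image precisely $L_j^- \oplus L_j^+ \subset H^1(\Sigma_j, \R) \oplus H^1(\Sigma_{j+1}, \R)$. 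Next, I would justify the transversality of the interface Lagrangians: the gluing at $\Sigma_j$ was chosen to emulate a Heegaard splitting of $S^3$, then further twisted by a large power of a pseudo-Anosov element $\phi$ of the Torelli group. Since Torelli acts trivially on homology, the twist does not affect the homological picture, and a standard Mayer--Vietoris argument shows that the Heegaard-of-$S^3$ condition is equivalent to the statement that $L_{j-1}^+$ and $L_j^-$ are transverse Lagrangians in $H^1(\Sigma_j, \R)$.

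The main argument is then a short chase. Let $X = B_{i-1} \cup B_i \cup B_{i+1}$ and take $\alpha \in H^1(X, \R)$. The restriction $\alpha|_{\Sigma_i}$ can be computed either via $B_{i-1}$ (yielding a class in $L_{i-1}^+$) or via $B_i$ (yielding a class in $L_i^-$), and by functoriality these two classes agree. Transversality then forces $\alpha|_{\Sigma_i} \in L_{i-1}^+ \cap L_i^- = \{0\}$; symmetrically $\alpha|_{\Sigma_{i+1}} = 0$. Hence $\alpha|_{B_i}$ vanishes on $\partial B_i = \Sigma_i \sqcup \Sigma_{i+1}$, and is therefore zero by the injectivity of the boundary restriction map established in the first step. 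The only substantive point to pin down is the transversality claim, but once that is in hand the rest of the argument is formal.
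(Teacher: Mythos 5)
Your proposal is correct and follows essentially the same route as the paper: both arguments use that the restriction $H^1(B_i,\R)\to H^1(\partial B_i,\R)$ is injective (from the homology-connect-sum-of-handlebodies structure, i.e.\ Proposition~\ref{prop:lh}) and that at each interface surface the images of the two adjacent blocks are transverse Lagrangians by the Heegaard-of-$S^3$ gluing choice (unaffected by the Torelli twist), forcing any class on $B_{i-1}\cup B_i\cup B_{i+1}$ to restrict to zero on $\Sigma_i$ and $\Sigma_{i+1}$ and hence to zero on $B_i$. The only cosmetic difference is that the paper routes the vanishing on the interface surfaces through the intermediate restriction maps $H^1(B_{i-1}\cup B_i)\to H^1(\Sigma)$, while you chase the class directly; the content is identical.
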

\begin{proof}
    The intersection between the images of the restriction maps $H^1(B_{i-1}) \to H^1(\Sigma_{i-1})$ and $H^1(B_{i}) \to H^1(\Sigma_{i-1})$ is zero, by our choice of gluing. Therefore,
    the restriction map $H^1(B_{i-1} \cup B_i) \to H^1(\Sigma_{i-1})$ is zero. Similarly, the restriction map $H^1(B_i \cup B_{i+1}) \to H^1(\Sigma_{i})$ is zero. Define $r$ and $s$ to be the maps below.
    $$H^1(B_{i-1} \cup B_i \cup B_{i+1}, \R) \xrightarrow{r} H^1(B_i,\R) \xrightarrow{s} H^1(\Sigma_{i-1}) \oplus H^1(\Sigma_i)$$
    Since $B_i$ is an integer homology connect sum of two handlebodies, $s$ is injective. We have just shown that $s \circ r=0$. So $r=0$, as desired.
\end{proof}
\cref{prop:hknd} says that all homology in $B_i$ is killed locally.
\begin{prop}
    For sufficiently large $k$ and each $1\leq i \leq N$, there is a subregion of $M$ corresponding to $B_i$ that is bilipschitz to a standard model independent of $i,N$. For $B_0$ and $B_{N+1}$, there are subregions of $M$ bilipschitz to standard models independent of $N$. All bilipschitz constants are uniform in $i,N$.  The subregions of $M$ corresponding to $B_0,B_1,\dots,B_{N+1}$ cover all of $M$.  

\end{prop}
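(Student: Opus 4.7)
The plan is to apply the Brock--Minsky--Namazi--Souto gluing theorem (Theorem~\ref{bmnsmainthm}) directly. Take $\mathcal{M}$ to be the finite collection consisting of two decorated 3-manifolds: one HHH with two boundary components of genus $g$ (of which $B_1, \ldots, B_N$ are copies) and one HHH with a single boundary component of genus $g$ (of which $B_0$ and $B_{N+1}$ are copies), each equipped with a fixed complete marking on every boundary component. These manifolds are irreducible, atoroidal, have incompressible boundary, and have nonabelian fundamental group, so the hypotheses of Theorem~\ref{bmnsmainthm} are satisfied. Since all adjacent pairings are obtained by composing a single fixed boundary identification $\Phi$ with the \emph{same} power $\phi^k$ of a fixed pseudo-Anosov Torelli element, Proposition~\ref{prop:rbounded} supplies a uniform $R>0$ for which every gluing has $R$-bounded combinatorics, with heights tending to infinity as $k \to \infty$.

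Choose $k$ large enough that every height exceeds the constant $D = D(\mathcal M, R)$ furnished by Theorem~\ref{bmnsmainthm}. The theorem then gives a $K$-bilipschitz diffeomorphism $F\colon \mathcal{M}_M \to M$ from the model metric to the hyperbolic metric, with $K = K(\mathcal M, R)$ depending only on the (finite) data of the building blocks and gluing class, and hence independent of $i$ and $N$. This is the source of the uniform bilipschitz constant.

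Next I would identify explicit standard subregions in the model. By construction, $\mathcal{M}_M$ is obtained from the fixed-metric pieces $B_i$ glued to interpolating cylinders $\Sigma \times [0, L_i]$, where $L_i$ is the Teichmüller distance between the induced metrics on the paired boundaries. Because every gluing map is the same $\Phi \circ \phi^k$, all the lengths $L_i$ coincide with a single value $L$, and each interpolating cylinder is isometric to a fixed model. Define the subregion $B_i'$ for $1 \leq i \leq N$ to be $B_i$ together with a collar of width $L/2$ from each adjacent interpolating cylinder (with corners smoothed in a uniform way as in BMNS); define $B_0'$ and $B_{N+1}'$ analogously, each receiving a single collar. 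These subregions of the model are manifestly isometric to fixed standard models independent of $i$ and $N$, and they cover $\mathcal{M}_M$. Applying the BMNS diffeomorphism $F$ transports them to subregions of $M$ that are $K$-bilipschitz to the fixed models and cover all of $M$, as required.

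The only real point to check is that the standard model can actually be made independent of $i$, which is what forces the choice that all gluings use one and the same $\phi^k$ rather than varying mapping classes; this is the feature of the construction, already in place, that makes the proposition essentially a direct consequence of Theorem~\ref{bmnsmainthm}. The remaining verifications (incompressibility, atoroidality, nonabelian $\pi_1$ of the building blocks, $R$-bounded combinatorics, and control of heights) are all supplied by the HHH construction of the previous subsection together with Proposition~\ref{prop:rbounded}.
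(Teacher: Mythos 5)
Your proposal is correct and follows essentially the same route as the paper: invoke Proposition~\ref{prop:rbounded} to get uniform $R$-bounded combinatorics and heights tending to infinity for the gluings twisted by $\phi^k$, then apply Theorem~\ref{bmnsmainthm} to obtain a uniformly bilipschitz identification of $M$ with the model built from the finitely many standard blocks and interpolating product regions. Your extra detail on reading off the standard subregions from the model (blocks plus half-collars of the product regions, which cover the model) is exactly the implicit content of the paper's one-line proof; the only harmless simplification is treating all boundary pairings as a single $\Phi$, when finitely many fixed pairings suffice for the same conclusion.
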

\begin{proof}
    Since all our gluing identifications are twisted by some Torelli element $\varphi^k$ (for $k$ large and independent of $i,N$), \cref{prop:rbounded} ensures that our gluing satisfies the conditions of \cref{bmnsmainthm}.
\end{proof}

{Note that the manifolds constructed in this section have unbounded rank and do not converge to a tame manifold.}

\end{subsection}

\end{section}

\begin{section}{Variants of the construction}\label{sec.var}
\begin{subsection}{Tower of covers}
The construction can be modified so that the sequence of manifolds is a tower of covers. {We will find a 2-boundary-component HHH $\widetilde B$ that double covers a manifold $B$ that is nearly a 1-boundary-component HHH. We say ``nearly'' because $B$ will have some additional homology associated with the $\Z/2$ cover; the best we can do is to arrange that $B$ will be a hyperbolic homology connect sum of a handlebody and $\mathbb{RP}^3$. As in the previous section, glue two copies of $B$ in such a way that the resulting manifold is a homology $\mathbb{RP}^3\#\mathbb{RP}^3$, twisting the gluing identification by a large power of a pseudo-Anosov Torelli element $\phi^k$ to ensure that the result is a hyperbolic 3-manifold. The resulting 3-manifold admits a tower a covers by repeatedly unfolding one of the $B$'s. The $n^{th}$ manifold in the tower looks like $2^n-1$ copies of $\widetilde B$ glued end to end, capped off with a copy of $B$ at the ends. The techniques of \cref{thm:1} apply equally well to show that this sequence has a uniform coexact spectral gap.}

Now let's construct $B$ and $\widetilde B$. Let $L$ be the link in \cref{fig:rp3_knot}. We verified that $L$ is hyperbolic using SnapPy. The antipodal map on $S^3$ exchanges the two components. This is a consequence of some simpler order 2 symmetries of the diagram. A $180^\circ$ degree rotation of the diagram interchanges the two components. Circular inversion in the plane (or equivalently, spherical inversion with respect to a unit sphere centred at the middle of the diagram) sends the diagram to itself, but changes every overcrossing to an undercrossing. Finally, reflection in the plane of the diagram changes every over crossing to an undercrossing. The antipodal map on $S^3$ is the product of these three maps, and so has the claimed behaviour. 

Therefore, $L$ descends to a null-homologous hyperbolic knot $K$ in $\mathbb{RP}^3$. The linking number between the two components of $L$ is 0. Therefore, for any $n>0$, the result of simultaneous $1/n$ Dehn surgery on the two components of $L$ is an integer homology sphere. 

As in \cref{sec:construction}, excise a null-homologous, $\pi_1$-injective wedge sum of circles $W$ from $\mathbb{RP}^3\setminus K$. We also want $W$ to be standardly embedded in $\mathbb{RP}^3$ when ignoring $K$. The construction in \cref{sec:construction} works here. Since $W$ is embedded in $\mathbb{RP}^3$ in a standard way, $W$ is contained in a ball in $\mathbb{RP}^3$. So $W$ lifts to two null-homologous wedge sums of circles $W_1$ and $W_2$ in $S^3 \setminus L$, and moreover $W_1$ and $W_2$ are contained in disjoint balls in $S^3$ and hence are standardly embedded in $S^3$. Also, $W_1$ and $W_2$ are both $\pi_1$-injective in $S^3\setminus L$.

Now as argued in \cref{sec:construction}, for large enough $n$, $1/n$ Dehn surgery on $L$ in $S^3 \setminus (W_1 \cup W_2)$ results in a hyperbolic homology sum of two handlebodies. We can take this manifold to be $\widetilde B$, and take $B$ as its quotient by the antipodal map.

\begin{figure}[htbp]
\centerline{\includegraphics[width=0.4\textwidth]{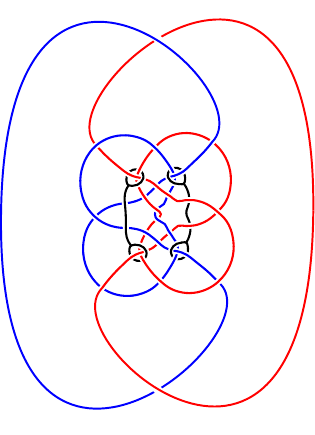}}
\caption{The link $L$ is shown in red and blue. A possibility for $W_1$ and $W_2$ is drawn in black.}\label{fig:rp3_knot}
\end{figure}
\end{subsection}

\begin{subsection}{Expander graphs}
We can arrange that the sequence of manifolds also has a spectral gap for the Laplacian acting on functions, at the cost of allowing our manifolds to have nonzero rational first homology. This will prove \cref{thm.13}.

A $d$-regular expander family is an infinite family of connected $d$-regular graphs $\mathcal G$ so that for any $G=(V,E)$ in $\mathcal G$ and any mean-zero function $f \in \ell^2(V)$, we have 
$$ \sum_{v\in V} |f(v)|^2 \leq \frac1{c^{2}} \sum_{v\in V} \left|\sum_{v'\sim v} \left(f(v') - f(v)\right) \right|^2 $$
where $c>0$ is a uniform constant (depending on the family). 

Now fix an expander family of $d$-regular graphs $\mathcal G$.
Let $B$ be a hyperbolic homology connect sum of $d$ handlebodies. For any $G\in \mathcal G$, we can form a hyperbolic $3$-manifold $M$ by taking a copy of $B$ for each vertex $v$ of $G$ and gluing the boundary components along edges with a large, fixed power of a suitable pseudo-Anosov mapping class. 

\begin{prop}
    There is a {uniform} spectral gap for functions on the 3-manifold $M$, i.e., for any mean-zero function $g \in L^2(M) \cap C^1(M)$, {one has 
    $$ \int_{M} |g|^2 \lesssim \int_M |dg|^2\,, $$
    where the implicit constant is independent of the particular graph $G$ in the expander family.}
\end{prop}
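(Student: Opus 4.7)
The plan is to combine a local Poincaré inequality on each building block with the discrete expander inequality on the graph $G$, via a standard partition-and-average argument.

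First I would decompose $M$ into regions $\{B_v\}_{v\in V(G)}$ indexed by vertices of $G$, where $B_v$ is the subregion of $M$ coming from the copy of $B$ at $v$ (together with half of each collar interpolating with neighboring blocks). By the BMNS-based uniform bilipschitz modeling established in the previous section, each $B_v$ is $K$-bilipschitz to a fixed model, with $K$ and $\vol(B_v)$ uniform across $v$ and across the graph $G$. Given mean-zero $g$, define a function $f \colon V(G) \to \R$ by
\begin{equation*}
f(v) = \frac{1}{\vol(B_v)}\int_{B_v} g.
\end{equation*}
Since the $\vol(B_v)$ agree up to a uniform constant and $\int_M g = 0$, the discrete average $\sum_v \vol(B_v) f(v)$ vanishes, so after a trivial adjustment we may treat $f$ as mean-zero on $V(G)$ and apply the expander inequality.

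Next I would write
\begin{equation*}
\int_M |g|^2 \leq 2\sum_v \int_{B_v} |g - f(v)|^2 + 2\sum_v \vol(B_v)\,|f(v)|^2.
\end{equation*}
The first sum is handled by the uniform Poincaré inequality on the fixed model region: since each $B_v$ is bilipschitz to a fixed compact manifold (with uniform constants), there is a uniform constant $C_P$ with
\begin{equation*}
\int_{B_v} |g - f(v)|^2 \leq C_P \int_{B_v} |dg|^2,
\end{equation*}
and summing gives a bound by $C_P \int_M |dg|^2$.

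For the second sum, the expander hypothesis (applied to the mean-zero function $f$) yields
\begin{equation*}
\sum_v |f(v)|^2 \lesssim \sum_{v \sim v'} |f(v) - f(v')|^2,
\end{equation*}
so it suffices to control each edge term $|f(v) - f(v')|^2$ by $\int_{B_v \cup B_{v'}} |dg|^2$. Each adjacent pair $B_v, B_{v'}$ shares a collar region of uniform geometry, so the standard two-ball Poincaré inequality (applied on the bilipschitz model of $B_v \cup B_{v'}$) gives
\begin{equation*}
|f(v) - f(v')|^2 \lesssim \int_{B_v \cup B_{v'}} |dg|^2,
\end{equation*}
with a constant independent of $v, v', G$. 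Summing over edges and using that $G$ is $d$-regular (so each $B_v$ appears in at most $d$ pairs), we obtain $\sum_{v \sim v'} |f(v) - f(v')|^2 \lesssim d \int_M |dg|^2$. Combining with the Poincaré bound on the fluctuation piece yields $\int_M |g|^2 \lesssim \int_M |dg|^2$ with a constant depending only on the expander constant $c$, the degree $d$, and the uniform model geometry.

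The main obstacle is purely a bookkeeping one: checking that the bilipschitz models of $B_v$ and of adjacent pairs $B_v \cup B_{v'}$ really are uniform across all graphs $G$ in the family $\mathcal G$. This follows from \cref{bmnsmainthm} and \cref{prop:rbounded}, provided the fixed pseudo-Anosov power $\phi^k$ used for gluing is chosen large enough once and for all (independently of $G$), since the gluing maps along each edge all come from the same finite collection of marked boundary data twisted by $\phi^k$. Once that uniformity is in hand, the local and discrete estimates combine cleanly, and the nonzero first Betti number of $M$ is irrelevant to the function spectrum.
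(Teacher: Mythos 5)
Your argument is essentially the paper's own proof: average $g$ over the blocks to get a function on $V(G)$, control the fluctuation by a uniform Poincar\'e inequality on each block, bound differences of adjacent averages by a Poincar\'e inequality on unions of adjacent blocks, and feed the averaged function into the expander inequality, with all uniformity supplied by \cref{bmnsmainthm} and \cref{prop:rbounded}. The only (cosmetic) difference is that the paper first transfers to the model metric, where the $B_v$ are isometric so the averaged function is exactly mean-zero, whereas working directly on $M$ your ``trivial adjustment'' should be spelled out --- e.g.\ use $\int_M |g|^2 \le \int_M |g-\bar f|^2$ (valid since $\int_M g=0$, with $\bar f$ the unweighted mean of $f$ over $V$) and apply the expander inequality to $f-\bar f$, since naively absorbing the discrepancy fails because the block volumes agree only up to a bounded, not small, factor.
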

\begin{proof}
    The construction of $M$ and Theorem~\ref{bmnsmainthm} implies that we can write $M = \bigcup_{v\in V} \tilde B_v$, where each $\tilde B_v$ is $K$-bilipschitz to a standard model $B$. {The model $\mathcal{M}_M$ of Theorem~\ref{bmnsmainthm} can be written as $\mathcal{M}_M = \bigcup_{v\in V} B_v$ where the $B_v$ are all isometric to $B$. Now, due to the uniform $K$-bilipschitz property, a uniform function spectral gap for the model $\mathcal{M}_M$ implies one for the original $M$. Thus, it suffices to show a uniform spectral gap for the model $\mathcal{M}_M$.} 
    Now we define a map $T : L^2(\mathcal{M}_M) \to \ell^2(G)$ by 
    $$ T(g)(v) := \frac1{|B_v|} \int_{B_v} g$$
    Notice that we have
    \begin{align*}
        \|g\|_{L^2(\mathcal{M}_M)}^2 = \sum_{v\in V} \int_{B_v} |g|^2 \geq \sum_{v\in V} {|B_v|} \left[\frac1{|B_v|} \int_{B_v} g\right]^2 = |B| \|T(g)\|_{\ell^2(G)}^2
    \end{align*}
    and so $T$ is a bounded linear operator. Note also that $T$ sends mean-zero functions to mean-zero functions. Now we claim that for adjacent vertices $v\sim v'$,
    \begin{equation}
        |T(g)(v')-T(g)(v)|^2 \lesssim \int_{B_{v'}\cup B_v} |\nabla g|^2.
    \end{equation}
    To show the claim, let $\bar g := |B_v\cup B_{v'}|^{-1} \int_{B_v\cup B_{v'}}g$ and note that 
    $$T(g)(v')-T(g)(v) = T(g-\bar g)(v')-T(g-\bar g)(v).$$ Thus we get
    \begin{align*}
        |T(g)(v')-T(g)(v)|^2 &= |T(g-\bar g)(v')-T(g-\bar g)(v)|^2\\
        &\leq2\left(T(g-\bar g)(v')^2 + T(g-\bar g)(v)^2\right)\\
        &\leq 2 |B|^{-1} \int_{B_{v'}\cup B_v} |g-\bar g|^2 \leq 2 |B|^{-1} p_B \int_{B_{v'}\cup B_v} |\nabla g|^2,
    \end{align*}
    where in the last inequality we used the Poincar\'e inequality on $B_{v'}\cup B_v$, and where $p_B$ is the constant in the Poincar\'e inequality which, importantly, is independent of $v,v'$.
    Thus the claim is proved.
    {Now let $c_G > 0$ denote the spectral gap for $G$. Since $G$ varies in an expander family, $c_G$ is bounded below by a uniform constant.} We complete the proof by writing
    \begin{align*}
        \|g\|_{L^2(\mathcal{M}_M)}^2 &= \sum_{v\in V} \int_{B_v} |g|^2 \leq \sum_{v\in V} \int_{B_v} |g - T(g)(v)|^2 + \sum_{v\in V} |B| T(g)(v)^2\\
        &\leq \sum_{v\in V} p_B \int_{B_v} |\nabla g|^2 + |B| c_G^{-2} \sum_{v\in V} \left|\sum_{v'\sim v} \left(T(g)(v') - T(g)(v)\right) \right|^2\\
        &\lesssim p_B \|\nabla g\|_{L^2(\mathcal{M}_M)}^2 + |B|c_G^{-2} p_B \sum_{v\in V} \sum_{v'\sim v} \int_{B_{v'}\cup B_v} |\nabla g|^2
        \\&\lesssim \|\nabla g\|_{L^2(\mathcal{M}_M)}\,.
        \qedhere
    \end{align*}
\end{proof}

Recall that the cohomology of $B$ takes the form $L_1 \oplus \dots \oplus L_d$, where $L_d$ is a Lagrangian subspace of the cohomology of the $i^{th}$ boundary component. Choose the gluings so that the Lagrangian subspaces on identified boundary components are transverse; this guarantees that the analogue of \cref{prop:hknd} holds:
for each $v\in V$ the restriction map
$$ H^1\left(B_v \cup \textstyle{\bigcup}_{v'\sim v} B_{v'}; \mathbb{R}\right) \to H^1(B_v;\mathbb{R})$$
is trivial. Then by an argument analogous to the proof of Theorem~\ref{thm:1}, {we will see that }the 3-manifolds $M$ thus constructed will have a coexact $1$-form spectral gap in addition to a function spectral gap.

\end{subsection}

\end{section}
\section{Analytic preliminaries: finding bounded primitives}\label{sec.bddprim}

In this section, let $X$ be a compact smooth manifold of dimension $n$ with boundary. All implicit constants may depend on $X$.
The results presented here are likely standard, but we weren't able to find an exact reference for our applications, so we give full details for the convenience of the reader.
	
	\begin{prop}[Non-compactly supported case] \label{prop:non-cpt}
		Let $\alpha$ be an $r$-form on $X$ which is exact in the sense that there is a smooth $(r-1)$-form on $X$ whose exterior derivative is $\alpha$. Then there is a smooth $(r-1)$-form $\beta$ defined in the interior of $X$ with $d\beta = \alpha$ and
		\begin{align} \label{eqn:non-cpt_primitive_bound}
			\|\beta\|_{W^{1,2}(X)}
			\lesssim \|\alpha\|_{L^2(X)}.
		\end{align}
	\end{prop}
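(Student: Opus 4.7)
Plan. I would obtain $\beta$ by solving an elliptic boundary value problem, using Hodge theory for manifolds with boundary. The natural thing to do is to look for $\beta$ in the gauge $d^*\beta = 0$ with an absolute (Neumann) boundary condition $n \lrcorner \beta = 0$ on $\partial X$. Combined with $d\beta = \alpha$, this is an elliptic first-order system on $(r{-}1)$-forms, and solvability under the assumption of exactness is a standard output of Hodge-Morrey-Friedrichs theory.

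Concretely, the key ingredient is the Hodge-Morrey-Friedrichs $L^2$-orthogonal decomposition
\[
L^2\Omega^r(X) \;=\; d\bigl(W^{1,2}\Omega^{r-1}(X)\bigr) \;\oplus\; d^*\bigl(W^{1,2}_N\Omega^{r+1}(X)\bigr) \;\oplus\; \mathcal{H}^r_N(X),
\]
where the subscript $N$ means $n\lrcorner\omega = 0$ on $\partial X$ (so that $d^*$ is the genuine $L^2$-adjoint of $d$), and $\mathcal{H}^r_N$ is the finite-dimensional space of Neumann harmonic fields representing $H^r(X;\mathbb{R})$. By standard Hodge/elliptic theory on manifolds with boundary (e.g. Gaffney's inequality plus the discreteness of the spectrum of the Hodge Laplacian with absolute BC), the image of $d$ on $W^{1,2}$ is closed in $L^2$, and an exact form is automatically $L^2$-orthogonal to $\mathcal{H}^r_N$ and to the coexact Neumann summand (integration by parts with the Neumann BC kills the boundary contribution). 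Thus $\alpha$ lies in the first summand, so there is $\beta \in W^{1,2}\Omega^{r-1}(X)$ with $d\beta = \alpha$. To obtain the quantitative bound $\|\beta\|_{W^{1,2}} \lesssim \|\alpha\|_{L^2}$, I would pick the canonical $\beta = d^* G\alpha$, where $G$ is the Green's operator for the Hodge Laplacian with absolute boundary conditions; the standard $W^{2,2}$ estimate for $G$ (gain of two derivatives) then yields the desired bound. Equivalently, one can apply the open mapping theorem to the bounded bijection $d : W^{1,2}\Omega^{r-1}/\ker(d) \to \mathrm{image}(d)$.

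Finally, interior smoothness of $\beta$ follows from interior elliptic regularity: in the gauge $d^*\beta=0$, the form $\beta$ satisfies $\Delta\beta = d^*\alpha$ in the interior, and bootstrapping against the smoothness of $\alpha$ shows $\beta \in C^\infty(\mathrm{int}(X))$. The main obstacle is correctly invoking the boundary Hodge theory: one must choose the right adjoint boundary condition so that $d^*$ is actually the $L^2$-adjoint of $d$, and verify the closed-range property of $d$. A subtlety is that $\alpha$ is assumed to have no particular behavior on $\partial X$, so we cannot impose any compatibility condition on $\alpha$ itself; exactness alone must supply the orthogonality to the coexact-Neumann and harmonic pieces, which is exactly what integration by parts delivers.
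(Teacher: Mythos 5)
Your argument is correct, but it takes a genuinely different route from the paper. You work directly on the manifold with boundary, invoking the Hodge--Morrey--Friedrichs decomposition with absolute (Neumann) boundary conditions and taking the canonical primitive $\beta = d^*G\alpha$ from the associated Green operator; the key point you identify is the right one, namely that exactness with a primitive smooth up to $\partial X$ supplies orthogonality to $\mathcal{H}^r_N$ and to the coexact--Neumann summand, since the boundary terms in Green's formula die against the Neumann condition on the \emph{other} factor (no boundary condition on $\alpha$ is needed). The paper instead avoids boundary Hodge theory altogether: it embeds $X$ in its double $\widetilde{X}$, extends the given primitive $\gamma$ by reflection, checks that $d\tilde{\gamma}=\tilde{\alpha}$ holds distributionally because the would-be singular contributions from the two sides of $\partial X$ cancel, and then runs the same ``kill the harmonic and $d^*d$ parts by pairing against the exactness'' argument using ordinary Hodge theory and elliptic regularity on the closed double, finally restricting $d^*\omega$ to $\Int X$. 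The trade-off: your route is the textbook one, produces the canonical coexact primitive, and would give boundary regularity for free, but it leans on heavier standard machinery (Gaffney's inequality, elliptic estimates for the absolute boundary value problem, closed range of $d$); the paper's doubling trick needs only closed-manifold Hodge theory at the cost of the slightly delicate distributional computation across $\partial X$. One small gap to acknowledge in your write-up: you should verify that the canonical candidate actually satisfies $d(d^*G\alpha)=\alpha$, i.e.\ that the $d^*dG\alpha$ component vanishes; this follows by one more integration by parts against $\alpha = d\gamma$ using $n\lrcorner(dG\alpha)=0$, exactly parallel to the paper's step showing $d^*d\omega=0$ on the double.
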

	
		From the proof one can extract other Sobolev and H\"older bounds on $\beta$ of the quality one would expect from elliptic regularity (despite the fact that $d$ is not elliptic). We only need an $L^2 \to L^2$ bound though, so \eqref{eqn:non-cpt_primitive_bound} is already more than enough.
	
	\begin{proof}
		The strategy is to reduce to the case of a closed manifold by embedding $X$ in its double, and then to use Hodge theory. It's rather miraculous that this works (see the comment after the claim below).
		
		Fix a collar neighborhood $V \simeq \partial X \times [0,1)$ of the boundary. Let $X^-$ be a copy of $X$, and let $V^- \subseteq X^-$ be a copy of $V$, but this time identify $V^- \simeq \partial X \times (-1,0]$ by negating the $[0,1)$ coordinate on $V$. Let $\widetilde{X} = X^- \sqcup_{\partial X} X$ be the double of $X$ obtained by gluing $X^-,X$ together along their common boundary. This is a closed smooth manifold, where the smooth structure on $V^- \sqcup_{\partial X} V \subseteq \widetilde{X}$ comes from identifying $V^- \sqcup_{\partial X} V \simeq \partial X \times (-1,1)$. Given a form $\eta$ on $X$, let $\eta^-$ denote its copy on $X^-$, and $\tilde{\eta}$ the extension of $\eta$ to $\widetilde{X}$ given by setting $\tilde{\eta} = \eta^-$ in $\Int X^-$. Note that even if $\eta$ is smooth, $\tilde{\eta}$ may be discontinuous on $\partial X$.
		
		By assumption, there exists $\gamma \in C^{\infty}(X, \wedge^{r-1} T^*X)$ with $d\gamma = \alpha$.
		
		\begin{claim*}
			$d\tilde{\gamma} = \tilde{\alpha}$ in the sense of distributions on $\widetilde{X}$.
		\end{claim*}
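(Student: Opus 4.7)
The plan is to test the identity against an arbitrary smooth compactly supported form on $\widetilde{X}$. Fix $\phi\in C_c^\infty(\widetilde X,\wedge^{n-r}T^*\widetilde X)$; by the definition of the distributional exterior derivative, it suffices to establish
\[
\int_{\widetilde X}\tilde\gamma\wedge d\phi \;=\; \pm\int_{\widetilde X}\tilde\alpha\wedge\phi,
\]
with the correct sign dictated by the Leibniz rule. Split the left-hand side as $\int_{X}\gamma\wedge d\phi+\int_{X^-}\gamma^-\wedge d\phi$ and apply the ordinary Stokes' theorem to each summand separately; this is legitimate because $\gamma$ and $\gamma^-$ are smooth on $X$ and $X^-$ respectively. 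Each application produces the expected interior contribution $\pm\int d\gamma\wedge\phi=\pm\int\alpha\wedge\phi$ (and the analogue on $X^-$) together with a boundary integral over $\partial X$.

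The crux is to show that the two boundary integrals cancel. They are both of the form $\int_{\partial X}(\iota^*\gamma)\wedge(\iota^*\phi)$, where $\iota$ is inclusion of the boundary, but they are taken with the two opposite orientations that $\partial X$ inherits as the boundary of $X$ and the boundary of $X^-$ inside $\widetilde X$. I would make this precise in collar coordinates $(x,t)\in\partial X\times[0,1)$ on $V$, decomposing $\gamma=\gamma_T(x,t)+dt\wedge\gamma_N(x,t)$, so that $\iota^*\gamma=\gamma_T(x,0)$. On $V^-$, the negation identification forces $\gamma^-=\gamma_T(x,-t)-dt\wedge\gamma_N(x,-t)$, whose tangential restriction to $\{t=0\}$ is again $\gamma_T(x,0)$. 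Hence the two boundary integrands agree pointwise while the orientations disagree, and the boundary terms cancel, yielding the claim.

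The main obstacle is purely bookkeeping: tracking the orientation of $\partial X$ inside $\widetilde X$ and the sign produced by $dt\mapsto-dt$ under the reflection. The underlying ``miracle'' is that the jump of $\tilde\gamma$ across $\partial X$ lives entirely in the $dt$-direction while $\iota^*dt=0$; equivalently, in the decomposition $d=d_{\partial X}+dt\wedge\partial_t$ on the collar, any delta function that $\partial_t$ would like to produce from the discontinuous normal coefficient is immediately killed by the extra factor of $dt$ (since $dt\wedge dt=0$). Either viewpoint shows that no singular contribution along $\partial X$ appears, so $d\tilde\gamma=\tilde\alpha$ distributionally on all of $\widetilde X$, as claimed.
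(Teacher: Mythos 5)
Your proof is correct and is essentially the paper's argument: test against a compactly supported form, split the integral over $X$ and $X^-$, apply Stokes' theorem on each piece, and observe that the two boundary integrals over $\partial X$ cancel because the induced orientations are opposite while $\gamma$ and $\gamma^-$ pull back to the same form on $\partial X$ (your collar-coordinate computation just makes this last point explicit). The only cosmetic difference is that the paper phrases the distributional identity by localizing to orientable open subsets $U\subseteq\widetilde{X}$, since $X$ is not assumed orientable, whereas you pair globally over $\widetilde{X}$; your argument goes through verbatim after this localization.
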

		
		The miracle here is that $d\tilde{\gamma}$ does not have a singular part supported on $\partial X$, even though it is discontinuous on $\partial X$. The reason for this is that the singular parts of $d\gamma$ and $d \gamma^{-}$ cancel each other. To be pedantic, here $d\gamma$ denotes the exterior derivative of the extension by zero of $\gamma$ to $\widetilde{X}$, and similarly for $d\gamma^-$; of course the exterior derivative on $X$ of $\gamma$ is smooth.
		
		\begin{proof}[Proof of Claim]
			The statement that $d\tilde{\gamma} = \tilde{\alpha}$ as distributions is equivalent to the statement that for any orientable open subset $U \subseteq \widetilde{X}$ and any $\varphi \in C_c^{\infty}(U, \wedge^{n-r} T^*U)$,
			\begin{align} \label{eqn:d_dist_def}
				\int_U \tilde{\alpha} \wedge \varphi
				= (-1)^r \int_U \tilde{\gamma} \wedge d\varphi,
			\end{align}
			where the integrals are computed with respect to some common choice of orientation on $U$. We compute
			\begin{align*}
				\int_U \tilde{\alpha} \wedge \varphi
				&= \int_{U \cap X} \alpha \wedge \varphi + \int_{U \cap X^-} \alpha^- \wedge \varphi
				\\&= \int_{U \cap X} d\gamma \wedge \varphi + \int_{U \cap X^-} d\gamma^- \wedge \varphi
				\\&= \int_{\partial(U \cap X)} \gamma \wedge \varphi
				+ (-1)^r \int_{U \cap X} \gamma \wedge d\varphi
				+ \int_{\partial(U \cap X^-)} \gamma^- \wedge \varphi
				+ (-1)^r \int_{U \cap X^-} \gamma^- \wedge d\varphi.
			\end{align*}
			Since $U \cap X$ and $U \cap X^-$ both have boundary $U \cap \partial X$, but with opposite orientations, and $\gamma,\gamma^-$ pull back to the same form on $\partial X$, the boundary terms cancel. We conclude \eqref{eqn:d_dist_def}.
		\end{proof}

		Let $\Delta = dd^* + d^*d$ be the Hodge Laplacian on $\widetilde{X}$, defined with respect to some fixed choice of smooth Riemannian metric. By expanding the $L^2$ form $\tilde{\alpha}$ in an eigenbasis for $\Delta$, we can write
		\begin{align*}
			\tilde{\alpha} = h + \Delta\omega,
		\end{align*}
		where $h,\omega$ are $r$-forms, $h$ is harmonic, and $\omega$ is orthogonal to all harmonic forms on $\widetilde{X}$. By elliptic regularity,
		\begin{align} \label{eqn:omega_ell_reg}
			\|\omega\|_{W^{2,2}(\widetilde{X})}
			\lesssim \|\tilde{\alpha}\|_{L^2(\widetilde{X})}
			\sim \|\alpha\|_{L^2(X)}.
		\end{align}
		So far, we have the equations
		\begin{align} \label{eqn:Hodge_decomp_1}
			d\tilde{\gamma} = \tilde{\alpha} = h + d^*d\omega + dd^*\omega.
		\end{align}
		Until now, our $L^2$ norms have only really been defined up to constants, but from now on let us fix the $L^2$ inner product of forms on $\widetilde{X}$ to be the Hodge inner product $\langle \cdot,\cdot \rangle$ coming from our choice of metric. Then by integration by parts,
		\begin{align*}
			\|h\|_{L^2(\widetilde{X})}^2
			= \langle d(\tilde{\gamma} - d^*\omega), h \rangle - \langle d^*d\omega, h \rangle
			= 0,
		\end{align*}
		so $h = 0$. Therefore \eqref{eqn:Hodge_decomp_1} simplifies to
		\begin{align*}
			d\tilde{\gamma}
			= \tilde{\alpha}
			= d^*d\omega + dd^*\omega.
		\end{align*}
		Integrating by parts again (which is justified by approximating $\omega$ by smooth forms in the $W^{2,2}$ norm),
		\begin{align*}
			\|d^*d\omega\|_{L^2(\widetilde{X})}^2
			= \langle d(\tilde{\gamma} - d^*\omega), d^*d\omega \rangle = 0,
		\end{align*}
		so $d^*d\omega = 0$. Thus $\tilde{\alpha} = dd^*\omega$.
		
		Denote $\beta = d^*\omega|_{\Int X}$. Then $\beta$ is smooth, because $\omega$ is smooth in $\Int X$ by elliptic regularity. By the conclusion of the paragraph above, $d\beta = \alpha$. Finally, \eqref{eqn:omega_ell_reg} implies the desired bound \eqref{eqn:non-cpt_primitive_bound}.
	\end{proof}
	
	\begin{prop}[Compactly supported case] \label{prop:cpt}
		Let $\alpha \in L_c^2(\Int X, \wedge^r T^*X)$ be the exterior derivative of some form in $C_c^{-\infty}(\Int X, \wedge^{r-1} T^*X)$. Then there exists $\beta \in L_c^2(\Int X, \wedge^{r-1} T^*X)$ such that $d\beta = \alpha$ and
		\begin{align} \label{eqn:cpt_primitive_bound}
			\|\beta\|_{L^2(X)}
			\lesssim \|\alpha\|_{L^2(X)}.
		\end{align}
	\end{prop}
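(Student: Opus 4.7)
The plan is to start with the primitive $\beta_0$ produced by Proposition~\ref{prop:non-cpt}, which may not have compact support, and correct it near $\partial X$ to produce a compactly supported $\beta$ with the same $L^2$ bound. The hypothesis that $\alpha$ admits a compactly supported distributional primitive is the topological input---it says exactly that $[\alpha] = 0$ in $H^r_c(\Int X;\R)$---that makes this correction possible.

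Fix a collar neighborhood $V \cong \partial X \times [0,1] \subset X$ of $\partial X$, chosen small enough that $V$ is disjoint both from $\supp \alpha$ and from the support of the compactly supported distributional primitive $\gamma$ given by hypothesis (both are compact in $\Int X$, so this is possible). Apply Proposition~\ref{prop:non-cpt} to obtain a smooth primitive $\beta_0$ of $\alpha$ on $\Int X$ with $\|\beta_0\|_{L^2(X)} \lesssim \|\alpha\|_{L^2(X)}$. Since $V \cap \supp\alpha = \emptyset$, the form $\beta_0|_V$ is closed; and since $(\beta_0 - \gamma)|_V = \beta_0|_V$ while $\beta_0 - \gamma$ is a closed distribution on all of $\Int X$, the cohomology class $[\beta_0|_V] \in H^{r-1}(V;\R)$ lies in the image of the restriction map $H^{r-1}(X;\R) \to H^{r-1}(V;\R)$. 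Equivalently, the connecting homomorphism $H^{r-1}(V;\R) \to H^r(X,V;\R) \cong H^r_c(\Int X;\R)$ sends $[\beta_0|_V]$ to $[\alpha]$, which vanishes by hypothesis.

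Because $H^{r-1}(X;\R)$ is finite-dimensional, any two norms on it are equivalent; pick a closed smooth form $\zeta$ on $X$ from a fixed finite-dimensional subspace of representatives (e.g.\ harmonic forms for an auxiliary metric) with $[\zeta|_V] = [\beta_0|_V]$ and $\|\zeta\|_{L^2(X)} \lesssim \|\beta_0\|_{L^2(V)} \lesssim \|\alpha\|_{L^2(X)}$. Then $(\beta_0 - \zeta)|_V$ is exact on the compact manifold $V$, so another application of Proposition~\ref{prop:non-cpt}---this time to $V$---yields an $(r-2)$-form $\eta$ on $\Int V$ with $d\eta = (\beta_0 - \zeta)|_V$ and $\|\eta\|_{L^2(V)} \lesssim \|\alpha\|_{L^2(X)}$.

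Finally, choose a smooth cutoff $\chi\colon X \to [0,1]$ supported in $V$ and equal to $1$ on a smaller subcollar of $\partial X$, and set
\[
    \beta := (\beta_0 - \zeta) - d(\chi\eta).
\]
Then $d\beta = d(\beta_0 - \zeta) = \alpha$, and on the region where $\chi \equiv 1$ one has $\beta = (\beta_0 - \zeta) - d\eta = 0$, so $\beta$ has compact support in $\Int X$. Rewriting $\beta = (1-\chi)(\beta_0 - \zeta) - d\chi \wedge \eta$ and combining the bounds above yields $\|\beta\|_{L^2(X)} \lesssim \|\alpha\|_{L^2(X)}$. The main subtlety is the cohomological step of lifting $[\beta_0|_V]$ to a class on $X$; the case $r=1$ must be treated separately but is easier, since then $(\beta_0-\zeta)|_V$ being exact simply means it vanishes, and one may take $\beta = \beta_0 - \zeta$ without any cutoff.
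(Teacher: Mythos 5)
Your overall plan---correct the non-compactly-supported primitive $\beta_0$ from Proposition~\ref{prop:non-cpt} on a boundary collar, using that $[\alpha]=0$ in $H^r_c(\Int X;\R)$---is sound, and the cohomological bookkeeping (lifting $[\beta_0|_V]$ through $H^{r-1}(X;\R)\to H^{r-1}(V;\R)$, bounding the lift $\zeta$ by finite-dimensionality) is fine. The genuine gap is that both invocations of Proposition~\ref{prop:non-cpt} are outside its hypotheses, and the second one is not a mere formality. First, Proposition~\ref{prop:non-cpt} takes as input a smooth form admitting a \emph{smooth} primitive on $X$, whereas your $\alpha$ is only $L^2_c$ with a compactly supported \emph{distributional} primitive; producing $\beta_0$ already requires extending that proposition (this regularity issue is precisely why the paper's own proof does not apply Proposition~\ref{prop:non-cpt} to $\alpha$ directly). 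Second, and more seriously: for the cutoff step you need the corrector $\eta$ to be defined on an open set whose closure contains $\partial X$, since $\beta$ must vanish on $\partial X\times(0,\delta)$. That forces you to apply Proposition~\ref{prop:non-cpt} to the full collar $V=\partial X\times[0,1]$, but $(\beta_0-\zeta)|_V$ is only known to be smooth on $V\cap\Int X$ and $W^{1,2}$ up to $\partial X\times\{0\}$ (Proposition~\ref{prop:non-cpt} gives smoothness of $\beta_0$ only in $\Int X$), so the hypotheses---a smooth form on the compact collar with a smooth primitive there---are not met. If instead you shrink to $\partial X\times[\eps,1]\subset\Int X$ so that the proposition applies, then $\eta$ lives only on $\partial X\times(\eps,1)$, the region where $\chi\equiv 1$ is not contained in its domain, and the resulting $\beta$ no longer vanishes near $\partial X$; shrinking the collar destroys exactly the property you need.

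The gap is repairable, but only by reworking Proposition~\ref{prop:non-cpt} rather than citing it: for instance, since $\supp\alpha$ and $\supp\gamma$ avoid a neighborhood of $\partial X$, the doubling argument in its proof still gives $\tilde\alpha=dd^*\omega$ with $\Delta\omega=\tilde\alpha=0$ near the gluing locus, so $\beta_0=d^*\omega$ is in fact smooth up to (indeed across) $\partial X$ by elliptic regularity, after which your second application becomes legitimate; alternatively one can run the proposition on the collars $\partial X\times[\eps,1]$ with constants uniform in $\eps$ and take a weak $L^2$ limit of the primitives. Note that the paper takes a genuinely different route that sidesteps all of these regularity questions: it defines the functional $\ell(d^*\varphi)=\langle\varphi,\alpha\rangle$ on $d^*C_c^{\infty}(\Int X,\wedge^r T^*X)$, bounds it by applying the $d^*$-analogue of Proposition~\ref{prop:non-cpt} to the \emph{smooth} test forms $\varphi$ on a precompact open set $U\supset\supp\gamma$, extends by Hahn--Banach, and represents the extension by a distribution $\beta$ automatically supported in $\overline U$ and satisfying the $L^2$ bound. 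Your approach, once patched, buys a concrete, constructive primitive (and extra regularity away from a collar), while the paper's duality argument buys brevity and indifference to the low regularity of $\alpha$ and $\gamma$.
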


	Again, the proof could be tweaked to give better regularity for $\beta$, but we need not bother.
	
	\begin{proof}
		Fix some choice of smooth Riemannian metric on $X$, normalize the $L^2$ inner product of forms on $X$ to be the Hodge inner product $\langle \cdot,\cdot \rangle$ coming from this metric, and let $d^*$ be the formal adjoint of $d$ with respect to $\langle \cdot,\cdot \rangle$, as usual. Write $\alpha = d\gamma$ for some distributional form $\gamma$ supported in $\Int X$. Let $\ell$ be the linear functional on $d^*C_c^{\infty}(\Int X, \wedge^r T^*X)$ given by
		\begin{align} \label{eqn:l_def}
			\ell(d^*\varphi)
			= \langle d^*\varphi, \gamma \rangle
			= \langle \varphi,\alpha \rangle.
		\end{align}
		Fix an open neighborhood $U$ of the support of $\gamma$, such that $U$ is precompact in $\Int(X)$, and $U$ has smooth boundary. For each $\varphi$, the analog of Proposition \ref{prop:non-cpt} for $d^*$ instead of $d$ (which reduces to Proposition \ref{prop:non-cpt} by writing $d^* = \pm\star d \star$) yields a $\psi \in C^{\infty}(U, \wedge^{r-1}T^*U)$ with $d^*\psi = d^*\varphi$ and
		\begin{align*}
			\|\psi\|_{L^2(U)} \lesssim_U \|d^*\varphi\|_{L^2(U)}.
		\end{align*}
		Furthermore, if we choose $U$ to be almost all of $X$, so that there is a diffeomorphism between $\overline{U}$ and $X$ which is close to an isometry, then we can remove the dependence of the implicit constant on $U$. We now estimate
		\begin{align*}
			|\ell(d^*\varphi)|
			= |\langle d^*\varphi, \gamma \rangle|
			= |\langle d^*\psi, \gamma \rangle|
			= |\langle \psi,\alpha \rangle|
			\leq \|\alpha\|_{L^2(X)} \|\psi\|_{L^2(U)}
			\lesssim \|\alpha\|_{L^2(X)} \|d^*\varphi\|_{L^2(U)}.
		\end{align*}
		Using Hahn--Banach, extend $\ell$ to a linear functional on $C_c^{\infty}(\Int X, \wedge^{r-1} T^*X)$ while preserving the bound
		\begin{align} \label{eqn:l_bd}
			|\ell(\eta)|
			\lesssim \|\alpha\|_{L^2(X)} \|\eta\|_{L^2(U)}.
		\end{align}
		This functional is represented by a distribution $\beta \in C^{-\infty}(\Int X, \wedge^{r-1} T^*X)$. It follows from \eqref{eqn:l_def} that $d\beta = \alpha$, and from \eqref{eqn:l_bd} we obtain \eqref{eqn:cpt_primitive_bound} as well as the fact that $\beta$ is supported on the compact subset $\overline{U}$ of $\Int X$.
	\end{proof}

\begin{section}{Proof of \texorpdfstring{\cref{thm:1}}{thmone}}\label{sec.pf:1}

If $d\beta$ = $\alpha$, we say that $\beta$ fills $\alpha$. If $d^* \beta=\alpha$, we say that $\beta$ cofills $\alpha$. We usually want to find fillings or cofillings of small norm. Recall the following characterizations of the spectral gap:

\begin{lem}\label{lem:dualchar}
    The following are equivalent for a closed 3-manifold:
    \begin{enumerate}
        \item Every exact (resp. closed) 2-form $\alpha$ has a filling of norm $\leq c\norm{\alpha}_{L^2}$.
       \item Every coexact (resp. coclosed) 1-form $\gamma$ has a cofilling of norm $\leq c\norm{\gamma}_{L^2}$.
        \item $\norm{d\gamma}_{L^2} \geq \frac 1 c\norm{\gamma}_{L^2}$ for every coexact (resp. coclosed) 1-form $\gamma$.
        \item The spectral gap for the Laplacian acting on coexact (resp. coclosed) 1-forms is $\frac 1 {c^2}$.
    \end{enumerate}
\end{lem}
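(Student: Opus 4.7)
The plan is to handle (1) $\Leftrightarrow$ (2) by the Hodge star, (3) $\Leftrightarrow$ (4) by the Rayleigh quotient, and (1) $\Leftrightarrow$ (3) as the main duality between existence of small primitives of 2-forms and a lower bound for $d$ on 1-forms. I will carry out the coexact/exact case; the coclosed/closed versions then reduce to it by peeling off the finite-dimensional space of harmonic 1-forms, on which the cofilling question is vacuous (harmonic forms lie in $\Ker d \cap \Ker d^*$) and which the gap in (4) ignores by definition.

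For (1) $\Leftrightarrow$ (2), I would use that on a closed oriented Riemannian 3-manifold $\star$ is an $L^2$-isometry with $\star\star = \id$ and $d^* = \pm \star d \star$. Hence $\star$ exchanges exact 2-forms with coexact 1-forms bijectively, and the equation $d\beta = \alpha$ transports to $d^*(\star\beta) = \pm\star\alpha$ with $L^2$ norms preserved; this gives the equivalence with the same constant $c$. For (3) $\Leftrightarrow$ (4), I would note that on coexact 1-forms the Hodge Laplacian reduces to $d^*d$, so $\langle \Delta\gamma,\gamma\rangle = \|d\gamma\|_{L^2}^2$, and by the min-max principle the bottom of the spectrum of $\Delta$ on the coexact subspace equals $\inf_{\gamma} \|d\gamma\|_{L^2}^2/\|\gamma\|_{L^2}^2$. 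Thus (3) with constant $c$ and (4) with gap $1/c^2$ are literally the same statement.

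The core step is (1) $\Leftrightarrow$ (3), and its engine is a minimality observation: any two primitives of an exact 2-form $\alpha$ differ by a closed 1-form, so the unique coexact primitive produced by the Hodge decomposition is orthogonal in $L^2$ to $\Ker d$ and therefore has the smallest $L^2$ norm among all primitives of $\alpha$. For (3) $\Rightarrow$ (1), given exact $\alpha$ I would take $\gamma$ to be its coexact primitive and apply (3) to obtain $\|\gamma\|_{L^2} \leq c\|d\gamma\|_{L^2} = c\|\alpha\|_{L^2}$, so $\gamma$ is the required small filling. For (1) $\Rightarrow$ (3), given coexact $\gamma$ the 2-form $d\gamma$ is exact, so (1) supplies a filling $\beta$ with $\|\beta\|_{L^2} \leq c\|d\gamma\|_{L^2}$, and minimality of the coexact primitive then gives $\|\gamma\|_{L^2} \leq \|\beta\|_{L^2} \leq c\|d\gamma\|_{L^2}$. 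There is no serious obstacle in the argument; the only point I would state with care is this minimality property of the coexact primitive, since it is exactly what makes the constants between (1) and (3) match.
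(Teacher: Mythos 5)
Your coexact-case argument is correct, and it takes a genuinely different route from the paper's for the substantive implications. You prove $(1)\Leftrightarrow(3)$ directly, with the key input being that the coexact primitive furnished by the Hodge decomposition is the $L^2$-minimal primitive of an exact $2$-form (any two primitives differ by a closed form, and coexact forms are orthogonal to closed forms). The paper instead proves $(2)\Rightarrow(3)$ by the pairing $\norm{\gamma}_{L^2}^2=\inner{\gamma}{d^*\beta}=\inner{d\gamma}{\beta}$ and Cauchy--Schwarz, and $(4)\Rightarrow(2)$ by cofilling each eigenform $\gamma$ of eigenvalue $\lambda\geq 1/c^2$ by $\tfrac1\lambda d\gamma$ and then summing over an orthogonal eigenbasis. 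The two routes are of comparable length; yours isolates the minimality property, which makes the matching of constants between $(1)$ and $(3)$ transparent and avoids the spectral expansion, while the paper's leans on the spectral theorem and never needs to identify a distinguished primitive. Your $(1)\Leftrightarrow(2)$ via the Hodge star and $(3)\Leftrightarrow(4)$ via the Rayleigh quotient coincide with the paper's.

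However, your one-sentence disposal of the coclosed/closed version is wrong as justified. For a nonzero harmonic $1$-form $h$ the cofilling question is not vacuous but unsatisfiable: $h$ is orthogonal to every coexact form, so $d^*\beta=h$ has no solution; statement $(3)$ likewise fails at $h$ since $dh=0$; and under the paper's convention the coclosed spectrum contains the eigenvalue $0$ with multiplicity $b_1(M)$, so $(4)$ does not ``ignore'' harmonic forms --- asserting that the coclosed gap equals $1/c^2>0$ precisely excludes them. If your parenthetical were taken literally, then on a manifold with $b_1(M)>0$ possessing a coexact gap the coclosed versions of $(2)$ and $(4)$ would hold while $(1)$ and $(3)$ would fail, i.e.\ the equivalence would break. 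The correct reduction is the opposite of an exemption: each of the four coclosed/closed statements forces the harmonic $1$-forms to vanish (for $(1)$, every closed $2$-form being exact gives $H^2(M;\mathbb{R})=0$, hence $b_1(M)=0$ by Poincar\'e duality; for $(2)$--$(4)$ the observations above apply), after which coclosed $=$ coexact, closed $=$ exact, and your coexact-case argument applies verbatim. Alternatively, as in the paper, one can simply note that all four implications run word for word with ``coexact/exact'' replaced by ``coclosed/closed.''
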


As in the previous section, $\|\cdot\|_{L^2}$ is the Hodge norm on forms.

\begin{proof}
    $(1) \iff (2)$ because the Hodge star preserves norms.
    $(3) \iff (4)$ because both are characterizations of the least nonzero singular value of $d$. (3) is the variational characterization and (4) is the characterization as the smallest nonzero eigenvalue of $d^*d$.

    Let's first prove $(2) \implies (3)$. Suppose $\gamma$ is a coexact (resp. coclosed) 1-form. Assuming (2), we can find a cofilling $\beta$ for $\gamma$ with $\norm{\beta}_{L^2} \leq c \norm{\gamma}_{L^2}$. Then
    \begin{align*}
        \norm{\gamma}_{L^2}^2 &= \inner{\gamma}{d^*\beta}\\
        &= \inner{d\gamma}{\beta} \\
        &\leq \norm{d\gamma}_{L^2} \cdot \norm{\beta}_{L^2} \\
        &\leq \norm{d\gamma}_{L^2} \cdot c\norm{\gamma}_{L^2}.
    \end{align*}
    Therefore
    \begin{align*}
        \norm{d \gamma}_{L^2} &\geq \frac 1 c \norm{\gamma}_{L^2}.
    \end{align*}

    Now we prove $(4) \implies (2)$. First we show how to cofill the eigenforms for the Laplacian. If $\gamma$ is a coexact (resp. coclosed) eigenform of the Laplacian of eigenvalue $\lambda\geq 1/c^2$, then $\frac 1 \lambda d\gamma$ is a cofilling of $\gamma$, and
    \begin{align*}
        \sqnorm{\frac 1 {\lambda} d\gamma}_{L^2} &= \frac 1 {\lambda^2} \inner {d\gamma}{d\gamma}\\
        &= \frac 1 {\lambda^2} \inner {d^* d \gamma}{\gamma}\\
        &= \frac 1 \lambda \sqnorm{\gamma}_{L^2} \\
        &\leq c^2 \sqnorm{\gamma}_{L^2}.
    \end{align*}
    So $\frac 1 \lambda d\gamma$ is an efficient cofilling. Any other 1-form can be expressed as a sum of orthogonal eigenforms, and hence may also be efficiently cofilled.
\end{proof}

 Our strategy for proving \cref{thm:1} is local to global; we prove local cofilling inequalities and deduce global cofilling inequalities. \cref{lem:splitting} and \cref{lem:filling} below show how to find some efficient local cofillings.


\begin{lem}[Splitting lemma]\label{lem:splitting}
    Suppose we have a Riemannian metric on $\Sigma \times (-\eps,\eps)$, a collar neighborhood of a surface. Given an exact 2-form $\alpha$ on $\Sigma \times (-\eps,\eps)$, there is an exact 2-form $\alpha'$ such that $\alpha-\alpha'=d\beta$ for some $\beta$ of compact support, $\alpha'=0$ on $\Sigma \times (-\eps/3,\eps/3)$, and
    \begin{align*}
        \|\beta\|_{L^2}
        \lesssim \|\alpha\|_{L^2(\Sigma \times (-\varepsilon,\varepsilon))}
        \qquad \text{and} \qquad
        \|\alpha-\alpha'\|_{L^2}
        \lesssim \|\alpha\|_{L^2(\Sigma \times (-\varepsilon,\varepsilon))},
    \end{align*}
    where the implicit constants depend only on the geometry of $\Sigma \times (-\eps,\eps)$.
\end{lem}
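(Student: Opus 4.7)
The plan is to produce $\beta$ by multiplying a suitable primitive of $\alpha$ by a bump function; the main analytic input is Proposition~\ref{prop:non-cpt}, which supplies an $L^2$-controlled primitive on a compact manifold with boundary.

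First, I would set $X = \Sigma \times [-2\eps/3, 2\eps/3]$, viewed as a compact manifold with boundary with the induced Riemannian metric. Since $\alpha$ is exact on the larger collar, it remains exact on $X$ in the sense required by Proposition~\ref{prop:non-cpt}, so that proposition yields a smooth $1$-form $\gamma$ on $\Int(X)$ with $d\gamma = \alpha$ and
\begin{align*}
\|\gamma\|_{L^2(X)} \leq \|\gamma\|_{W^{1,2}(X)} \lesssim \|\alpha\|_{L^2(X)} \leq \|\alpha\|_{L^2(\Sigma \times (-\eps,\eps))}.
\end{align*}
Next, I would fix once and for all a smooth cutoff $\chi : (-\eps,\eps) \to [0,1]$, depending only on the $(-\eps,\eps)$-coordinate, with $\chi \equiv 1$ on $(-\eps/3,\eps/3)$ and $\supp \chi \subset (-\eps/2,\eps/2)$, and then define $\beta := \chi \gamma$ (extended by zero) and $\alpha' := \alpha - d\beta$. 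Since $\supp \chi$ lies compactly inside $\Int(X)$, on which $\gamma$ is smooth, $\beta$ is a smooth, compactly supported $1$-form on $\Sigma \times (-\eps,\eps)$, and $\alpha'$ is exact as a difference of exact forms.

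By the Leibniz rule, $d\beta = d\chi \wedge \gamma + \chi \, d\gamma = d\chi \wedge \gamma + \chi \alpha$, which equals $\alpha$ wherever $\chi \equiv 1$, so $\alpha'$ vanishes on the middle slab $\Sigma \times (-\eps/3, \eps/3)$. The required bounds then follow immediately: $\|\beta\|_{L^2} \leq \|\gamma\|_{L^2(X)} \lesssim \|\alpha\|_{L^2(\Sigma \times (-\eps,\eps))}$, and
\begin{align*}
\|\alpha - \alpha'\|_{L^2} = \|d\beta\|_{L^2} \leq \|d\chi\|_{L^\infty} \|\gamma\|_{L^2(X)} + \|\alpha\|_{L^2(X)} \lesssim \|\alpha\|_{L^2(\Sigma \times (-\eps,\eps))},
\end{align*}
with implicit constants depending only on the ambient geometry and on the fixed choice of $\chi$. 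I do not anticipate any real obstacle: Proposition~\ref{prop:non-cpt} does the heavy lifting, and the rest is a routine cutoff manipulation. The one point demanding mild care is ensuring that $\gamma$ is defined on a neighborhood of $\supp \chi$, which is why I apply Proposition~\ref{prop:non-cpt} on the slightly larger slab $X = \Sigma \times [-2\eps/3, 2\eps/3]$ rather than on $\Sigma \times [-\eps/2, \eps/2]$.
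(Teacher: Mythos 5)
Your proposal is correct and follows essentially the same route as the paper's proof: apply Proposition~\ref{prop:non-cpt} on a slightly smaller closed slab (the paper uses $\Sigma \times [-3\eps/4,3\eps/4]$, you use $\Sigma \times [-2\eps/3,2\eps/3]$, an immaterial difference), set $\beta = \chi\gamma$ for a cutoff $\chi$ equal to $1$ on the middle third, and estimate $\|\beta\|_{L^2}$ and $\|d(\chi\gamma)\|_{L^2}$ via the Leibniz rule. No gaps.
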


\begin{proof}
    Applying \cref{prop:non-cpt} with $X= {\Sigma \times [-3\eps/4,3\eps/4]}$, 
    we can choose a $1$-form $\gamma$ on $\Sigma \times (-3\varepsilon/4, 3\varepsilon/4)$ with $d\gamma = \alpha$ and 
    $$ \norm{\gamma}_{L^2(\Sigma \times (-3\eps/4,3\eps/4))} \lesssim \norm{\alpha}_{L^2(\Sigma \times (-\varepsilon,\varepsilon))}. $$
    Choose a standard, smooth cut-off function $\chi: \Sigma \times (-\eps,\eps) \to [0,1]$ such that $\chi \equiv 1$ on $\Sigma \times (-\eps/3,\eps/3)$ and $\chi \equiv 0$ on $\Sigma \times \{t\}$ for $2\eps/3\leq |t| \leq1$. Let $\beta = \chi\gamma$ and
    $$ \alpha' = \alpha - d\beta. $$
    Then clearly $\beta$ is of compact support, and $\alpha' = 0$ on $\Sigma \times (-\varepsilon/3,\varepsilon/3)$. It remains to estimate $\|\beta\|_{L^2}$ and $\norm{\alpha-\alpha'}_{L^2}$. The desired estimates follow easily from the $L^2$ bound on $\gamma$ in terms of $\alpha$:
    \begin{align*}
        \|\beta\|_{L^2}
        = \|\chi\gamma\|_{L^2}
        \lesssim \|\alpha\|_{L^2(\Sigma \times (-\varepsilon,\varepsilon))}
    \end{align*}
    and
    \begin{align*}
        &\norm{\alpha-\alpha'}_{L^2}
        = \|d(\chi\gamma)\|_{L^2}
        \leq \|d\chi \wedge \gamma\|_{L^2} + \|\chi\alpha\|_{L^2}
        \lesssim \|\alpha\|_{L^2(\Sigma \times (-\varepsilon,\varepsilon))}.
        \qedhere
    \end{align*}
    
\end{proof}

Now we return our attention to the hyperbolic 3-manifolds $M = M_N$ from Section \ref{sec:construction} which are built out of a sequence of blocks $B_0,\dots,B_{N+1}$ glued together end to end. 
\begin{lem}[Local filling inequality]\label{lem:filling}
    For any $i$ and any closed 2-form $\alpha$ supported in the interior of $B_i$, there is a 1-form $\beta$ supported in the interior of $B_{i-1}\cup B_i \cup B_{i+1}$, such that $\alpha = d\beta$ and $\norm{\beta}_{L^2} \lesssim \norm{\alpha}_{L^2}$. The implicit constant here is uniform in $i$ and $N$.
\end{lem}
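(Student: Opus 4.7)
The plan is to reduce the problem to a uniform filling estimate on a standard compact model and then invoke \cref{prop:cpt}. Set $W_i := B_{i-1}\cup B_i\cup B_{i+1}$. Because every gluing in our construction is twisted by the same power $\varphi^k$, \cref{bmnsmainthm} together with \cref{prop:rbounded} supplies a $K$-bilipschitz diffeomorphism $F_i:Y\to W_i$, where $Y$ is one of a finite list of standard compact models (one for the generic interior position, and at most two more for end positions) and $K$ is independent of $i$ and $N$. It therefore suffices to prove the filling inequality with an absolute constant on each such $Y$, for closed $2$-forms supported in the preimage of $\mathrm{Int}(B_i)$, and then to transport through $F_i$, which distorts $L^2$ norms of forms by at most a uniform factor.

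On the fixed model $Y$, \cref{prop:cpt} will produce a compactly supported $\beta\in L^2_c(\mathrm{Int}(Y);\wedge^1 T^*Y)$ with $d\beta = \alpha$ and $\|\beta\|_{L^2}\lesssim \|\alpha\|_{L^2}$, provided we know that $\alpha$ is $d$ of \emph{some} compactly supported distributional $1$-form on $\mathrm{Int}(Y)$, i.e.\ that $[\alpha]=0$ in $H^2_c(\mathrm{Int}(Y);\mathbb{R})$. The whole argument thus reduces to verifying this cohomological vanishing, which is the one place where the block geometry enters, through \cref{prop:hknd}.

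To see that $[\alpha]=0$, I will test it against the perfect Poincar\'e--Lefschetz pairing
\begin{align*}
    H^1(W_i;\mathbb{R})\times H^2_c(\mathrm{Int}(W_i);\mathbb{R})\longrightarrow \mathbb{R}, \qquad ([\omega],[\alpha])\longmapsto \int_{W_i}\omega\wedge\alpha.
\end{align*}
Since $\alpha$ is supported in $B_i$, the integral reduces to $\int_{B_i}\omega|_{B_i}\wedge\alpha$. By \cref{prop:hknd} the restriction $\omega|_{B_i}$ is exact, say $\omega|_{B_i}=df$ for some smooth function $f$ on $B_i$, and since $\alpha$ is closed and vanishes near $\partial B_i$, Stokes gives
\begin{align*}
    \int_{B_i} df\wedge\alpha = \int_{B_i} d(f\alpha)-\int_{B_i} f\,d\alpha = 0.
\end{align*}
Nondegeneracy of the pairing then forces $[\alpha]=0$, and the lemma follows. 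The main obstacle is precisely this cohomological step: without the triviality of the restriction map in \cref{prop:hknd}, a closed $2$-form concentrated in a single block could represent a nonzero class in $H^2_c(\mathrm{Int}(W_i))$ and no compactly supported primitive would exist. Everything else is a clean packaging of \cref{bmnsmainthm} and \cref{prop:cpt}.
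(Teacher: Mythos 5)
Your proof is correct and follows essentially the same route as the paper: reduce via the uniform bilipschitz model metrics (finitely many, from \cref{bmnsmainthm} and \cref{prop:rbounded}) and then apply \cref{prop:cpt} once the class of $\alpha$ is known to vanish in $H^2_c$ of the triple of blocks. The only difference is that you spell out, via the Poincar\'e--Lefschetz pairing and Stokes, why \cref{prop:hknd} implies this compactly supported cohomological vanishing, a step the paper simply asserts; your verification of it is accurate.
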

\begin{proof}
    By \cref{prop:hknd}, $\alpha$ vanishes in compactly supported cohomology of $B_{i-1}\cup B_i \cup B_{i+1}$. The hyperbolic metric $g_{\text{hyp}}$ on $B_{i-1}\cup B_i \cup B_{i+1}$ is $K$-bilipschitz to a model metric $g_{\text{model}}$, where $K$ is independent of $i$ and $N$, and $g_{\text{model}}$ ranges over only finitely many metrics as $i,N$ vary. More precisely, $g_{\text{model}}$ is independent of $N$, and there are three possible choices for $g_{\text{model}}$ depending on whether $B_{i-1} \cup B_i \cup B_{i+1}$ is on the left end, in the middle, or on the right end of the manifold $M$. Denote the $L^2$ norms with respect to $g_{\text{model}}$ and $g_{\text{hyp}}$ by $\|\cdot\|_{\text{model}}$ and $\|\cdot\|_{\text{hyp}}$, respectively.

    Apply \cref{prop:cpt} to the exact 2-form $\alpha$ with metric $g_{\text{model}}$. This gives a 1-form $\beta$ supported in $B_{i-1}\cup B_i \cup B_{i+1}$ with $d\beta= \alpha$ and
    $$\norm{\beta}_{\text{model}} \lesssim \norm{\alpha}_{\text{model}}.$$
    Since $g_{\text{model}}$ and $g_{\text{hyp}}$ are bilipschitz (uniformly in $i,N$), this is equivalent to
    \begin{align*}
        &\norm{\beta}_{\text{hyp}} \lesssim \norm {\alpha}_{\text{hyp}}.
        \qedhere
    \end{align*}
\end{proof}

\begin{proof}[Proof of \cref{thm:1}]    
    Let $\alpha$ be any exact 2-form on $M$. By characterization 1 in \cref{lem:dualchar}, it suffices to find a primitve $\beta$ for $\alpha$ with $\|\beta\|_{L^2} \lesssim \|\alpha\|_{L^2}$, where here and in the remainder of this proof, the implied constant depends only on the family $\{M_N\}_N$ and hence is independent of all parameters (in particular independent of $N$).

        For each $i$, let $\Sigma_i$ be a surface separating $B_i$ from $B_{i+1}$. By \cref{lem:splitting} applied to tubular neighbourhoods of all of the $\Sigma_i$ (which we can take to all be uniformly bilipschitz to some model tubular neighborhood), there is a 1-form $\beta_1$ supported in the union of the tubular neighborhoods such that
	\begin{enumerate}
		\item[(i)] $\alpha = d \beta_1$ pointwise in a neighbourhood of the $\Sigma_i$'s,
		\item[(ii)] $\norm {\beta_1}_{L^2} \lesssim \norm{\alpha}_{L^2}$,
		\item[(iii)] $\norm{d \beta_1}_{L^2} \lesssim \norm{\alpha}_{L^2}$.
	\end{enumerate}

    As written, \cref{lem:splitting} only gives the inequalities above with norms taken in the model metric. But since the model metric and the hyperbolic metric are uniformly bilipschitz, these inequalities remain true with the hyperbolic metric.

    Let $\alpha' = \alpha - d \beta_1$. By (iii) and the triangle inequality,
    $$
    \norm {\alpha'}_{L^2} \lesssim \norm{\alpha}_{L^2}.
    $$
    Observe that $\alpha'$ decomposes as a sum of disjoint lumps, each of which is supported in the interior of one of the $B_i$. Thus by \cref{lem:filling}, we can write $\alpha'=d\beta_2$ for some 1-form $\beta_2$ satisfying
    \begin{align*}
        \norm {\beta_2}_{L^2} \lesssim \norm {\alpha'}_{L^2} \lesssim \|\alpha\|_{L^2}.
    \end{align*}
    Now $\beta=\beta_1+\beta_2$ is a primitive for $\alpha$ with
    \begin{align*}
        \|\beta\|_{L^2}
        \leq \|\beta_1\|_{L^2} + \|\beta_2\|_{L^2}
        \lesssim \|\alpha\|_{L^2},
    \end{align*}
    as desired.
\end{proof}

\end{section}

\begin{section}{Spectral gap with tame limit}\label{sec.pf:11}

 In this section, we will isolate the specific properties of a sequence of 3-manifolds $M_N$ with bounded rank that will give us a spectral gap.{ We will also show there do indeed exist sequences that  satisfy these properties, which will prove Theorem~\ref{thm:11}.}
	
	Let $E^+,E^-$ be 3-manifolds with boundary $\Sigma$. Fix inclusions of collar neighborhoods
	\begin{align*}
		\iota^+ \colon \Sigma \times [0,1) \hookrightarrow E^+
		\qquad \text{and} \qquad
		\iota^- \colon \Sigma \times (-1,0] \hookrightarrow E^-
	\end{align*}
	identifying $\Sigma \times \{0\} \simeq \partial E^{\pm}$. Fix a diffeomorphism $\varphi$ of $\Sigma$, and let $\tilde{\varphi} \colon \Sigma \times \mathbb{R} \to \Sigma \times \mathbb{R}$ denote the map
	\begin{align*}
		\tilde{\varphi}(x,t) = (\varphi(x), t+1).
	\end{align*}

	Say that a family of metrics $g_i$ on a fixed manifold is \emph{commensurate} if there is a uniform constant $C \geq 1$ such that $C^{-1} g_i \leq g_j \leq Cg_i$ for all $i,j$.
	
	Let $(M_N,g_N)$ be a sequence of Riemannian 3-manifolds with diffeomorphic identifications
	\begin{align*}
		M_N \simeq E^- \sqcup_{\Sigma \times (-N-1,-N]} \Sigma \times (-N-1,N+1) \sqcup_{\Sigma \times [N,N+1)} E^+
	\end{align*}
	satisfying the properties (i),(ii) below. Here $E^+,E^-$ are glued to $\Sigma \times (-N-1,N+1)$ along the inclusions
	\begin{align*}
		&\iota_N^+ \colon \Sigma \times [N,N+1) \xrightarrow{\tilde{\varphi}^{-N}} \Sigma \times [0,1) \xrightarrow{\iota^+} E^+,
		\\&\iota_N^- \colon \Sigma \times (-N-1,-N] \xrightarrow{\tilde{\varphi}^N} \Sigma \times (-1,0] \xrightarrow{\iota^-} E^-.
	\end{align*}
	\begin{enumerate}
		\item[(i)] The metrics $g_N|_{E^+}$ are commensurate, as are $g_N|_{E^-}$.
		
		\item[(ii)] There is a commensurate family of metrics $g_{N,n}$ on $\Sigma \times (-1,1)$, ranging over all $N$ and all $|n| \leq N$, such that
		\begin{align*}
			g_N|_{\Sigma \times (n-1,n+1)}
			= \tilde{\varphi}_*^n g_{N,n}.
		\end{align*}
	\end{enumerate}
	
	These are the only geometric assumptions we need to make about the $M_N$, but we also need some cohomological assumptions on $\varphi,E^{\pm},\iota^{\pm}$. Denote $V = H_c^2(\Sigma \times \mathbb{R})$ (all cohomology groups have real coefficients). Let $V^{\pm} \subseteq V$ be the Lagrangian subspaces
	\begin{align*}
		& V^+ = \Ker(V \simeq H_c^2(\Sigma \times (0,1)) \xrightarrow{\iota^+_*} H_c^2(E^+)),
		\\& V^- = \Ker(V \simeq H_c^2(\Sigma \times (-1,0)) \xrightarrow{\iota^-_*} H_c^2(E^-)).
	\end{align*}
	Assume:
	\begin{enumerate}
		\item[(iii)] $H^2(E^{\pm}) = 0$.
		
		\item[(iv)] The vector space $V$ splits as a direct sum of eigenspaces for $\tilde{\varphi}_*$ with eigenvalues $\neq \pm 1$.
		
		\item[(v)] $V^+$ (resp. $V^-$) has zero intersection with the contracting (resp. expanding) subspace of $V$ determined by $\tilde{\varphi}_*$.
	\end{enumerate}

	\begin{thm} \label{thm:lott_type_gap}
		Under the assumptions (i)-(v), the spectrum of the Laplacian on coclosed 1-forms (or equivalently on closed 2-forms) on $M_N$ is uniformly bounded below when $N$ is sufficiently large.
	\end{thm}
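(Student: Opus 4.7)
The plan is to adapt the strategy from the proof of \cref{thm:1}, replacing its local cohomological vanishing by a global routing of classes to the two caps. By \cref{lem:dualchar}, it suffices to produce, for any exact 2-form $\alpha$ on $M_N$, a primitive $\beta$ with $\|\beta\|_{L^2} \lesssim \|\alpha\|_{L^2}$, uniformly in $N$ for $N$ sufficiently large. First I would apply \cref{lem:splitting} at each slab interface $\Sigma \times \{n\}$ and at the two cap interfaces; using the commensurability in (ii) to obtain uniform constants, this yields $\alpha = d\beta_1 + \tilde\alpha$ with $\|\beta_1\|_{L^2} \lesssim \|\alpha\|_{L^2}$, where $\tilde\alpha = \sum_{n=-N}^{N-1} \alpha_n + \alpha^{E^-} + \alpha^{E^+}$ is a sum of closed pieces, each supported in the interior of a single slab or cap. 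In each slab I would write $\alpha_n = \omega_n + d\eta_n$, where $\omega_n$ is a canonical representative of the cohomology class $v_n \in V$ chosen by a fixed rule depending only on the standard slab model and $\eta_n$ is obtained from \cref{prop:cpt} with $\|\eta_n\|_{L^2} \lesssim \|\alpha_n\|_{L^2}$. This reduces the problem to filling $\sum_n \omega_n + \alpha^{E^-} + \alpha^{E^+}$.

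The key new ingredient is a global cohomological decomposition enabled by conditions (iv) and (v). By (iv), $V = L_+ \oplus L_-$ decomposes into the expanding and contracting subspaces for $\tilde\varphi_*$, and by (v), iterating $\tilde\varphi_*$ drives $\tilde\varphi_*^N V^+$ exponentially close to $L_+$ and $\tilde\varphi_*^{-N} V^-$ exponentially close to $L_-$. Hence for $N$ large, $V = \tilde\varphi_*^N V^+ \oplus \tilde\varphi_*^{-N} V^-$ with uniformly bounded decomposition constants; in fact for each $n \in [-N,N]$ the pair $(\tilde\varphi_*^{N-n} V^+, \tilde\varphi_*^{-N-n} V^-)$ remains uniformly transverse, which is exactly what is needed to realize the decomposition at the form level in slab $n$. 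I would then decompose $v_n = v_n^R + v_n^L$ with $v_n^R \in \tilde\varphi_*^N V^+$ and $v_n^L \in \tilde\varphi_*^{-N} V^-$, and correspondingly split $\omega_n = \omega_n^R + \omega_n^L$ with $\|\omega_n^{R}\|_{L^2}, \|\omega_n^{L}\|_{L^2} \lesssim \|\omega_n\|_{L^2}$, absorbing any discrepancy into $\eta_n$.

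The core geometric step is a push-and-simplify construction in the spirit of \cref{sec:isoperimetric}. For each $n$ I would build a 1-form $\gamma_n^R$ on $M_N$ supported in slabs $n, \ldots, N$ together with $E^+$, satisfying $d\gamma_n^R = \omega_n^R$. Writing $v_n^R = \tilde\varphi_*^N u_n^+$ with $u_n^+ \in V^+$, a natural representative of $v_n^R$ in slab $m$ (for $n \leq m \leq N$) has $L^2$ norm comparable to $|\tilde\varphi_*^{N-m} u_n^+|$, which decays geometrically as $m \to N$, since $V^+ \cap L_- = 0$ forces $u_n^+$ to have nonzero $L_+$ component and $\tilde\varphi_*$ expands $L_+$. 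Interpolating between successive representatives across slab interfaces by applying \cref{prop:non-cpt} in a fattened slab and summing the resulting geometric series gives $\|\gamma_n^R\|_{L^2} \lesssim \|\omega_n^R\|_{L^2}$; the cap contribution inside $E^+$ comes from (iii) via \cref{prop:non-cpt} applied to the fixed bilipschitz model of $E^+$ afforded by (i). An analogous construction produces $\gamma_n^L$ routing $v_n^L$ into $E^-$, and the cap pieces $\alpha^{E^\pm}$ are handled in the same spirit using (iii).

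Finally I would assemble $\beta = \beta_1 + \sum_n \eta_n + \sum_n(\gamma_n^R + \gamma_n^L) + (\text{cap primitives})$. Since each $\gamma_n^R$ is concentrated near slab $n$ with exponential decay to the right, and $\gamma_n^L$ symmetrically to the left, a Young-type inequality for convolution against a geometric kernel on $\mathbb{Z}$ yields $\|\sum_n \gamma_n^R\|_{L^2}^2 \lesssim \sum_n \|\omega_n^R\|_{L^2}^2 \lesssim \|\alpha\|_{L^2}^2$, and similarly for the left-going sum. The main obstacle I anticipate is analytical rather than linear-algebraic: realizing the decomposition $v_n = v_n^R + v_n^L$ as a decomposition of forms with the expected $L^2$ bounds uniform in $n$ and $N$, and meshing the routing 1-forms near the caps with the cap pieces $\alpha^{E^\pm}$ so that the $H^2_c(E^\pm)$ obstructions cancel against each other (as dictated by the global exactness of $\alpha$) without introducing uncontrolled error.
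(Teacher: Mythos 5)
Your proposal follows essentially the same route as the paper's proof: reduce via \cref{lem:dualchar} to finding $L^2$-bounded primitives, localize $\alpha$ to the slabs and caps, split each slab class into components in $V_N^{\pm}=\tilde{\varphi}_*^{\pm N}V^{\pm}$ using the uniform transversality supplied by (iv)--(v) (\cref{lem:unif_transverse}), and push each component slab-by-slab toward its cap with geometric decay (\cref{lem:generic_growth}), capping off because the class lies in $V_N^{\pm}$. The only adjustments are technical: the interpolation between successive slab representatives should invoke the compactly supported \cref{prop:cpt} rather than \cref{prop:non-cpt} (so the local primitives extend by zero and can be summed), and the form-level realization of the splitting that you flag as the main obstacle is handled in the paper by fixing a finite-dimensional space of representative forms in a model slab, transporting it by $\tilde{\varphi}_*^n$, and comparing the class norm with the $L^2$ norm via Poincar\'e duality.
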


    For an example of a sequence of manifolds that satisfy assumptions (i)-(v), we can let $E^- = E^+$ be hyperbolic homology genus 2 handlebodies as in Section \ref{sec:construction}, and $\varphi: \Sigma_2\to \Sigma_2$ a pseudo-Anosov diffeomorphism acting on the homology $H_1(\Sigma_2)$ by the symplectic matrix
    \begin{align*}
		\begin{pmatrix}
			4 & 2 & 3 & 0 \\
			2 & 2 & 0 & 3 \\
			1 & 0 & 2 & -2 \\
			0 & 1 & -2& 4
		\end{pmatrix}
	\end{align*}
    in the basis $H_1(\Sigma_2;\mathbb{Z}) = \langle a_1,a_2,b_1,b_2\rangle$ where $a_1, a_2$ form a basis for $\text{Ker}(H_1(\Sigma_2;\mathbb{Z})\to H_1(E^+;\mathbb{Z}))$ and $b_1, b_2$ form a basis for $\text{Ker}(H_1(\Sigma_2;\mathbb{Z}) \to H_1(E^-;\mathbb{Z}))$, where these maps are obtained by viewing $\Sigma_2$ as a Heegaard surface for the manifold $M_0$ obtained by gluing $E^+$ to $E^-$ as in the standard Heegaard decomposition of $S^3$. The manifolds $M_N$ are obtained from $M_0$ by composing the gluing map for the Heegaard splitting of $M_0$ along $\Sigma_2$ with $\varphi^{2N}$. 

    Now we verify the conditions (i)-(v): 
    Recall from the discussion in Section~\ref{sec.gluing} that the model metric $g_\Phi$ on $M_N \simeq E^-\cup \Sigma \times [-N,N]\cup E^+$ is such that the restrictions of $g_{\Phi}$ to $E^-,E^+$ are $K$-bilipschitz to the respective hyperbolic metrics, and 
    $$ g_\Phi|_{\Sigma \times [-N,N]} = d(\ell t)^2 + \gamma(\ell t)
    = \ell^2 dt^2 + \gamma(\ell t)\,, $$
    where $\gamma : [-\ell N,\ell N] \to \text{Teich}(\Sigma)$ is the unit speed Teichm\"uller geodesic corresponding to $\varphi^{2N}$ (so $\ell$ is the length of the Teichm\"uller geodesic corresponding to $\varphi$). Thus we see that
    $$ g_\Phi|_{\Sigma \times (n-1,n+1)} = \varphi_*^n g_\Phi|_{\Sigma \times (-1,1)}. $$
    So, for sufficiently large values of $N$, the gluing heights are large and Theorem~\ref{bmnsmainthm} implies that the hyperbolic manifolds $(M_N,g_N)$ are uniformly $K$-bilipschitz to the model manifold, and we have verified assumptions (i) and (ii).

    Note that assumption (iii) is clearly verified as $E^\pm$ are homology handlebodies. By Poincar\'e duality and our choice of basis, the matrix under consideration represents the action of $\tilde \varphi_*$ on $V = V^+ \oplus V^-$. It has eigenvalues $3 \pm 2\sqrt{2}$, each with multiplicity $2$. Now the eigenvectors of $\tilde \varphi_*$ split into the contracting and expanding eigenspaces respectively as
    \begin{align*}
    \begin{Bmatrix}
		\begin{pmatrix}
			2 \\
			-2\sqrt{2}-1 \\
			0 \\
			1 
		\end{pmatrix} ,
        \begin{pmatrix}
			-2\sqrt{2}+1\\
			2 \\
			1  \\
			0 
		\end{pmatrix}
    \end{Bmatrix},
    \begin{Bmatrix}
		\begin{pmatrix}
			2 \\
			2\sqrt{2}-1 \\
			0 \\
			1 
		\end{pmatrix} ,
        \begin{pmatrix}
			2\sqrt{2}+1\\
			2 \\
			1  \\
			0 
		\end{pmatrix}
    \end{Bmatrix}
        \,.
	\end{align*}
    We now can see that the assumptions (iv) and {(v) are satisfied}.
    
    Thus, assuming Theorem~\ref{thm:lott_type_gap} for now, the above sequence $M_N$ gives a proof of Theorem~\ref{thm:11}. We remark that the $M_N$ are rational homology 3-spheres with torsion homology
    $$H_1(M_N;\mathbb{Z})_{\text{tors}} = (\mathbb{Z}/A_{2N}\mathbb{Z})^2,$$
    where $A_i$ is given by the recurrence $A_0=0, A_1=1, A_{i+1}=6A_i-A_{i-1}$.
    In particular, we see that the torsion homology grows exponentially in $N$. 

	For the proof of~\cref{thm:lott_type_gap}, we will need two elementary linear algebra lemmas.
	
	\begin{lem} \label{lem:unif_transverse}
		Let $T$ be a linear transformation of a finite-dimensional real vector space $W$, and assume $W$ splits as a direct sum of eigenspaces for $T$ with eigenvalues $\neq \pm 1$. Let $W^{\pm} \subseteq W$ be subspaces whose dimensions sum to $\dim W$, such that $W^+$ (resp. $W^-$) has zero intersection with the contracting (resp. expanding) subspace of $W$ determined by $T$. Then whenever $k^{\pm}$ are nonnegative integers with $k^+ + k^-$ sufficiently large,
		\begin{align} \label{eqn:unif_decomp}
			W = T^{k^+} W^+ \oplus T^{-k^-} W^-,
		\end{align}
		and moreover this decomposition is uniformly transverse in $k^{\pm}$.
	\end{lem}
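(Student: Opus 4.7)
The strategy is to put everything into coordinates adapted to the hyperbolic splitting of $T$ and reduce the problem to inverting an explicit $2 \times 2$ block matrix. Write $W = W_c \oplus W_e$ where $W_c, W_e$ are the contracting and expanding subspaces for $T$ (well defined since no eigenvalue of $T$ has modulus $1$). There is some $0 < r < 1$ and $C \geq 1$ such that
\begin{equation*}
    \|T^k|_{W_c}\| \leq C r^k \qquad \text{and} \qquad \|T^{-k}|_{W_e}\| \leq C r^k \qquad \text{for all } k \geq 0.
\end{equation*}
The assumptions $W^+ \cap W_c = 0$ and $W^- \cap W_e = 0$ combined with $\dim W^+ + \dim W^- = \dim W$ force $\dim W^+ = \dim W_e$ and $\dim W^- = \dim W_c$, so the projections $W^+ \to W_e$ and $W^- \to W_c$ along the complementary factors are linear isomorphisms. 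Inverting them realizes
\begin{equation*}
    W^+ = \{a + Pa : a \in W_e\}, \qquad W^- = \{b + Qb : b \in W_c\},
\end{equation*}
for uniquely determined linear maps $P \colon W_e \to W_c$ and $Q \colon W_c \to W_e$.

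Next, I would track how $T^{k}$ acts on these graphs. A direct computation gives
\begin{equation*}
    T^{k^+} W^+ = \{a + P_{k^+} a : a \in W_e\}, \qquad T^{-k^-} W^- = \{b + Q_{k^-} b : b \in W_c\},
\end{equation*}
where $P_{k^+} = T^{k^+} P\, T^{-k^+}$ and $Q_{k^-} = T^{-k^-} Q\, T^{k^-}$. The key observation is that, by the norm estimates above,
\begin{equation*}
    \|P_{k^+}\| \leq C^2 \|P\|\, r^{2k^+}, \qquad \|Q_{k^-}\| \leq C^2 \|Q\|\, r^{2k^-},
\end{equation*}
so both maps shrink exponentially in their respective indices. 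Geometrically, $T^{k^+} W^+$ approaches $W_e$ and $T^{-k^-} W^-$ approaches $W_c$ in the Grassmannian, but what matters quantitatively is the product of the two small quantities, not each individually.

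For the conclusion, a vector $w = w_e + w_c \in W_e \oplus W_c$ lies in $T^{k^+} W^+ \oplus T^{-k^-} W^-$ iff there exist $a \in W_e$, $b \in W_c$ with
\begin{equation*}
    \begin{pmatrix} I & Q_{k^-} \\ P_{k^+} & I \end{pmatrix} \begin{pmatrix} a \\ b \end{pmatrix} = \begin{pmatrix} w_e \\ w_c \end{pmatrix}.
\end{equation*}
By the Schur complement, this matrix is invertible iff $I - Q_{k^-} P_{k^+}$ is, and the norm of the composition $Q_{k^-} P_{k^+}$ is at most $C^4 \|P\| \|Q\|\, r^{2(k^+ + k^-)}$. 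Once $k^+ + k^-$ exceeds a threshold depending only on $T, W^{\pm}$, this norm is $\leq 1/2$, a Neumann series yields a uniform inverse, and \eqref{eqn:unif_decomp} follows with the operator norms of the two projections from $W$ onto the summands uniformly bounded. That uniform bound is exactly the uniform transversality statement. The only subtle point in the plan is recognizing that exponential decay in the \emph{sum} $k^+ + k^-$ (rather than requiring both to be large separately) is what makes the conclusion hold in the claimed regime.
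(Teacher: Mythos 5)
Your proof is correct, and it takes a genuinely different route from the paper's. The paper argues softly: after the same dimension count ($\dim W^+=\dim W^{\expnd}$, $\dim W^-=\dim W^{\cont}$), it notes that $T^kW^+\to W^{\expnd}$ and $T^{-k}W^-\to W^{\cont}$, which settles the case where both $k^+$ and $k^-$ exceed some threshold $K$; it then handles the regime where one exponent stays bounded (say $k^-\leq K$) by a separate finiteness argument, using that each of the finitely many subspaces $T^{-k^-}W^-$ is transverse to $W^{\expnd}$ while $T^{k^+}W^+$ is very close to $W^{\expnd}$. You avoid this case split entirely: writing $W^{\pm}$ as graphs of $P\colon W_e\to W_c$ and $Q\colon W_c\to W_e$, you observe that the graph slopes after twisting decay like $r^{2k^{\pm}}$, and that solvability of the decomposition is exactly invertibility of the block matrix with off-diagonal entries $Q_{k^-}$, $P_{k^+}$, hence (Schur complement) of $I-Q_{k^-}P_{k^+}$, whose norm decays in the \emph{sum} $k^++k^-$. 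This buys explicit constants (the threshold and the transversality bound in terms of $C$, $r$, $\|P\|$, $\|Q\|$) and a one-shot treatment of the mixed regime, which is precisely the delicate point in the paper's proof; the only cosmetic remark is that your conclusion is phrased as a uniform bound on the two projections onto the summands, which is equivalent (by a standard argument in a fixed finite-dimensional space) to the paper's definition of uniform transversality via a lower bound on angles between unit vectors. One could also note that, like the paper, you implicitly use that $T$ is invertible on the relevant factors (needed to make sense of $T^{-k^-}W^-$), which is automatic in the application; and that your hypothesis "no eigenvalue of modulus $1$" is slightly weaker than the paper's semisimplicity assumption, so your argument is marginally more general.
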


 By \textit{uniformly transverse}, we mean that there is some $\epsilon>0$ so that unit tangent vectors in the two subspaces make an angle of at least $\epsilon$.

	\begin{proof}
		Denote the contracting and expanding subspaces by $W^{\cont}$ and $W^{\expnd}$, respectively. Since $W^+ \cap W^{\cont} = 0$, we have $\dim W^+ \leq \dim W^{\expnd}$. Similarly, $\dim W^- \leq \dim W^{\cont}$. On the other hand, the dimensions of $W^{\pm}$ sum to $\dim W$, so these inequalities must be equalities. It follows from this and the zero intersection condition that $T^kW^+ \to W^{\expnd}$ and $T^{-k}W^- \to W^{\cont}$ as $k \to +\infty$. Thus \eqref{eqn:unif_decomp} holds, with uniform transversality, whenever both $k^+$ and $k^-$ are sufficiently large, say $k^{\pm} > K$.
		
		Now assume $k^+ + k^-$ is sufficiently large depending on $K$, so at least one of $k^{\pm}$ is large, say $k^+$ without loss of generality. Then $T^{k^+}W^+ \approx W^{\expnd}$, and the error in this approximation can be taken sufficiently small depending on $K$. It remains to check \eqref{eqn:unif_decomp} when $k^- = 1,\dots,K$. Since $W^- \cap W^{\expnd} = 0$, we have $T^{-k^-}W^- \cap W^{\expnd} = 0$ for all $k^-$. This combined with $T^{k^+}W^+ \approx W^{\expnd}$ gives \eqref{eqn:unif_decomp} for $k^- = 1,\dots,K$, with uniform transversality.
	\end{proof}

	\begin{lem} \label{lem:generic_growth}
		Let $T$ be a linear transformation of a finite-dimensional real vector space $W$, and assume $W$ splits as a direct sum of eigenspaces for $T$ with eigenvalues $\neq \pm 1$. Let $W^+ \subseteq W$ be a subspace which has zero intersection with the contracting subspace of $W$ determined by $T$. Then there is a constant $c < 1$, depending only on $T$, such that for all $w^+ \in W^+$ and $0\leq j \leq k$,
		\begin{align*}
			\|T^jw^+\| \lesssim c^{k-j} \|T^kw^+\|
		\end{align*}
		(here $\|\cdot\|$ is any fixed choice of norm on $W$). The implied constant depends only on $T$ and $W^+$ and $||\cdot||$; it is uniform in $w^+,j,k$.
	\end{lem}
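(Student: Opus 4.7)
The plan is to decompose vectors according to the $T$-invariant splitting $W = W^{\cont} \oplus W^{\expnd}$ (which I read into the hypothesis; the proof of \cref{lem:unif_transverse} already implicitly uses that these subspaces span $W$). Write $w^+ = w_c + w_e$ with $w_c \in W^{\cont}$ and $w_e \in W^{\expnd}$. The essential input from the hypothesis is the transversality estimate
\begin{align*}
    \|w_e\| \gtrsim \|w^+\|, \qquad \|w_c\| \lesssim \|w^+\|,
\end{align*}
which follows from $W^+ \cap W^{\cont} = \{0\}$ via compactness of the unit sphere in $W^+$, with implicit constants depending only on $W^+$ and the fixed decomposition of $W$.

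Next, I would fix parameters $\rho \in (\rho_T, 1)$ and $\mu \in (1, \mu_T)$, where $\rho_T$ is the largest eigenvalue modulus of $T$ on $W^{\cont}$ and $\mu_T$ the smallest on $W^{\expnd}$. The eigenspace decomposition hypothesis (semisimplicity) gives the operator norm bounds $\|T^j|_{W^{\cont}}\| \lesssim \rho^j$ and $\|T^{-j}|_{W^{\expnd}}\| \lesssim \mu^{-j}$ for $j \geq 0$. Moreover, since $W^{\cont}$ and $W^{\expnd}$ are fixed complementary subspaces, they are uniformly transverse, so $\|T^m v\| \sim \|T^m v_c\| + \|T^m v_e\|$ for any $v = v_c + v_e$ and any $m$.

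Combining these, for any $k$ I get the lower bound
\begin{align*}
    \|T^k w^+\| \gtrsim \|T^k w_e\| \gtrsim \mu^k \|w_e\| \gtrsim \mu^k \|w^+\|,
\end{align*}
and for $j \leq k$ the upper bound
\begin{align*}
    \|T^j w^+\| \lesssim \|T^j w_c\| + \|T^j w_e\| \lesssim \rho^j \|w^+\| + \mu^{-(k-j)} \|T^k w_e\|,
\end{align*}
using $\|T^j w_e\| = \|T^{-(k-j)} T^k w_e\| \lesssim \mu^{-(k-j)} \|T^k w_e\|$. Substituting $\|w^+\| \lesssim \mu^{-k}\|T^k w^+\|$ from the lower bound and $\|T^k w_e\| \lesssim \|T^k w^+\|$ from transversality, and noting that $\rho^j \mu^{-k} \leq \mu^{-(k-j)}$ (since $\rho^j \leq 1 \leq \mu^j$), both terms are bounded by $\mu^{-(k-j)} \|T^k w^+\|$. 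Taking $c = \mu^{-1} \in (0,1)$ completes the proof.

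The only real content is the transversality estimate $\|w_e\| \gtrsim \|w^+\|$, which is where the hypothesis on $W^+$ genuinely enters; the rest is routine bookkeeping with the spectral decomposition, so I do not expect any serious obstacle. One minor caveat worth flagging is that the phrase ``eigenvalues $\neq \pm 1$'' does not literally exclude other unit-modulus eigenvalues, on which the lemma would fail (a neutral component gives $\|T^j w^+\|$ bounded both above and below by constants, independent of $j$); as in \cref{lem:unif_transverse}, this is implicitly ruled out by the intended hyperbolic setting.
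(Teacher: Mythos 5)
Your proof is correct and takes essentially the same route as the paper's: both arguments rest on the transversality of $W^+$ with the contracting subspace plus the exponential growth/decay rates on the two $T$-invariant subspaces. The only difference is bookkeeping --- the paper passes to the orthogonal projection $\Pi$ onto the expanding subspace (using that $\Pi$ commutes with $T$ and that $\|\Pi w\| \sim \|w\|$ uniformly on $T^nW^+$), while you keep both components of $w^+$ and absorb the contracting piece via the lower bound $\|T^k w^+\| \gtrsim \mu^k \|w^+\|$; also, your caveat about unit-modulus eigenvalues is moot under the stated hypothesis, since a splitting of the real vector space $W$ into eigenspaces forces all eigenvalues to be real, so excluding $\pm 1$ already excludes modulus one.
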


	\begin{proof}
		All norms are equivalent, so we may as well assume $\|\cdot\|$ comes from an inner product which makes the eigenspaces of $T$ orthogonal. Since $W^+$ has zero intersection with the contracting subspace, so does $T^nW^+$ for all $n \geq 0$. Thus if $\Pi$ denotes the orthogonal projection onto the expanding subspace, then $\Pi|_{T^nW^+}$ is injective. Furthermore, the subspaces $T^nW^+$ converge to the expanding subspace, so $\Pi|_{T^nW^+}$ is ``uniformly injective": $\|\Pi w\| \sim \|w\|$ for all $w \in T^nW^+$, uniformly in $n$. Note also that $\Pi$ commutes with $T$. We can therefore write
		\begin{align*}
			\|T^jw^+\|
			\sim \|\Pi T^jw^+\|
			= \|T^j\Pi w^+\|
			\leq c^{k-j} \|T^k\Pi w^+\|
			= c^{k-j} \|\Pi T^kw^+\|
			\sim c^{k-j} \|T^kw^+\|,
		\end{align*}
		where $c = |\lambda|^{-1}$, and $\lambda$ is the eigenvalue which maximizes $|\lambda|$ among those $|\lambda| >1$.
	\end{proof}

	We can now prove Theorem \ref{thm:lott_type_gap}.

	\begin{proof}[Proof of Theorem \ref{thm:lott_type_gap}]
		Let $N$ be large, and let $\alpha$ be a closed (smooth) 2-form on $M_N$. By Lemma \ref{lem:dualchar}, it suffices to find a 1-form $\beta$ with $d\beta = \alpha$ and $\|\beta\|_{L^2} \lesssim \|\alpha\|_{L^2}$ (with constant independent of $N$). We will make a series of reductions so that in the end, we only need to manually construct $L^2$-bounded primitives $\beta$ for particularly simple 2-forms $\alpha$.
		
		
		\noindent
		\textit{Step 0.} The restriction map
		\begin{align*}
			H^2(M_N) \to H^2(\Sigma \times (-N-1,N+1))
		\end{align*}
		is the zero map. 
		
		This map can be factored as
		\begin{align*}
			H^2(M_N)
			\to H^2(E^+)
			\to H^2(\Sigma \times (N,N+1))
			\simeq H^2(\Sigma \times (-N-1,N+1)),
		\end{align*}
		and $H^2(E^+) = 0$ by (iii).
		
		\noindent
		\textit{Step 1.} Reduction to the case where
		\begin{align}\label{eqn:red1}
			\supp\alpha
			\subseteq E^+ \cup E^- \cup \bigcup_{|n| \leq N} \Sigma \times [n-0.1,n+0.1].
		\end{align}
	
		To begin with, $\alpha$ is an arbitrary closed $2$-form. For each $-N-1 \leq m \leq N$, let $\beta_m$ be a primitive of $\alpha|_{\Sigma \times (m+0.05,m+0.95)}$ with 
		\begin{align*}
			\|\beta_m\|_{L^2(\Sigma \times (m+0.05,m+0.95))} \lesssim \|\alpha\|_{L^2(\Sigma \times (m,m+1))}
		\end{align*}
		(such a $\beta_m$ exists by Step 0 and Proposition \ref{prop:non-cpt}, and the constant can be taken uniform in $m$ because of (ii)). Let $\chi_m$ be a smooth cutoff supported in $\Sigma \times (m+0.05,m+0.95)$ which is $1$ on $\Sigma \times (m+0.1,m+0.9)$. Choose the $\chi_m$'s to all be translates of each other, so that they obey uniform derivative bounds. 
  Then consider
		\begin{align*}
			\alpha' = \alpha - \sum_{-N-1 \leq m \leq N} d(\chi_m\beta_m).
		\end{align*}
		This is a closed 2-form satisfying the support condition \eqref{eqn:red1}. In addition,  $\|\alpha'\|_{L^2} \lesssim \|\alpha\|_{L^2}$, and
		\begin{align*}
			\Big\|\sum_{-N-1 \leq m \leq N} \chi_m \beta_m\Big\|_{L^2}
			\lesssim \|\alpha\|_{L^2}.
		\end{align*}
		Thus if we can find an $L^2$-bounded primitive for $\alpha'$, then we can find an $L^2$-bounded primitive for $\alpha$. This finishes Step 1.
		
		\noindent
		\textit{Step 2.} Reduction to the case where
		\begin{align} \label{eqn:red2}
			\supp\alpha
			\subseteq \bigcup_{|n| \leq N} \Sigma \times [n-0.1,n+0.1].
		\end{align}
	
		Let $\alpha$ be a closed 2-form for which \eqref{eqn:red1} holds. Let $\beta^{\pm}$ be a primitive of $\alpha|_{E^{\pm}}$ with $\|\beta^{\pm}\|_{L^2(E^{\pm})} \lesssim \|\alpha\|_{L^2}$, which exists by Proposition \ref{prop:non-cpt}. Let $\chi^{\pm}$ be a smooth cutoff which is $0$ on $E^{\mp} \cup \Sigma \times (-N,N)$, and which is $1$ outside of $E^{\mp} \cup \Sigma \times (-N-0.1,N+0.1)$. Consider
		\begin{align*}
			\alpha' = \alpha - d(\chi^+\beta^+) - d(\chi^-\beta^-).
		\end{align*}
		This is a closed 2-form satisfying \eqref{eqn:red2}, and we have the estimates $\|\alpha'\|_{L^2} \lesssim \|\alpha\|_{L^2}$ and $\|\chi^{\pm}\beta^{\pm}\|_{L^2} \lesssim \|\alpha\|_{L^2}$. Therefore finding an $L^2$-bounded primitive for $\alpha$ reduces to finding an $L^2$-bounded primitive for $\alpha'$. Step 2 is now complete.
		
		Denote
		\begin{align*}
			& V_N^+ = \Ker(V \simeq H_c^2(\Sigma \times (N,N+1)) \xrightarrow{(\iota_N^+)_*} H_c^2(E^+)),
			\\& V_N^- = \Ker(V \simeq H_c^2(\Sigma \times (-N-1,-N)) \xrightarrow{(\iota_N^-)_*} H_c^2(E^-)).
		\end{align*}
		Observe that
		\begin{align} \label{eqn:V_N_twist_of_V}
			V_N^{\pm}
			= \tilde{\varphi}_*^{\pm N} V^{\pm}.
		\end{align}
	
		\noindent
		\textit{Step 3.} Reduction to the case where
		\begin{align*}
			\alpha = d\beta + \sum_{|n| \leq N} (\alpha_n^+ + \alpha_n^-)
		\end{align*}
		with $\supp \alpha_n^{\pm} \subseteq \Sigma \times (n-0.1,n+0.1)$ and $[\alpha_n^{\pm}] \in V_N^{\pm}$, and also
		\begin{align} \label{eqn:Step_3_bds}
			\|\beta\|_{L^2}
			\lesssim \|\alpha\|_{L^2}
			\qquad \text{and} \qquad
			\|\alpha_n^{\pm}\|_{L^2}
			\lesssim \|\alpha\|_{L^2(\Sigma \times (n-0.1,n+0.1))}.
		\end{align}
		
		Let $\alpha$ be a closed 2-form for which \eqref{eqn:red2} holds. Write
		\begin{align*}
			\alpha = \sum_{|n| \leq N} \alpha_n
			\qquad \text{with} \qquad
			\supp\alpha_n \subseteq \Sigma \times [n-0.1,n+0.1].
		\end{align*}
		Fix a choice of a $2g$-dimensional subspace
		\begin{align*}
			W \subseteq \Omega_c^2(\Sigma \times (-0.1,0.1))^{\text{closed}}
		\end{align*}
		such that the natural map $W \to V$ is an isomorphism. Let $\omega \colon V \to W$ be the inverse map. So given $v \in V$ a cohomology class, $\omega(v)$ is a differential form representing $v$. Choose a norm $\|\cdot\|_V$ on $V$. By (ii) and finite dimensionality,
		\begin{align} \label{eqn:bd_by_omega}
			\|v\|_V \sim \|\omega(v)\|_{L^2}.
		\end{align}
		Also, by (ii) and Poincar\'e duality, for any closed 2-form $\eta$ supported in $\Sigma \times (-r,r)$ with $r = O(1)$, 
		\begin{align} \label{eqn:bd_class_by_form}
			\|[\eta]\|_V \lesssim \|\eta\|_{L^2}.
		\end{align}
		Since $N$ is large, it follows from \eqref{eqn:V_N_twist_of_V}, (iv), (v), and Lemma \ref{lem:unif_transverse} that for $|n| \leq N$,
		\begin{align*}
			V = \tilde{\varphi}_*^{N-n} V^+ \oplus \tilde{\varphi}_*^{-N-n} V^-
			= \tilde{\varphi}_*^{-n} V_N^+ \oplus \tilde{\varphi}_*^{-n} V_N^-,
		\end{align*}
		and furthermore, this decomposition is uniformly transverse for all $N,n$. Given $v \in V$, write $v = v^+ \oplus v^-$ for the decomposition of $v$ according to $V = V_N^+ \oplus V_N^-$. Then uniform transversality means that
		\begin{align} \label{eqn:unif_trans_N,n}
			\|\tilde{\varphi}_*^{-n} v\|_V
			\sim \|\tilde{\varphi}_*^{-n} v^+\|_V + \|\tilde{\varphi}_*^{-n} v^-\|_V
		\end{align}
		(uniformly in $N,n$).
		Denote $a_n = [\alpha_n] \in V$. Let
		\begin{align*}
			\alpha_n^{\pm}
			= \tilde{\varphi}_*^n \omega(\tilde{\varphi}_*^{-n} a_n^{\pm}).
		\end{align*}
		This is supported in $\Sigma \times (n-0.1,n+0.1)$.
		By (ii), \eqref{eqn:bd_by_omega}, \eqref{eqn:bd_class_by_form}, and \eqref{eqn:unif_trans_N,n},
		\begin{align*}
			\|\alpha_n^{\pm}\|_{L^2}
			\sim \|\tilde{\varphi}_*^{-n} a_n^{\pm}\|_V
			\lesssim \|\tilde{\varphi}_*^{-n} a_n\|_V
			\lesssim \|\tilde{\varphi}_*^{-n} \alpha_n\|_{L^2}
			\sim \|\alpha_n\|_{L^2}.
		\end{align*}
		By construction, $[\alpha_n] = [\alpha_n^+] + [\alpha_n^-]$, so by Proposition \ref{prop:cpt}, there is a primitive $\beta_n$ for $\alpha_n - \alpha_n^+ - \alpha_n^-$ supported in $\Sigma \times (n-0.2,n+0.2)$ with $\|\beta_n\|_{L^2} \lesssim \|\alpha_n\|_{L^2}$ (the implied constant can be taken uniform by (ii)). So
		\begin{align*}
			\alpha_n = \alpha_n^+ + \alpha_n^- + d\beta_n.
		\end{align*}
		Let
		\begin{align*}
			\beta = \sum_{|n| \leq N} \beta_n.
		\end{align*}
		Since the $\beta_n$ have disjoint support, $\|\beta\|_{L^2} \lesssim \|\alpha\|_{L^2}$. We conclude that
		\begin{align*}
			\alpha = d\beta + \sum_{|n| \leq N} (\alpha_n^+ + \alpha_n^-)
		\end{align*}
		is of the desired form, completing Step 3.
		
		From now on, let $\alpha$ be as in the statement of Step 3. Write
		\begin{align*}
			\alpha^{\pm}
			= \sum_{|n| \leq N} \alpha_n^{\pm}.
		\end{align*}
		By \eqref{eqn:Step_3_bds} and the fact that the $\alpha_n^{\pm}$ have disjoint support for different $n$, we have $\|\alpha^{\pm}\|_{L^2} \lesssim \|\alpha\|_{L^2}$. Thus it suffices to find an $L^2$-bounded primitive for $\alpha^{\pm}$. By symmetry, it's enough to show that we can always find an $L^2$-bounded primitive for $\alpha^+$. This follows from
		
		\noindent
		\textit{Step 4.} Each $\alpha_n^+$ has a primitive $\beta_n^+$ such that
		\begin{align*}
			\supp \beta_n^+ \subseteq \Sigma \times (n-1,N+1) \cup E^+
		\end{align*}
		and
		\begin{align}
			& \|\beta_n^+\|_{L^2(\Sigma \times (m-1,m+1))}
			\lesssim c^{m-n} \|\alpha_n^+\|_{L^2}
			\quad \text{(for all } n \leq m \leq N \text{)}, \label{eqn:Step_4_bd_1}
			\\& \|\beta_n^+\|_{L^2(E^+)}
			\lesssim c^{N-n} \|\alpha_n^+\|_{L^2},
                \label{eqn:Step_4_bd_2}
		\end{align}
		where $c<1$ is a constant independent of $N,n,m$. These estimates say that $\beta_n^+$ is essentially supported on $\Sigma \times (n-1,n+1)$, with an exponentially decaying tail on the right.
		
		Denote $a_n^+ = [\alpha_n^+] \in V_N^+.$ Let $\alpha_{n,n}^+ = \alpha_n^+$, and for $n < m \leq N$, let
		\begin{align*}
			\alpha_{n,m}^+
			= \tilde{\varphi}_*^m \omega(\tilde{\varphi}_*^{-m} a_n^+).
		\end{align*}
		This is supported in $\Sigma \times (m-0.1,m+0.1)$.
		Write
		\begin{align*}
			\alpha_n^+
			= \alpha_{n,N}^+ + \sum_{m=n}^{N-1} (\alpha_{n,m}^+ - \alpha_{n,m+1}^+).
		\end{align*}
		Since $[\alpha_{n,m}^+] = [\alpha_{n,m+1}^+]$ in $V$, Proposition \ref{prop:cpt} produces a $1$-form $\beta_{n,m}^+$ supported in $\Sigma \times (m-1,m+2)$ such that
		\begin{align*}
			d\beta_{n,m}^+
			= \alpha_{n,m}^+ - \alpha_{n,m+1}^+
			\qquad \text{and} \qquad
			\|\beta_{n,m}^+\|_{L^2}
			\lesssim \|\alpha_{n,m}^+\|_{L^2} + \|\alpha_{n,m+1}^+\|_{L^2}
		\end{align*}
		(as usual, the constant here can be taken to be uniform by (ii)). Also, $[\alpha_{n,N}^+] \in V_N^+$, so
		\begin{align*}
			\alpha_{n,N}^+
			\text{ vanishes in }
			H_c^2(\Sigma \times (N-1,N+1) \cup E^+).
		\end{align*}
		Thus by Proposition \ref{prop:cpt}, there is a 1-form $\beta_{n,N}^+$ supported in $\Sigma \times (N-1,N+1) \cup E^+$ such that
		\begin{align*}
			d\beta_{n,N}^+ = \alpha_{n,N}^+
			\qquad \text{and} \qquad
			\|\beta_{n,N}^+\|_{L^2}
			\lesssim \|\alpha_{n,N}^+\|_{L^2}
		\end{align*}
		(again, the constant can be taken uniform by (i),(ii)). Let
		\begin{align*}
			\beta_n^+
			= \sum_{m=n}^N \beta_{n,m}^+.
		\end{align*}
		Then $d\beta_n^+ = \alpha_n^+$, and \eqref{eqn:Step_4_bd_1} and \eqref{eqn:Step_4_bd_2} will hold if we can show that
		\begin{align*}
			\|\alpha_{n,m}^+\|_{L^2}
			\lesssim c^{m-n} \|\alpha_n^+\|_{L^2}
		\end{align*}
		for some constant $c < 1$. Estimate
		\begin{align*}
			\|\alpha_{n,m}^+\|_{L^2}
			&\sim \|\tilde{\varphi}_*^{-m} a_n^+\|_V
			\\&= \|\tilde{\varphi}_*^{-(m-n)} \tilde{\varphi}_*^{-n}a_n^+\|_V
			\\&\leq \|\tilde{\varphi}_*^{-(m-n)}\|_{\tilde{\varphi}_*^{-n} V_N^+ \to V} \|\tilde{\varphi}_*^{-n} a_n^+\|_V
			\\&\lesssim \|\tilde{\varphi}_*^{-(m-n)}\|_{\tilde{\varphi}_*^{N-n} V^+ \to V} \|\tilde{\varphi}_*^{-n}\alpha_n^+\|_{L^2}
			\\&\sim \|\tilde{\varphi}_*^{-(m-n)}\|_{\tilde{\varphi}_*^{N-n} V^+ \to V} \|\alpha_n^+\|_{L^2}.
		\end{align*}
		In the third line we use that $a_n^+ \in V_N^+$.
		It remains to prove the operator norm bound
		\begin{align*}
			\|\tilde{\varphi}_*^{-(m-n)}\|_{\tilde{\varphi}_*^{N-n} V^+ \to V} \lesssim c^{m-n}.
		\end{align*}
		Equivalently, we want to show that for all $v^+ \in V^+$,
		\begin{align*}
			\|\tilde{\varphi}_*^{N-m} v^+\|_V
			\lesssim c^{m-n} \|\tilde{\varphi}_*^{N-n}v^+\|_V.
		\end{align*}
		This is a direct consequence of (iv), (v), and Lemma \ref{lem:generic_growth}. Thus Step 4 is finished, and the theorem is proved.
	\end{proof}
    
\end{section} 
\section{Proof of Theorem \ref{thm:2}}\label{sec.pf:2}

\subsection{Geometric part}
\begin{prop}\label{prop:geometricpart}
Suppose a sequence of pointed closed hyperbolic 3-manifolds $\{(M_i,x_i)\}$ geometrically converges in the pointed Gromov--Hausdorff topology to a tame manifold $(M_\infty,x_\infty)$ with at least one end. Suppose further that there is a uniform lower bound $\lambda_0$ for the bottom eigenvalue of the Laplacian on coclosed 1-forms on $M_i$. Then there is a sequence of manifolds $M_N$ satisfying the hypotheses (i) and (ii) of \cref{thm:torsion_analytic} below, with bilipschitz constants uniform in $N$.
\end{prop}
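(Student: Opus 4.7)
The plan is to use geometric convergence together with the two-sided injectivity radius bound coming from the spectral gap (\cref{sec.injrad}) to realize each $M_i$, for $i$ sufficiently large, as two compact caps $E^\pm$ glued to a long product region $\Sigma \times [-N-1, N+1]$ of uniformly bounded geometry. The first cap comes from a Scott core of $M_\infty$, and the second from a Scott core of a \emph{second} geometric limit, extracted by moving the basepoint to the far side of the manifold.

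First I would fix the ``left'' cap. By tameness of $M_\infty$ and the assumption that it has at least one end, choose a compact Scott core $K \subset M_\infty$ containing $x_\infty$ so that $M_\infty \setminus K$ is a disjoint union of product ends $\Sigma_j \times [0,\infty)$, at least one of which is nonempty. Let $\Sigma$ be its boundary surface. By the definition of geometric convergence, for each large $i$ there is a $(1+\eps_i)$-bilipschitz embedding $F_i : B_{T_i}(M_\infty, x_\infty) \to M_i$ with $T_i \to \infty$ and $\eps_i \to 0$. Choose $N_i \to \infty$ slowly enough that $F_i$ is defined on $K \cup (\Sigma \times [0, 2N_i+2])$ and still has bilipschitz constant $(1+\eps_i)$ there. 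Declare $E^+ \subset M_i$ to be (a collar neighborhood of) the image of $K$.

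Next I would produce the ``right'' cap. Choose basepoints $y_i \in M_i$ lying on the image of $\Sigma \times \{2N_i+1\}$. The uniform lower injectivity radius bound combined with \cref{geometriccompactness} gives, along a subsequence, a pointed geometric limit $(M_\infty^-, y_\infty)$. This limit has at least one end coming from ``looking back'' through the product region, and inherits tameness from the bounded-rank/thick-part structure of the $M_i$'s (via \cite[Theorem 14.4]{bs23}). Its Scott core $K^-$ transports by the near-isometries back into $M_i$, giving the second cap $E^-$. A diagonal argument over $i$ (and the marking data on $\Sigma$ at the interface between the caps and the product region) lets us relabel the resulting manifolds as $M_N$ with $N = N_i \to \infty$ and decomposition $M_N \simeq E^- \sqcup_{\Sigma \times (-N-1,-N]} \Sigma \times (-N-1,N+1) \sqcup_{\Sigma \times [N,N+1)} E^+$.

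Hypothesis (i) of \cref{thm:torsion_analytic} is then immediate: the metric on $E^\pm$ inside $M_N$ is $(1+\eps_N)$-bilipschitz to the fixed metric on the Scott core of $M_\infty$ (resp. $M_\infty^-$), so the family is commensurate. For hypothesis (ii), each slab $\Sigma \times (n-1, n+1) \subset M_N$ is $(1+\eps_N)$-bilipschitz to the corresponding slab of an end of $M_\infty^\pm$. These end slabs all have two-sided injectivity radius bounds and hence lie in a Cheeger--Gromov precompact class of marked hyperbolic metrics on $\Sigma \times (-1,1)$; choosing $\tilde\phi$ to record the shift in marking across unit depth in the limit end and extracting a further diagonal subsequence yields the commensurate family $g_{N,n}$. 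The main obstacle is the last point: rigorously identifying a \emph{single} $\tilde\phi$ (independent of $N$ and $n$) so that the slab metrics $\tilde\phi_*^{-n} g_N|_{\Sigma \times (n-1,n+1)}$ really do lie in one commensurability class across all indices. This requires tracking how markings on $\Sigma$ propagate through the hyperbolic product regions of the limit ends, using compactness of the thick part of moduli space and a careful choice of collar identifications to make the shift uniform.
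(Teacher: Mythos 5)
Your proposal verifies the wrong set of hypotheses. Hypotheses (i) and (ii) of \cref{thm:torsion_analytic} concern only the product region: each slab $\Sigma \times (n-1,n+1)$ must be uniformly commensurate to a metric product $g_n + dt^2$, and the surfaces $(\Sigma, g_n)$ must be uniformly bilipschitz to one another. There are no caps $E^{\pm}$ in these hypotheses, and there is no requirement of a single diffeomorphism $\tilde{\varphi}$ with $g_N|_{\Sigma \times (n-1,n+1)} = \tilde{\varphi}_*^n g_{N,n}$ --- that is conditions (i)--(ii) at the start of \cref{sec.pf:11}, which feed the different Theorem~\ref{thm:lott_type_gap}. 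So the step you flag as ``the main obstacle'' (producing one $\tilde{\phi}$ uniform in $N,n$) is not needed at all, while what you check as ``hypothesis (i)'' (commensurability of the cap metrics) is not hypothesis (i) of \cref{thm:torsion_analytic}. The entire second geometric limit, second Scott core, and cap $E^-$ apparatus is superfluous; moreover your tameness claim for that second limit invokes a bounded-rank hypothesis (via \cite[Theorem 14.4]{bs23}) that the proposition does not assume.

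The genuine gap is that you never establish the one substantive point: that the ends of $M_\infty$ admit a product parameterization in which the metric is \emph{uniformly bilipschitz to a product} $g_t + dt^2$ with $g_t$ confined to the thick part of Teichm\"uller space. The topological product structure $\Sigma \times [0,\infty)$ supplied by tameness and a Scott core carries no metric control, so with that parameterization your slabs $\Sigma \times (n-1,n+1)$ need not have bounded diameter or bounded geometry, let alone be uniformly commensurate to products; an appeal to Cheeger--Gromov precompactness presupposes exactly the bounds in question and, even granted, does not yield the product form or the uniform bilipschitz equivalence of the cross-sections (for a geometrically finite end the slabs \emph{are} near-products but the cross-sectional metrics grow exponentially, so condition (ii) fails --- one must first use the uniform upper injectivity-radius bound coming from the spectral gap, as in \cref{sec.injrad}, to see the ends are degenerate). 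The paper's proof is exactly this: the two-sided injectivity radius bounds pass to $M_\infty$, force the ends to be degenerate, and then Minsky's bounded-geometry theorem \cite{min94} gives a $K(\eps,g)$-bilipschitz model $g_t + dt^2$ along a unit-speed Teichm\"uller geodesic in the thick part, where any two such fiber metrics are $K'$-bilipschitz; the structure is then transported into the approximating $M_i$ by the almost-isometric geometric-convergence maps (this transfer you do carry out correctly). Without \cite{min94} or an equivalent bounded-geometry model theorem, your argument does not establish (i) or (ii).
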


\begin{proof}

There exists some $\epsilon=\epsilon(\lambda_0)<1$ such that the injectivity radius at each point of $M_i$ is between $\epsilon$ and $\frac{1}{\epsilon}$.  The limit manifold  $M_{\infty}$ thus inherits the same two-sided bounds. 
The upper uniform upper bound on the injectivity radius combined with tameness implies that the ends of $M_\infty$ are degenerate. Since the injectivity radius at each point is at least some uniform $\epsilon$, it follows from \cite{min94} that each end, parameterized as $\Sigma \times [0,\infty)$, is $K$-bilipschitz to a metric $g_t + dt^2$ where $g_t$ is a family of metrics on $\Sigma$ parameterized by a unit speed geodesic in Teichm\"uller space, for some $K$ depending only on $\eps$ and the genus of $\Sigma$. 

By the lower bound on the injectivity radius at any point, $g_t$ stays in the $\eps/{(10K)}$-thick part of Teichm\"uller space. Any two such metrics are $K'$-bilipschitz for some $K'$ depending only on $\eps$ and the genus of $\Sigma$. Therefore, $g_n$ satisfies conditions (i) and (ii) from the beginning of Section \ref{sec.pf:11} with bilipschitz constants depending only on $K$ and $K'$.
\end{proof}
\subsection{Analytic part}\label{sec.analyticpart}


Fix a large parameter $N \in \mathbb{Z}^+$ (all estimates will be uniform in $N$). Let $(M_N,g_N)$ be a smooth closed Riemannian 3-manifold, and assume given a diffeomorphic identification of an open subset of $M_N$ with $\Sigma \times (-N,N)$, where $\Sigma$ is a fixed surface of genus $g$. In addition, assume the following:
	\begin{enumerate} \parskip = 0.5em
		\item[(i)] For each $|n| < N$, there is a metric $g_n$ on $\Sigma$ such that $g_N|_{\Sigma \times (n-1,n+1)}$ is commensurate to the product metric $g_n + dt^2$ on $\Sigma \times (n-1,n+1)$, uniformly for all $|n| < N$.
		
		\item[(ii)] The surfaces $(\Sigma,g_n)$, with $g_n$ as in (i), are uniformly bilipschitz.
	\end{enumerate}

        Here when we say ``uniformly," we mean that the commensurability and bilipschitz constants should be thought of as being independent of $n,N$. We make this intuition rigorous by stipulating that in the remainder of this section, all implicit constants may depend on those in (i) and (ii), as well as on the genus $g$ and on the metric $g_0$ on $\Sigma$ in (i), but never on anything else --- in particular never on $n,N$. 

	The condition (ii) means that there are diffeomorphisms $\phi_n \colon \Sigma \to \Sigma$ such that the metrics $\phi_n^* g_n$ are uniformly commensurate.

	\begin{thm} \label{thm:torsion_analytic}
        Let $(M_N,g_N)$ as above satisfy (i) and (ii).
		Let $0 < \lambda_0 \lesssim 1$. Suppose the bottom eigenvalue of the Laplacian on coclosed 1-forms on $M_N$ is at least $\lambda_0$. Then there exists $\delta \gtrsim \lambda_0$ 
        such that
		\begin{align*}
			\#H_1(M_N,\mathbb{Z})_{\tors}
			\gtrsim \lambda_0^{2g}(1+\delta)^N.
		\end{align*}
	\end{thm}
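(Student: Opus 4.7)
The coclosed 1-form spectral gap assumption forces $H^1(M_N,\R) = 0$, so each $M_N$ is a rational homology sphere and $\#H_1(M_N,\Z)_{\tors} = \#H_1(M_N,\Z)$. Let $L_N \subset H_1(\Sigma,\Z) \cong \Z^{2g}$ be the kernel of the inclusion-induced map from the cross-section $\Sigma \times \{0\}$; since $M_N$ is a rational homology sphere, $L_N$ is a full-rank sublattice, so $\#H_1(M_N,\Z) \geq [\Z^{2g} : L_N]$, and it suffices to lower-bound this index. A Mayer--Vietoris argument applied to the decomposition $M_N = M_N^{\leq 0} \cup M_N^{\geq 0}$ (cutting at $\Sigma \times \{0\}$) yields $L_N = \Lambda^+ + \Lambda^-$, where $\Lambda^{\pm} \subset \Z^{2g}$ is the rank-$g$ Lagrangian sublattice of cycles in $H_1(\Sigma,\Z)$ that become null-homologous in $M_N^{\geq 0}$ (resp.\ $M_N^{\leq 0}$); the Lagrangian and rank $g$ properties come from the half-lives-half-dies principle applied to the $3$-manifolds $M_N^{\geq 0}$, $M_N^{\leq 0}$ with incompressible boundary $\Sigma$.

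The analytic heart of the proof is to convert the spectral gap into a quantitative exponential contraction. Endow $H_1(\Sigma,\R)$ with the family of Hodge norms $|\cdot|_n$ induced by $g_n$. The plan is to show there exists $\delta \gtrsim \lambda_0$ such that
\[
|\gamma|_N \lesssim \lambda_0^{-1/2}(1+\delta)^{-N} |\gamma|_0 \quad \text{for every } \gamma \in \Lambda^+,
\]
and a symmetric statement for $\Lambda^-$ with the two ends of the tube exchanged. The mechanism is as follows: by \cref{lem:dualchar} the spectral gap yields efficient $L^2$-primitives of exact $2$-forms on $M_N$, which combined with \cref{prop:non-cpt,prop:cpt} and cutoff arguments in the spirit of \cref{lem:splitting} produce efficient $2$-chain fillings of null-homologous cycles. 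Applied to $\gamma \in \Lambda^+$, this gives a filling $C \subset M_N^{\geq 0}$ of area $\lesssim \lambda_0^{-1/2}|\gamma|_0$. The one-step contraction from level $n$ to $n+1$ then follows from a local version of this filling inequality on the slab $\Sigma \times [n,n+1]$ (where assumption (ii) provides uniform control on the geometry), and iterating across the $N$ slabs yields the exponential rate $\delta \gtrsim \lambda_0$, with the prefactor $\lambda_0^{-1/2}$ coming from the global filling constant.

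The conclusion is then linear-algebraic. Integrality of $\Lambda^{\pm}$ combined with assumption (ii) gives $|\gamma|_0 \gtrsim 1$ for every nonzero $\gamma \in \Lambda^+$, so the contraction estimate forces $\Lambda^+$ to lie in an exponentially thin cone around a ``contracting'' subspace of $H_1(\Sigma,\R)$ at level $N$; symmetrically $\Lambda^-$ concentrates near an ``expanding'' subspace at level $-N$. A covolume computation modeled on the Linear Algebra Lemma of \cref{sec.heu}, exploiting uniform transversality of these two cones in the spirit of \cref{lem:unif_transverse}, yields
\[
[\Z^{2g} : \Lambda^+ + \Lambda^-] \gtrsim \lambda_0^{2g}(1+\delta)^N,
\]
as desired; the factor $\lambda_0^{2g}$ comes from the $2g$ basis vectors each contributing one power of the transversality constant $\sim \lambda_0^{1/2}$.

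The main obstacle is the per-slab exponential contraction in the second step. The spectral gap is a global $L^2$-filling bound on $M_N$, but what we need is genuine exponential decay of the Hodge norms along the tube, whereas a naive coarea argument applied to the filling chain gives only polynomial ($1/N$) decay on average. Bridging this gap requires a bootstrap from an averaged decay statement to a pointwise one, by iteratively solving a one-step filling problem on each slab $\Sigma \times [n,n+1]$ while carefully tracking how the cohomology class $[\gamma] \in H_1(\Sigma,\R)$ is transported under the evolution $g_n \to g_{n+1}$, and identifying the dynamically defined contracting subspace that $\Lambda^+$ must eventually align with.
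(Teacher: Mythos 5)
Your outline reproduces the paper's skeleton — the injection $\Lambda/(\Lambda_N^++\Lambda_N^-)\hookrightarrow H_1(M_N,\Z)_{\tors}$, an integrality lower bound on slab norms, uniform transversality of $V_N^{\pm}$ with a $\lambda_0^{-1/2}$ loss, and a final lattice-point count — but the analytic heart is missing, and it is exactly the step you flag at the end. You assert that the one-step contraction from level $n$ to $n+1$ "follows from a local version of this filling inequality on the slab," but a filling inequality on a single slab cannot produce a contraction factor $1-\delta$ with $\delta\gtrsim\lambda_0$: under (i)--(ii) the norms at adjacent levels are merely uniformly comparable, and the spectral gap is a global hypothesis on $M_N$ that does not localize to slabs. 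You then concede that naive arguments only give $1/N$-type averaged decay and appeal to an unspecified "bootstrap" together with "identifying the dynamically defined contracting subspace that $\Lambda^+$ must eventually align with." In the generality of hypotheses (i)--(ii) there is no dynamics to exploit: the slab metrics $g_n$ are only uniformly bilipschitz to one another, not iterates of a fixed mapping class, so no invariant contracting subspace exists to be identified (that device belongs to the explicit constructions of \cref{thm:11}, not to \cref{thm:torsion_analytic}). So the key estimate — \cref{lem:V+-_exp_decay} in the paper — is not proved by your proposal.

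The paper closes this gap as follows. Given $v_N^+\in V_N^+$ represented by a closed $2$-form $\alpha$ compactly supported in the slab at level $m$ with $\|\alpha\|_{L^2}\lesssim\|v_N^+\|_{\Sigma\times(m-1,m+1)}$, the spectral gap yields one global primitive $\beta$ with $\|\beta\|_{L^2(M_N)}\leq\lambda_0^{-1/2}\|\alpha\|_{L^2}$. Because $v_N^+$ dies on the right, cutting off with $\chi_n^+$ shows that for every $n>m$ the form $d\chi_n^+\wedge\beta$ is again a compactly supported representative of $v_N^+$ in the slab at level $n$, so $a_n:=\|v_N^+\|^2_{\Sigma\times(n-1,n+1)}$ satisfies $\sum_{n=m+1}^{N-1}a_n\lesssim\lambda_0^{-1}a_m$ \emph{for every} $m$; an elementary lemma (if $\sum_{n>m}a_n\leq Ca_m$ for all $m$, then $a_N\leq C(1+C^{-1})^{-(N-1)}a_0$) then converts this family of summed inequalities into genuine exponential decay with rate $\delta\sim\lambda_0$ — precisely the averaged-to-pointwise upgrade you ask for but do not supply. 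One further bookkeeping point: in the paper the factor $\lambda_0^{2g}$ arises as the $2g$-th power of the minimal-vector bound $\gtrsim\lambda_0(1+\delta)^N$ (one $\lambda_0^{1/2}$ from transversality, one from the decay prefactor), via counting lattice points of $\Lambda$ in a ball injecting into $\Lambda/(\Lambda_N^++\Lambda_N^-)$; your accounting of "$\lambda_0^{1/2}$ per basis vector" would only give $\lambda_0^{g}$.
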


    Combined with \cref{prop:geometricpart}, \cref{thm:torsion_analytic} will prove \cref{thm:2}.

	Since $0$ is never an eigenvalue, there are no nonzero harmonic $1$-forms on $M_N$, so $M_N$ is a rational homology sphere. Thus for each $|t| < N$, the surface $\Sigma \times \{t\}$ separates $M_N$ into two components, which we call $M_N^{>t}$ and $M_N^{<t}$.
	
	Denote $\Lambda = H_1(\Sigma,\mathbb{Z})$, and let
	\begin{align*}
		\Lambda_N^+
		= \Ker(\Lambda \to H_1(M_N^{>-1}, \mathbb{Z}))
		\qquad \text{and} \qquad
		\Lambda_N^-
		= \Ker(\Lambda \to H_1(M_N^{<1}, \mathbb{Z})),
	\end{align*}
	where the maps are induced by $\Sigma = \Sigma \times \{0\} \hookrightarrow M_N$. Then
	\begin{align*}
		\Ker(\Lambda \to H_1(M_N,\mathbb{Z}))
		= \Lambda_N^+ + \Lambda_N^-,
	\end{align*}
	because by subdivision, any homology class which dies in $M_N$ can be written as a sum of classes which die on the right and on the left, respectively. This means that
	\begin{align*}
		\frac{\Lambda}{\Lambda_N^+ + \Lambda_N^-} \hookrightarrow H_1(M_N,\mathbb{Z}) = H_1(M_N,\mathbb{Z})_{\tors},
	\end{align*}
	and hence
	\begin{align} \label{eqn:torsion_lower_bd}
		\#H_1(M_N,\mathbb{Z})_{\tors}
		\geq \#\Big(\frac{\Lambda}{\Lambda_N^+ + \Lambda_N^-}\Big).
	\end{align}
	Our strategy to prove the theorem is to show that any nonzero element of $\Lambda_N^+ + \Lambda_N^-$ is large.

	
	View $\Lambda \subseteq V$, where $V = \Lambda \otimes \mathbb{R} = H_1(\Sigma,\mathbb{R})$. Given $I \subseteq (-N,N)$ an open interval, $V \simeq H_c^2(\Sigma \times I, \mathbb{R})$. It will often be helpful to think of $V$ in this way, so that elements of $V$ are de Rham cohomology classes and can be represented by differential forms. Let
	\begin{align*}
		V_N^+
		= \Ker(V \to H_1(M_N^{>-1}, \mathbb{R}))
		\qquad \text{and} \qquad
		V_N^-
		= \Ker(V \to H_1(M_N^{<1}, \mathbb{R})),
	\end{align*}
	so $\Lambda_N^{\pm} \subseteq \Lambda \cap V_N^{\pm}$. Again, it will be helpful to think of $V_N^{\pm}$ as
	\begin{align*}
		V_N^+ = \Ker(V \to H_c^2(M_N^{>t}, \mathbb{R}))
		\qquad \text{and} \qquad
		V_N^- = \Ker(V \to H_c^2(M_N^{<t}, \mathbb{R}));
	\end{align*}
	this holds for any $|t| < N$. As above, by subdivision,
	\begin{align*}
		\Ker(V \to H_1(M_N,\mathbb{R})) = V_N^+ + V_N^-,
	\end{align*}
	but $M_N$ is a rational homology sphere, so $V_N^+ + V_N^- = V$. In fact, $V = V_N^+ \oplus V_N^-$, because $\dim V_N^{\pm} = \frac{1}{2} \dim V$ by half-lives-half-dies. Given $v \in V$, let
	\begin{align*}
		v = v_N^+ + v_N^-
	\end{align*}
	denote the corresponding decomposition.

	For each $|n| < N$, define a norm $\|\cdot\|_{\Sigma \times (n-1,n+1)}$ on $V$ by
	\begin{align*}
		\|v\|_{\Sigma \times (n-1,n+1)}
		= \inf_{\alpha} \|\alpha\|_{L^2(\Sigma \times (n-1,n+1))},
	\end{align*}
	where the infimum runs over all smooth closed $2$-forms $\alpha$ with compact support in $\Sigma \times (n-1,n+1)$ which represent the class $v \in H_c^2(\Sigma \times (n-1,n+1), \mathbb{R})$, and the $L^2$ norm is taken with respect to the product metric $g_n + dt^2$ from (i). If $v \neq 0$, then $\|v\|_{\Sigma \times (n-1,n+1)} \neq 0$ by Poincar\'e duality. By (i), this norm satisfies the property that if $\alpha$ is a smooth closed 2-form that is admissible in the above the infimum 
    , then
	\begin{align*}
		\|\alpha\|_{L^2(M_N)}
		\lesssim \|v\|_{\Sigma \times (n-1,n+1)}.
	\end{align*}
	Conversely, given $v \in V$, there exists such an $\alpha$ with
	\begin{align*}
		\|\alpha\|_{L^2(M_N)}
		\lesssim \|v\|_{\Sigma \times (n-1,n+1)}.
	\end{align*}
	
	Fix smooth functions $\chi^{\pm}$ on $\mathbb{R}$ such that $\chi^+ + \chi^- = 1$, and $\chi^+,\chi^-$ are supported in $(-1,\infty)$ and $(-\infty,1)$, respectively. For $|n| < N$, let $\chi_n^{\pm}$ be the translate
	\begin{align*}
		\chi_n^{\pm}(t) = \chi^{\pm}(t-n).
	\end{align*}
	View $\chi_n^{\pm}$ as a function on $\Sigma \times (-N,N)$ by pulling back, and then extend $\chi_n^{\pm}$ to a function on $M_N$ in the obvious way (so that $\chi_n^{\pm}$ is locally constant outside of $\Sigma \times (n-1,n+1)$). It follows from (i) and (ii) that
	\begin{align*}
		\|d\chi_n^{\pm}\|_{L^{\infty}(M_N)} \lesssim 1.
	\end{align*}

	\begin{lem}[Uniform transversality of $V_N^{\pm}$] \label{lem:transverse}
		For all $v \in V$ and $|n| < N$,
		\begin{align*}
			\|v_N^{\pm}\|_{\Sigma \times (n-1,n+1)}
			\lesssim \lambda_0^{-1/2} \|v\|_{\Sigma \times (n-1,n+1)}.
		\end{align*}
	\end{lem}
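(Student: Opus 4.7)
The plan is as follows. Represent $v$ by a smooth closed 2-form $\alpha$ with compact support in $\Sigma \times (n-1, n+1)$ and $\|\alpha\|_{L^2(M_N)}$ arbitrarily close to $\|v\|_{\Sigma \times (n-1, n+1)}$. Because $M_N$ is a rational homology sphere ($0$ is not a Laplacian eigenvalue), $\alpha$ is globally exact on $M_N$, so \cref{lem:dualchar} applied to the spectral gap $\lambda_0$ produces a smooth global primitive $\beta$ with $\|\beta\|_{L^2} \leq \lambda_0^{-1/2} \|\alpha\|_{L^2}$. I would then split $\alpha$ by setting $\alpha^{\pm} := d(\chi_n^{\pm} \beta)$, and argue that these are the desired representatives of $v_N^{\pm}$.

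A direct check shows that $\alpha^{\pm}$ is supported in the slab $\Sigma \times (n-1, n+1)$: to the left of the slab $\chi_n^+$ vanishes, so $\chi_n^+ \beta$ does too; to the right $\chi_n^+ \equiv 1$, so $d(\chi_n^+ \beta) = d\beta = \alpha = 0$ there. Since $\chi_n^+ + \chi_n^- = 1$, we get $\alpha^+ + \alpha^- = d\beta = \alpha$, and hence $[\alpha^+] + [\alpha^-] = v$ in $V$. To identify $[\alpha^{\pm}]$ with $v_N^{\pm}$, I would show $[\alpha^+] \in V_N^+$ (and symmetrically $[\alpha^-] \in V_N^-$) by noting that $\chi_n^+ \beta$ is a smooth primitive of $\alpha^+$ on $M_N$ whose support lies in the region where $\chi_n^+ \neq 0$; this region is bounded away from a cross-section $\Sigma \times \{t^*\}$ lying just to the left of the support of $\chi_n^+$, with $|t^*| < N$. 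Thus $\chi_n^+ \beta$ is compactly supported in $M_N^{>t^*}$, forcing $[\alpha^+]$ to vanish in $H_c^2(M_N^{>t^*})$, which is exactly the characterization of $V_N^+$ recorded in the setup. The uniqueness of the direct sum decomposition $V = V_N^+ \oplus V_N^-$ then forces $[\alpha^{\pm}] = v_N^{\pm}$.

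It remains to bound $\|\alpha^{\pm}\|_{L^2}$. By the Leibniz rule, $\|\alpha^{\pm}\|_{L^2} \leq \|d\chi_n^{\pm}\|_{L^\infty} \|\beta\|_{L^2} + \|\alpha\|_{L^2} \lesssim \|\beta\|_{L^2} + \|\alpha\|_{L^2} \lesssim \lambda_0^{-1/2} \|\alpha\|_{L^2}$, where the first $\lesssim$ uses the uniform $L^\infty$ bound on $d\chi_n^{\pm}$ recorded just before the lemma, and the final step uses the hypothesis $\lambda_0 \lesssim 1$. Since $\alpha^{\pm}$ is an admissible competitor in the infimum defining $\|v_N^{\pm}\|_{\Sigma \times (n-1, n+1)}$, letting $\|\alpha\|_{L^2}$ approach $\|v\|_{\Sigma \times (n-1, n+1)}$ yields the desired inequality. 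The main subtlety will be confirming the cohomological identification $[\alpha^{\pm}] = v_N^{\pm}$ uniformly across all $|n| < N$, including the boundary cases where $n$ is close to $\pm N$ and $t^*$ must be chosen carefully to satisfy $|t^*| < N$; this is handled by using cutoffs whose jump regions are strictly interior to the slab.
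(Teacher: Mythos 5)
Your proposal is correct and follows essentially the same route as the paper: represent $v$ by a compactly supported closed $2$-form $\alpha$ in the slab, use the coclosed spectral gap to produce a global primitive $\beta$ with $\|\beta\|_{L^2}\lesssim \lambda_0^{-1/2}\|\alpha\|_{L^2}$, split via $\alpha^{\pm}=d(\chi_n^{\pm}\beta)$, identify $[\alpha^{\pm}]=v_N^{\pm}$ using the direct sum $V=V_N^+\oplus V_N^-$, and conclude with the Leibniz estimate. The only cosmetic difference is that you invoke \cref{lem:dualchar} to get $\beta$, whereas the paper constructs it directly as $\beta=d^*\omega$ with $\Delta\omega=\alpha$, which is the same mechanism.
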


	Here $\lambda_0$ is as in the statement of Theorem \ref{thm:torsion_analytic}. This lemma is a quantitative form of the statement that $V = V_N^+ \oplus V_N^-$, so the proof just amounts to making the subdivision argument quantitative. We will only need the $n=0$ case of the lemma, but the proof is the same for all $n$.

	\begin{proof}
		Fix $v$ and $n$. Let $\alpha$ be a 2-form representing the class
        \begin{align*}
            v \in H_c^2(\Sigma \times (n-1,n+1), \mathbb{R}),
        \end{align*}
        such that
		\begin{align*}
			\|\alpha\|_{L^2(M_N)}
			\lesssim \|v\|_{\Sigma \times (n-1,n+1)}.
		\end{align*}
		By assumption, $M_N$ has a spectral gap $\geq \lambda_0$ on coclosed 1-forms, which is equivalent to a spectral gap $\geq \lambda_0$ on closed 2-forms (by duality via the Hodge star). Since $\alpha$ is closed, there is thus a closed 2-form $\omega$ on $M_N$ with $\Delta\omega = \alpha$ and $\|\omega\|_{L^2} \leq \lambda_0^{-1} \|\alpha\|_{L^2}$. Let $\beta = d^*\omega$. Then
		\begin{align*}
			d\beta
			= dd^*\omega
			= (dd^*+d^*d)\omega
			= \alpha
		\end{align*}
		because $\omega$ is closed, and
		\begin{align*}
			\|\beta\|_{L^2}^2
			= \langle d^*\omega, d^*\omega \rangle_{L^2}
			= \langle dd^*\omega, \omega \rangle_{L^2}
			= \langle \alpha,\omega \rangle_{L^2}
			\leq \|\alpha\|_{L^2} \|\omega\|_{L^2}
			\leq \lambda_0^{-1} \|\alpha\|_{L^2}^2.
		\end{align*}
		Let $\alpha^{\pm} = d(\chi_n^{\pm} \beta)$. Since $\chi_n^+\beta$ is supported in $M_N^{>n-1}$, we have $[\alpha^+] \in V_N^+$. Similarly, $[\alpha^-] \in V_N^-$. In addition, $\alpha = \alpha^+ + \alpha^-$, so we must have $[\alpha^{\pm}] = v_N^{\pm}$. Finally, $\alpha^{\pm}$ is supported in $\Sigma \times (n-1,n+1)$, so
		\begin{align*}
			\|v_N^{\pm}\|_{\Sigma \times (n-1,n+1)}
			&\lesssim \|\alpha^{\pm}\|_{L^2(M_N)}
			\leq \|\chi_n^{\pm}\|_{L^{\infty}} \|\alpha\|_{L^2} + \|d\chi_n^{\pm}\|_{L^{\infty}} \|\beta\|_{L^2}
			\\&\lesssim \lambda_0^{-1/2} \|\alpha\|_{L^2}
			\lesssim \lambda_0^{-1/2} \|v\|_{\Sigma \times (n-1,n+1)},
		\end{align*}
		as desired.
	\end{proof}

	The next two lemmas identify contrasting behavior.
	
	\begin{lem}[Uniform lower bound for integral classes] \label{lem:integrality_bd}
		For all nonzero $v \in \Lambda$ and all $|n| < N$,
		\begin{align*}
			\|v\|_{\Sigma \times (n-1,n+1)}
			\gtrsim 1.
		\end{align*}
	\end{lem}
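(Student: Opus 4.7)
My plan is to exhibit a collection of linear functionals on $H_c^2(\Sigma \times (n-1,n+1), \mathbb{R})$ that simultaneously (a) take integer values on integral classes and (b) are uniformly controlled by the $L^2$ norm of any representative $2$-form. Combining these, at least one functional will be $\geq 1$ in absolute value on the nonzero integral class $v$, which will force any representative $\alpha$ to have $L^2$ norm $\gtrsim 1$.

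To build such functionals, I would fix once and for all an integral basis $\omega_1, \dots, \omega_{2g}$ of $H^1(\Sigma, \mathbb{Z})/\mathrm{tors}$ represented by smooth closed $1$-forms on $\Sigma$ (say harmonic with respect to $g_0$), pull them back to $\Sigma \times (n-1,n+1)$ via the projection $\pi$, and consider
\begin{align*}
    L_i(\alpha) = \int_{\Sigma \times (n-1,n+1)} \alpha \wedge \pi^* \omega_i.
\end{align*}
A short Stokes argument (using that $\alpha$ has compact support and is closed, and that $\pi^*\omega_i$ is closed) shows $L_i$ descends to $H_c^2(\Sigma \times (n-1,n+1), \mathbb{R})$. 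Under the identification $V \simeq H_c^2(\Sigma \times I, \mathbb{R})$ (which is de Rham Poincar\'e--Lefschetz duality through the deformation retract $\Sigma \times I \simeq \Sigma$), the functional $L_i$ corresponds exactly to the natural evaluation pairing of $v \in H_1(\Sigma)$ against $[\omega_i] \in H^1(\Sigma)$, hence is integer-valued on $\Lambda$.

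Now for any nonzero $v \in \Lambda$, the perfect integral pairing $H_1(\Sigma,\mathbb{Z}) \times H^1(\Sigma,\mathbb{Z}) \to \mathbb{Z}$ forces $L_i(v) \in \mathbb{Z}$ to be nonzero for at least one $i$, so $|L_i(v)| \geq 1$. For any representative $\alpha$ as in the definition of $\|v\|_{\Sigma \times (n-1,n+1)}$, Cauchy--Schwarz then gives
\begin{align*}
    1 \leq |L_i(v)| = |L_i(\alpha)| \leq \|\alpha\|_{L^2(\Sigma \times (n-1,n+1))} \cdot \|\pi^*\omega_i\|_{L^2(\Sigma \times (n-1,n+1))},
\end{align*}
where the norms are taken in the product metric $g_n + dt^2$. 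Since $\pi^*\omega_i$ is $t$-independent, $\|\pi^*\omega_i\|_{L^2}^2 = 2 \|\omega_i\|_{L^2(\Sigma,g_n)}^2$, and hypothesis (ii) makes $(\Sigma, g_n)$ uniformly bilipschitz to $(\Sigma, g_0)$, bounding $\|\omega_i\|_{L^2(\Sigma,g_n)} \lesssim \|\omega_i\|_{L^2(\Sigma, g_0)} \lesssim 1$ uniformly in $n, N$. Rearranging and taking the infimum over $\alpha$ yields $\|v\|_{\Sigma \times (n-1,n+1)} \gtrsim 1$.

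The only substantive point is justifying that $L_i$, as defined by integration of a wedge product on the slab, really computes the integral pairing between $v$ and $[\omega_i]$ under the author's identification $V \simeq H_c^2(\Sigma \times I, \mathbb{R})$; once that identification is made explicit via Poincar\'e--Lefschetz duality on the product (so that integrality of $v \in \Lambda$ matches integrality of the corresponding compactly supported class under the Thom isomorphism), everything else is a routine Cauchy--Schwarz plus the uniform geometric hypothesis (ii). I do not anticipate any further obstacle.
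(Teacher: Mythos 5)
Your overall strategy (pair the class against integral $1$-forms and use Cauchy--Schwarz as a quantitative form of Poincar\'e duality) is sound, and the claims that $L_i$ is well defined on $H_c^2(\Sigma \times (n-1,n+1),\mathbb{R})$ and integer-valued on $\Lambda$ are fine. The gap is in the very last step: hypothesis (ii) does \emph{not} give $\|\omega_i\|_{L^2(\Sigma,g_n)} \lesssim \|\omega_i\|_{L^2(\Sigma,g_0)}$ for a \emph{fixed} basis $\omega_1,\dots,\omega_{2g}$. Condition (ii) only says that $(\Sigma,g_n)$ and $(\Sigma,g_0)$ are bilipschitz via \emph{some} diffeomorphism $\phi_n$, i.e.\ $\phi_n^*g_n$ is commensurate to $g_0$; it does not say $g_n$ is commensurate to $g_0$ via the identity map, and the $\phi_n$ are allowed to represent huge mapping classes. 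In the intended application the $g_n$ move along a long Teichm\"uller geodesic (the slabs differ by powers of a pseudo-Anosov), so for a fixed nonzero integral class $[\omega_i]$ the quantity $\|\omega_i\|_{L^2(\Sigma,g_n)}$ is bounded below by the $g_n$-harmonic norm of $[\omega_i]$, which typically grows exponentially in $|n|$; this exponential drift of a fixed cohomology class across the slabs is precisely the mechanism behind Lemma \ref{lem:V+-_exp_decay} and the torsion growth, so it cannot be wished away. Concretely, if $g_n=(\psi^n)^*g_0$ for a pseudo-Anosov $\psi$ (which satisfies (ii) with $\phi_n=\psi^{-n}$), then $\|\omega_i\|_{L^2(\Sigma,g_n)}=\|(\psi^{-n})^*\omega_i\|_{L^2(\Sigma,g_0)}\to\infty$ exponentially. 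Your Cauchy--Schwarz step then only yields $\|v\|_{\Sigma\times(n-1,n+1)}\gtrsim \|\pi^*\omega_i\|_{L^2}^{-1}$, a lower bound that degenerates exponentially in $n$ rather than the required uniform one.

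The repair is to let the dual basis move with the slab: replace $\omega_i$ by $\eta_i^{(n)}:=(\phi_n^{-1})^*\omega_i$, which is still an integral basis of $H^1(\Sigma,\mathbb{Z})$ (as $\phi_n^*$ is a lattice automorphism) and now has $\|\eta_i^{(n)}\|_{L^2(\Sigma,g_n)}\sim\|\omega_i\|_{L^2(\Sigma,g_0)}\lesssim 1$ uniformly; the perfect pairing argument then goes through verbatim. Once you do this, your proof is essentially the paper's: the paper transports the class itself, using $\phi_n$ to get $\|v\|_{\Sigma\times(n-1,n+1)}\sim\|\phi_n^*v\|_{\Sigma\times(-1,1)}$, observes that $\phi_n^*v$ is again a nonzero lattice vector, and bounds below by $\min_{0\neq w\in\Lambda}\|w\|_{\Sigma\times(-1,1)}>0$, which depends only on $g_0$. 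Both routes hinge on the same point your draft misses: the uniform geometry of (ii) is only available after changing the marking by $\phi_n$, and integrality must be invoked for the $\phi_n$-transported object, not for a fixed one.
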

	
	\begin{lem}[Exponential decay for classes in $V_N^{\pm}$] \label{lem:V+-_exp_decay}
		There exists $\delta \gtrsim \lambda_0$ such that for all $v_N^{\pm} \in V_N^{\pm}$,
		\begin{align*}
			\|v_N^+\|_{\Sigma \times (N-2,N)}
			\lesssim \lambda_0^{-1/2} (1-\delta)^N \|v_N^+\|_{\Sigma \times (-1,1)}
		\end{align*}
        and
        \begin{align*}
            \|v_N^-\|_{\Sigma \times (-N,-N+2)}
			\lesssim \lambda_0^{-1/2} (1-\delta)^N \|v_N^-\|_{\Sigma \times (-1,1)}.
        \end{align*}
	\end{lem}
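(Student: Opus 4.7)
I plan to prove the bound for $v_N^+$; the bound for $v_N^-$ is symmetric. Write $g(m) := \|v_N^+\|_{\Sigma \times (m-1,m+1)}$, so the goal is $g(N-1) \lesssim \lambda_0^{-1/2}(1-\delta)^N g(0)$ for some $\delta \gtrsim \lambda_0$. The strategy combines a localized primitive lemma with an iteration.

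First I establish a localized primitive: for each $0 \leq m \leq N-1$ and a near-minimizing smooth representative $\alpha$ of $v_N^+$ supported in $\Sigma \times (m-1,m+1)$, there exists $\gamma \in L^2(M_N, \wedge^1 T^*M_N)$ supported in $\overline{M_N^{>m-1}}$ with $d\gamma = \alpha$ on the interior of $M_N^{>m-1}$ and $\|\gamma\|_{L^2(M_N)} \lesssim \lambda_0^{-1/2} g(m)$. I will build $\gamma$ by duality: \cref{lem:dualchar} produces a global primitive $\beta$ of $\alpha$ with $\|\beta\|_{L^2(M_N)} \leq \lambda_0^{-1/2}\|\alpha\|_{L^2}$, and I then define $\ell_0 \colon d^*\Omega_c^2(M_N^{>m-1}) \to \mathbb{R}$ by $\ell_0(d^*\varphi) := \langle \varphi, \alpha\rangle$. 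Integration by parts on the closed manifold $M_N$ rewrites this as $\langle d^*\varphi, \beta\rangle$, simultaneously verifying well-definedness (the case $d^*\varphi = 0$) and giving the bound $|\ell_0(d^*\varphi)| \leq \lambda_0^{-1/2}\|\alpha\|_{L^2}\|d^*\varphi\|_{L^2}$. Hahn--Banach and Riesz representation then deliver $\gamma$, extended by zero to $M_N$.

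With $\gamma$ in hand, the next step produces bounds on $g(n)$ for all $n \in [m+1, N-1]$ at once. For each such $n$ pick a smooth cutoff $\chi_n$ on $M_N$ with $\chi_n = 0$ on $M_N^{<n}$, $\chi_n = 1$ on $M_N^{>n+1}$, and $\|d\chi_n\|_\infty$ uniformly bounded, and set $\alpha_n := d(\chi_n\gamma) = d\chi_n \wedge \gamma + \chi_n \alpha$. Since $\alpha$ is supported in $M_N^{<n}$ for $n > m$, the second term vanishes, so $\alpha_n = d\chi_n \wedge \gamma$ is $L^2$, supported in $\Sigma \times (n,n+1)$, and represents $v_N^+$ in $H_c^2(\Sigma \times (n-1,n+1))$ (since $\alpha - \alpha_n = d((1-\chi_n)\gamma)$, with primitive compactly supported in $\Sigma \times (m-1,n+1)$). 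Therefore $g(n)^2 \leq \|\alpha_n\|_{L^2}^2 \lesssim \|\gamma\|_{L^2(\Sigma \times (n,n+1))}^2$, and summing over $n$ delivers the sum bound
\[
\sum_{n=m+1}^{N-1} g(n)^2 \leq C \lambda_0^{-1} g(m)^2
\]
for a uniform constant $C$.

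Finally I iterate. Let $K := \lceil 2C\lambda_0^{-1}\rceil$. Whenever $m + K \leq N-1$, the sum bound forces some $n^* \in [m+1, m+K]$ to satisfy $g(n^*)^2 \leq \tfrac{1}{2}g(m)^2$. Starting from $m_0 := 0$ and choosing $m_{i+1}$ to be such a minimizer in $[m_i+1, m_i+K]$ yields $g(m_i)^2 \leq 2^{-i}g(0)^2$ with $m_i \leq iK$; running this until $m_I > N-1-K$ forces $I \gtrsim N/K$. Applying the sum bound once more at $m_I$ gives $g(N-1)^2 \lesssim \lambda_0^{-1}\cdot 2^{-N/K} g(0)^2 \lesssim \lambda_0^{-1}(1-\delta)^N g(0)^2$ with $\delta \sim 1/K \sim \lambda_0$; taking square roots and slightly shrinking $\delta$ produces the desired inequality. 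I expect the main technical obstacle to lie in the localized primitive step, since the $L^2$-form $\gamma$ produced by Hahn--Banach need not satisfy $d\gamma = \alpha$ distributionally across the artificial boundary $\Sigma \times \{m-1\}$; however, this will be sidestepped because the cutoffs $\chi_n$ are supported in $M_N^{>m} \subset M_N^{>m-1}$ (strictly inside), so only the interior weak equation is used.
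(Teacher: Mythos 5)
Your overall architecture is the same as the paper's (spectral-gap primitive, cutoffs pushed to the right, summing local norms, then an iteration converting the summation inequality into exponential decay; your ``find an index in each window where the mass halves'' endgame is a harmless variant of the paper's elementary lemma). But there is a genuine gap in your Step 2, and it is caused precisely by the localization you introduce in Step 1. The Hahn--Banach/Riesz construction gives a $\gamma \in L^2$ on the open set $M_N^{>m-1}$ with $d\gamma = \alpha$ only \emph{weakly in that open set}; nothing prevents $\supp\gamma$ from reaching the cut $\Sigma\times\{m-1\}$, so the zero-extension of $\gamma$ to $M_N$ generically satisfies $d\gamma = \alpha + T$ with $T$ a nonzero current supported on $\Sigma\times\{m-1\}$. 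Consequently the identity $\alpha - \alpha_n = d\bigl((1-\chi_n)\gamma\bigr)$ is unjustified, and $(1-\chi_n)\gamma$ is supported only in $\Sigma\times[m-1,n+1]$, not compactly in the open product region --- so the cohomological identification $[\alpha_n] = v_N^+$ does not follow. Worse, it cannot follow from the properties of $\gamma$ you actually use: if $\tilde\theta$ is a closed $1$-form on $\overline{M_N^{>m-1}}$ whose restriction to $\Sigma$ is a nontrivial class (such forms exist, since $H^1(M_N^{>m-1};\R)\cong\R^g$ injects into $H^1(\Sigma;\R)$), then $\gamma' = \gamma + \epsilon\,\mathbf{1}_{M_N^{>m-1}}\tilde\theta$ has the same support, interior weak equation, and (for small $\epsilon$) the same norm bound, yet $[d\chi_n\wedge\gamma']$ differs from $[d\chi_n\wedge\gamma]$ by a nonzero multiple of the suspension class of $[\tilde\theta|_\Sigma]$ in $V$. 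So at most one of these admissible choices can represent $v_N^+$, and your argument gives no way to know you picked it. (A secondary point: your $\gamma$ is only $L^2$, so $d\chi_n\wedge\gamma$ is not an admissible smooth competitor in the infimum defining $\|\cdot\|_{\Sigma\times(n-1,n+1)}$ without a mollification step.)

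The repair is to drop the localization entirely, which is what the paper does: keep the global smooth primitive $\beta = d^*\omega$ with $\|\beta\|_{L^2}\le\lambda_0^{-1/2}\|\alpha\|_{L^2}$ (smooth by elliptic regularity, so the admissibility issue disappears), and for $n>m$ set $\alpha_n = d(\chi_n\beta) = d\chi_n\wedge\beta$. Then $\chi_n\beta$ is compactly supported in $M_N^{>n-1}$, so $[\alpha_n]\in V_N^+$, while $(1-\chi_n)\beta$ is compactly supported in $M_N^{<n+2}$ and $d\bigl((1-\chi_n)\beta\bigr) = \alpha - \alpha_n$ holds globally (no boundary current, since $d\beta=\alpha$ on all of $M_N$), so $[\alpha]-[\alpha_n]\in V_N^-$; because $V = V_N^+\oplus V_N^-$ and $[\alpha]=v_N^+\in V_N^+$, you conclude $[\alpha_n]=v_N^+$. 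With that substitution your summation bound $\sum_{n=m+1}^{N-1} g(n)^2 \lesssim \lambda_0^{-1} g(m)^2$ and your iteration go through and yield the stated estimate with $\delta\gtrsim\lambda_0$.
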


	Before proving these lemmas, let us see how we can exploit the tension between them to deduce Theorem~\ref{thm:torsion_analytic}.
	
	\begin{proof}[Proof of Theorem \ref{thm:torsion_analytic} assuming Lemmas \ref{lem:integrality_bd} and \ref{lem:V+-_exp_decay}]
		Let $v \in \Lambda_N^+ + \Lambda_N^-$ be nonzero. In view of \eqref{eqn:torsion_lower_bd}, we wish to show that $v$ is large. Since $v \neq 0$, one of $v_N^{\pm}$ is nonzero; assume $v_N^+ \neq 0$ without loss of generality. Then for some $\delta \gtrsim \lambda_0$,
		\begin{align*}
			1 \lesssim \|v_N^+\|_{\Sigma \times (N-2,N)}
			\lesssim \lambda_0^{-1/2} (1-\delta)^N \|v_N^+\|_{\Sigma \times (-1,1)}
			\lesssim \lambda_0^{-1} (1-\delta)^N \|v\|_{\Sigma \times (-1,1)},
		\end{align*}
		where these three inequalities, in order, are by Lemma \ref{lem:integrality_bd}, Lemma \ref{lem:V+-_exp_decay}, and Lemma \ref{lem:transverse}. Consequently
		\begin{align*}
			\|v\|_{\Sigma \times (-1,1)}
			\gtrsim \lambda_0 (1+\delta)^N.
		\end{align*}
		Fix a norm $\|\cdot\|$ on $V$ which makes $\Lambda \subseteq V$ isometric to the standard lattice $\mathbb{Z}^{2g} \subseteq \mathbb{R}^{2g}$. Since $\|\cdot\|_{\Sigma \times (-1,1)}$ depends only on the metric $g_0$ on $\Sigma$ in (i), and we allow our implied constants to depend both on $g_0$ and on the genus $g$, we have $\|\cdot\|_{\Sigma \times (-1,1)} \sim \|\cdot\|$. Therefore
		\begin{align*}
			\|v\| \gtrsim \lambda_0(1+\delta)^N.
		\end{align*}
		It follows that there exists $R \gtrsim \lambda_0 (1+\delta)^N$, such that if $B_R$ denotes the ball of radius $R$ around the origin in $(V,\|\cdot\|)$, then
		\begin{align} \label{eqn:inject_ball}
			\Lambda \cap B_R
			\hookrightarrow \frac{\Lambda}{\Lambda_N^+ + \Lambda_N^-}.
		\end{align}
		The set $\Lambda \cap B_R$ has size
		\begin{align*}
			\#(\Lambda \cap B_R)
			\gtrsim R^{2g}
			\gtrsim \lambda_0^{2g} (1+\delta)^{2gN}.
		\end{align*}
		Equivalently (up to multiplying $\delta$ by a constant $\sim 1$),
		\begin{align} \label{eqn:ball_size}
			\#(\Lambda \cap B_R) \gtrsim \lambda_0^{2g} (1+\delta)^N.
		\end{align}
		Combining \eqref{eqn:torsion_lower_bd}, \eqref{eqn:inject_ball}, and \eqref{eqn:ball_size} yields the result.
	\end{proof}

	It remains to prove Lemmas \ref{lem:integrality_bd} and \ref{lem:V+-_exp_decay}.
	
	\begin{proof}[Proof of Lemma \ref{lem:integrality_bd}]
		By (ii), there are bilipschitz homeomorphisms $\phi_n \colon \Sigma \to \Sigma$ such that the metrics $\phi_n^*g_n$ are uniformly commensurate. Composing all the $\phi_n$ with $\phi_0^{-1}$, we may assume $\phi_0 = \id$. Then
		\begin{align*}
			\|v\|_{\Sigma \times (n-1,n+1)}
			\sim \|\phi_n^*v\|_{\Sigma \times (-1,1)}.
		\end{align*}
		Since $v$ is nonzero and integral, so is $\phi_n^*v$. Therefore
		\begin{align*}
			\|v\|_{\Sigma \times (n-1,n+1)}
			\gtrsim \min_{0 \neq w \in \Lambda} \|w\|_{\Sigma \times (-1,1)}. 
		\end{align*}
		The right hand side is a positive number depending only on $g_0$, so it is $\gtrsim 1$ because we allow implied constants to depend on $g_0$.
	\end{proof}

	\begin{proof}[Proof of Lemma \ref{lem:V+-_exp_decay}]
		The argument is similar to the proof of Lemma \ref{lem:transverse}, but a little more complicated. We only prove the desired bound for $v_N^+$, as the bound for $v_N^-$ is completely symmetric.
		
		Fix $0 \leq m < N-1$, and let $\alpha$ be a 2-form representing the class
        \begin{align*}
            v_N^+ \in H_c^2(\Sigma \times (m-1,m+1), \mathbb{R}),
        \end{align*}
        such that
		\begin{align*}
			\|\alpha\|_{L^2(M_N)}
			\lesssim \|v_N^+\|_{\Sigma \times (m-1,m+1)}.
		\end{align*}
		By the same method as in the proof of Lemma \ref{lem:transverse}, we can find a primitive $\beta$ of $\alpha$ with $\|\beta\|_{L^2} \leq \lambda_0^{-1/2} \|\alpha\|_{L^2}$. For $m < n < N$, let $\alpha_n^+ = d(\chi_n^+\beta)$. Then again by the same reasoning as in Lemma \ref{lem:transverse}, we have $[\alpha_n^+] = v_N^+$. The construction of $\chi_n^+$ depended on a choice of smooth function $\chi^+$ on $\mathbb{R}$; we may assume $\chi^+$ is supported in $(\frac{1}{2},\infty)$. Then since $n>m$, the supports of $\chi_n^+$ and $\alpha$ are disjoint, so $\alpha_n^+ = d\chi_n^+ \wedge \beta$ is supported in $\Sigma \times (n-1,n+1)$, and
		\begin{align*}
			\|v_N^+\|_{\Sigma \times (n-1,n+1)}
			\lesssim \|\alpha_n^+\|_{L^2}
			\lesssim \|\beta\|_{L^2(\Sigma \times (n-1,n+1))}.
		\end{align*}
		Squaring and summing in $n$,
		\begin{align*}
			\sum_{n=m+1}^{N-1} \|v_N^+\|_{\Sigma \times (n-1,n+1)}^2
			\lesssim \|\beta\|_{L^2(M_N)}^2
			\leq \lambda_0^{-1} \|\alpha\|_{L^2}^2
			\lesssim \lambda_0^{-1} \|v_N^+\|_{\Sigma \times (m-1,m+1)}^2.
		\end{align*}
		The desired estimate now follows from the elementary lemma below.
	\end{proof}

	\begin{lem}
		Let $C > 0$, and let $a_0,\dots,a_N$ be a sequence of positive real numbers such that
		\begin{align*}
			\sum_{n=m+1}^{N} a_n \leq Ca_m
		\end{align*}
		for all $m < N$. Then
		\begin{align*}
			a_N \leq \frac{C}{(1+C^{-1})^{N-1}} a_0.
		\end{align*}
	\end{lem}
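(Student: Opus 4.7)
The plan is to introduce the tail sums $S_m := \sum_{n=m}^N a_n$ and observe that the hypothesis reformulates very cleanly in terms of them. Indeed, the assumption $\sum_{n=m+1}^N a_n \leq C a_m$ says exactly that $S_m - a_m \leq C a_m$, i.e.
\begin{equation*}
    a_m \geq \frac{1}{1+C}\, S_m.
\end{equation*}
Since $S_{m+1} = S_m - a_m$, this gives the geometric decay
\begin{equation*}
    S_{m+1} \leq \frac{C}{1+C}\, S_m = \frac{1}{1+C^{-1}}\, S_m
\end{equation*}
for every $0 \leq m < N$.

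From here the proof is just iteration. Applying the inequality $N-1$ times starting at $m=1$ yields
\begin{equation*}
    S_N \leq \Bigl(\frac{1}{1+C^{-1}}\Bigr)^{N-1} S_1,
\end{equation*}
and the original hypothesis at $m=0$ gives $S_1 = \sum_{n=1}^N a_n \leq C a_0$. Since $S_N = a_N$ (only one term left), combining the two displays produces
\begin{equation*}
    a_N \leq \frac{C}{(1+C^{-1})^{N-1}}\, a_0,
\end{equation*}
which is exactly the claimed bound. There is no real obstacle here; the only thing to be careful about is the off-by-one bookkeeping in the exponent, which is handled by starting the geometric iteration from $S_1$ (bounded by the hypothesis) rather than from $S_0$.
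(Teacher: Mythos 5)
Your proof is correct: the reformulation $S_{m+1} \leq \frac{1}{1+C^{-1}} S_m$ for the tail sums $S_m = \sum_{n=m}^N a_n$, iterated from $S_1 \leq C a_0$ down to $S_N = a_N$, gives exactly the stated bound with the right exponent. The paper's proof is the single word ``Induction,'' and your tail-sum iteration is precisely that induction carried out explicitly, so the two arguments are essentially the same.
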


	\begin{proof}
		Induction.
	\end{proof}

\subsection{Proof of Corollary~\ref{cor:21}}
Recall that the \emph{rank} of a finitely generated group is the minimal number of generators in a generating set. The following proposition is proven in \cite{bs11}.
\begin{prop}[Proposition 6.1 \cite{bs11}]\label{prop:bslemma}
Assume that $\{M_i\}$ is a sequence of pairwise distinct hyperbolic 3-manifolds with $\inj(M_i) \geq \varepsilon$ and $\rank(\pi_1(M_i)) \leq k$. Then there are points $x_i \in M_i$ such that, up to passing to a subsequence, the pointed manifolds $(M_i, x_i)$ converge in the pointed Gromov--Hausdorff topology to a pointed hyperbolic 3-manifold $(M_\infty,x_\infty)$ homeomorphic to $\Sigma \times \R$ that has two degenerate ends. Here, $\Sigma$ is a closed, orientable surface with genus at most $k$.
\end{prop}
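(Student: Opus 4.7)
The strategy is a double-limit construction: first extract a pointed geometric limit $N$ of the $M_i$ that is tame, noncompact, and has a simply degenerate end; then push basepoints deep into that end to extract a second geometric limit $M_\infty$ that is doubly degenerate and homeomorphic to $\Sigma \times \R$.

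\textbf{First limit.} Pick basepoints $y_i \in M_i$ arbitrarily. The uniform lower bound on injectivity radius together with \cref{geometriccompactness} gives, after passing to a subsequence, geometric convergence $(M_i, y_i) \to (N, y)$. Rank of $\pi_1$ is lower semicontinuous under geometric convergence (a generating set lifts to an eventually defined generating set in the limit), so $\rank(\pi_1(N)) \leq k$; by the tameness theorem of Agol and Calegari--Gabai \cite{agol2004tameness, cgtameness}, $N$ is topologically tame. The bound $\inj \geq \varepsilon$ is inherited by $N$, so no cusps can appear in the limit. Distinctness of the $M_i$ together with the fact that closed hyperbolic 3-manifolds with $\inj \geq \varepsilon$ and $\diam \leq D$ form a finite set up to isometry (Cheeger--Gromov compactness plus Mostow rigidity) forces $\diam(M_i) \to \infty$, so $N$ is noncompact. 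One then argues that $N$ has at least one simply degenerate end: if every end were geometrically finite, $N$ would be convex cocompact, the rank bound would restrict the topology of $\partial N$ to a finite list, and Thurston's hyperbolic Dehn surgery theorem together with $\inj \geq \varepsilon$ would restrict possible closed fillings $M_i$ to a finite set, contradicting distinctness.

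\textbf{Second limit.} Fix a simply degenerate end $\mathcal{E} \cong \Sigma \times [0, \infty)$ of $N$. Such an end is swept out by a sequence of pleated surfaces homotopic to $\Sigma$ whose area is bounded in terms of $\genus(\Sigma)$; combined with the Margulis lemma, this produces points $z_n \in \mathcal{E}$ escaping to infinity in $N$ with $\inj(z_n)$ uniformly bounded above and below. Transport the $z_n$ back to points $x_{i_n} \in M_{i_n}$ via the geometric convergence of the first step, and diagonalize to extract, after a further subsequence, geometric convergence $(M_i, x_i) \to (M_\infty, x_\infty)$. By construction, moving away from $x_\infty$ in one direction reproduces the deeper portion of $\mathcal{E}$; since $z_n$ is pushed arbitrarily far from the compact core of $N$, the opposite direction from $x_\infty$ likewise consists of level surfaces homotopic to $\Sigma$, producing a second simply degenerate end. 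Hence $M_\infty$ is a tame hyperbolic 3-manifold with two simply degenerate ends and no cusps, and the standard topological classification forces $M_\infty \cong \Sigma \times \R$. Finally, $2 \genus(\Sigma) = \rank(\pi_1(\Sigma \times \R)) = \rank(\pi_1(M_\infty)) \leq k$, so $\genus(\Sigma) \leq k/2 \leq k$.

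\textbf{Main obstacle.} The subtlest part is excluding the convex cocompact case for $N$: this requires combining the bounded-rank hypothesis with deformation theory for geometrically finite Kleinian groups to rule out an infinite family of distinct closed fillings of a fixed convex cocompact $N$ with uniform injectivity radius. Once a degenerate end is secured, constructing the second limit and identifying $M_\infty$ as $\Sigma \times \R$ are relatively standard consequences of pleated surface technology and the structure theory of simply degenerate ends.
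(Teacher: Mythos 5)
The statement you are proving is not proved in the paper at all: it is quoted verbatim from Biringer--Souto \cite{bs11} (their Proposition 6.1), whose argument runs through their structural decomposition of closed hyperbolic $3$-manifolds with $\rank(\pi_1)\leq k$ and $\inj\geq\varepsilon$ into a bounded number of bounded-diameter building blocks glued along product regions $\Sigma\times[0,1]$ of bounded genus; pairwise distinctness forces some product region to be arbitrarily long, and basepoints chosen in the middle of such a region yield the doubly degenerate limit $\Sigma\times\R$. Your proposal avoids this machinery, but the places where you replace it are exactly where genuine gaps appear. The most serious is your first step: ``rank of $\pi_1$ is lower semicontinuous under geometric convergence'' is not true in general and is not justified by the reason you give --- generators of $\pi_1(M_i,y_i)$ are based loops of a priori enormous length which need not persist in, or converge to anything in, the geometric limit; geometric limits are well known to be larger than algebraic limits and can even have infinitely generated fundamental group. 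Consequently your appeal to Agol/Calegari--Gabai tameness for $N$ (and later the genus bound $2\genus(\Sigma)=\rank(\pi_1(M_\infty))\leq k$) has no foundation. Indeed, the paper itself remarks that tameness of such geometric limits is obtained by combining geometric compactness with the Biringer--Souto decomposition \cite[Theorem 14.4]{bs23}, i.e.\ precisely the bounded-rank machinery you are trying to bypass.

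Two further steps would not survive scrutiny as written. First, your exclusion of the case that all ends of $N$ are geometrically finite invokes ``Thurston's hyperbolic Dehn surgery theorem'' to bound the possible closed manifolds $M_i$, but the $M_i$ are not Dehn fillings of $N$ in any sense: geometric convergence only says that larger and larger pieces of $M_i$ are nearly isometric to pieces of $N$, and it gives no canonical surgery description of the complementary parts, so no finiteness follows. Ruling out flaring ends in the limit requires an actual argument (in \cite{bs11} it is again a byproduct of the product-region structure). Second, in the double-limit step the claim that the manifold $M_\infty$ obtained by re-basing at points $z_n$ escaping the degenerate end is homeomorphic to $\Sigma\times\R$ with \emph{both} ends degenerate is asserted rather than proved: to control the topology and both ends of this second limit one needs the uniform product structure of a bounded-geometry degenerate end (e.g.\ Minsky's bilipschitz model, as the paper uses in Proposition~\ref{prop:geometricpart}), not merely the existence of pleated surfaces giving two-sided injectivity radius bounds at the $z_n$. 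So the overall skeleton (extract a limit, locate a degenerate end, re-base deep in the end) is reasonable, but the three load-bearing claims --- bounded rank/tameness of the geometric limit, nonexistence of geometrically finite ends, and the identification of the second limit as a doubly degenerate $\Sigma\times\R$ of genus at most $k$ --- are exactly the content of the Biringer--Souto theorem and are not established by your argument.
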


\begin{proof}[Proof of \cref{cor:21}]
Suppose there are infinitely many pairwise distinct hyperbolic manifolds $\{M_i\}$ with a uniform lower bound for the spectral gap for coexact 1-forms and a uniform upper bound for $\rank(\pi_1(M_i))$. The uniform lower bound for the spectral gap implies a uniform lower bound for the injectivity radius. \cref{prop:bslemma} then applies and yields a pointed subsequence $\{M_i, x_i\}$ converging in the pointed Gromov--Hausdorff topology to a pointed hyperbolic 3-manifold $(M_\infty,x_\infty)$ homeomorphic to $\Sigma \times \R$. This doubly degenerate manifold also has a uniform lower bound on its injectivity radius.  For any $N>0$, let $i_N$ be large enough that the radius $N$ ball around $x_{i_N}$ in $M_{i_N}$ is at Gromov--Hausdorff distance $\ll \varepsilon$ from the corresponding ball in $M_{\infty}$. Then the subsequence $M_{i_N}$ satisfies the hypotheses of \cref{thm:torsion_analytic}. We conclude that $|H_1(M_{i_N})|$ grows without bound.
\end{proof}

 \subsection{Proof of Corollary~\ref{cor:22}} We now explain why \cref{thm:torsion_analytic} implies that the torsion homology of a sequence of hyperbolic rational homology 3-spheres $M_n$ with a coexact 1-form spectral gap grows exponentially in volume, assuming that there is a uniform bound on the rank, or number of generators, of $\pi_1(M_n)$.

A consequence of the main theorem of \cite[Theorem 14.4]{bs23} can be  stated as follows. 
 A closed hyperbolic 3-manifold of injectivity radius at least $\epsilon$ and rank at most $k$ (we denote the collection of all such manifolds by $\mathcal{M}(k,\epsilon)$) can be decomposed as a union of building block manifolds glued together along product regions homeomorphic to $\Sigma \times [0,1]$. The number of terms in this decomposition is at most $n(k)$, and each building block manifold has diameter and volume at most $B(k,\epsilon)$ and is homeomorphic to a member of a finite collection of compact manifolds with boundary depending only on $k$ and $\epsilon$. Moreover, they produce uniformly bilipschitz models for every element of $\mathcal{M}(k,\epsilon)$. The bilipschitz models here are exactly as in Section \ref{sec.gluing}.)
 
 It follows that there are a uniformly bounded number of product regions in any element of $\mathcal{M}(k,\epsilon)$, and, using the uniformly bilipschitz models, that for some $\delta=\delta(k,\epsilon)>0$ one of the product regions has volume at least $\delta$ times the volume of the ambient space.  Since the volume of this product region is proportional to the $N$ in the proof of Theorem \ref{thm:2}, we can run the same argument to show that the torsion homology grows exponentially in volume.

 \begin{section}{Questions and remarks}\label{sec.q}
    \begin{ques}\label{q.sar}\footnote{asked by Peter Sarnak}
Does there exist a sequence of hyperbolic rational homology spheres with a uniform spectral gap for both the Laplacian on (mean zero) functions and for the Laplacian on coexact 1-forms? 
 \end{ques}
Equivalently, the question asks if one can find an infinite family of high dimensional spectral expanders that are hyperbolic 3-manifolds. The question is already interesting when the condition of hyperbolicity is relaxed. The last author gives an example of a sequence of Riemannian 3-manifolds with bounded geometry having uniform spectral gaps for functions and 1-forms in \cite{jz}. The paper \cite{bs11} suggests that for a sequence of rational homology spheres to have a function spectral gap, the rank of the manifolds in the sequence must be going to infinity.    
     \begin{ques}
         Does there exist a sequence of \underline{arithmetic} hyperbolic rational homology 3-spheres with volume going to infinity and uniform spectral gap on coexact 1-forms?
     \end{ques}
    The study of the spectrum of the Laplacian on congruence manifolds has a long history beginning with the work of A. Selberg.
	Bergeron--Clozel \cite{bergeron_clozel} in particular studied the question of whether a uniform lower bound for the spectral gap of the Laplacian acting on $i$-forms exists for congruence subgroups of a connected algebraic group via representation theoretic methods giving information about the automorphic spectrum.
	In our case, since $SO(3, 1)$ does not have a discrete series, such a uniform lower bound for congruence manifolds (or for a sequence of congruence manifolds) cannot exist and the construction of a sequence of arithmetic manifolds satisfying the above properties requires considering a different class of arithmetic manifolds.
 
     One reason to expect the existence of such a sequence is the analogy between the spectra of coexact 1-forms on hyperbolic 3-manifolds and the spinor spectra of hyperbolic surfaces where such a sequence of arithmetic surfaces is known to exist by forthcoming work of the second and third authors. Such a sequence must have infinitely many 3-manifolds in the same commensurability class as otherwise the sequence will Benjamini--Schramm converge to $\mathbb{H}^3$ due to the results of Fraczyk and Raimbault (see Theorem A of \cite{fr19}). 

    \begin{ques}
        If a sequence of hyperbolic rational homology spheres has uniform spectral gap for coexact 1-forms and Benjamini--Schramm converges to a tame manifold (or a distribution on tame manifolds), must $|H_1(M_i)_{\textnormal{tors}}|$ grow exponentially in $\textnormal{Vol}(M_i)$?
    \end{ques}
    This is a strengthening of \cref{cor:22}. Although the Benjamini--Schramm limit is tame, the manifolds in the sequence may have fundamental groups of unbounded rank. (See \cite{bsdefinition} for a definition of Benjamini--Schramm convergence.)  

\begin{ques}
    Recall the notation from \cref{sec:bassnote}. For $i=1,2,3$, does $\textnormal{Bass}_{\mathcal{F}(\mathcal{M}_i)}$ contain an interval? What is the largest point in $\textnormal{Bass}_{\mathcal{F}(\mathcal{M}_i)}$? What about the largest limit point?

\end{ques}
There are already a few facts about the bass note spectra that we can glean from our arguments. Indeed, in all our constructions, one can add a large Dehn surgery to one of the handlebody building blocks while keeping it hyperbolic and a rational or integer homology sphere. This guarantees that our manifolds have an arbitrarily small upper bound on their injectivity radius. This implies, by Proposition 10 of~\cite{lott} or \cite{mph90}, that the 1-form spectral gap is bounded above by an arbitrarily small constant $c$. Thus we see that the non-rigid bass note spectra $\textnormal{Bass}_{\mathcal{F}(\mathcal{M}_1)}$, $\textnormal{Bass}_{\mathcal{F}(\mathcal{M}_2)}$, and $\textnormal{Bass}_{\mathcal{F}(\mathcal{M}_3)}$ must contain \emph{infinitely} many limit points.

 \end{section}

\printbibliography
\vspace{1cm}
\end{document}